\theoremstyle{theorem}
\newtheorem{theorem}{Theorem}[subsection]
\newtheorem{proposition}[theorem]{Proposition}
\newtheorem{lemma}[theorem]{Lemma}
\newtheorem{corollary}[theorem]{Corollary}
\newtheorem*{theoremA}{Main theorem}
\newtheorem*{corollaryA}{Corollary}
\theoremstyle{definition}
\newtheorem{definition}[theorem]{Definition}
\newtheorem{remark}[theorem]{Remark}
\newtheorem{example}[theorem]{Example}
\newcommand{\B}{\mathcal{B}}
\newcommand{\lay}{\mathcal{L}}
\newcommand{\dlog}{\operatorname{dlog}}
\newcommand{\ch}[1]{{#1}^{\mathbb C}}
\newcommand{\C}{\mathbb{C}}
\newcommand{\Z}{\mathbb{Z}}
\newcommand{\Q}{\mathbb{Q}}
\newcommand{\F}{\mathbb{F}}
\newcommand{\K}{\mathbb{K}}
\newcommand{\G}{\mathbb{G}}
\renewcommand\Z{\mathbb Z}
\newcommand{\Hom}{\operatorname{Hom}}
\newcommand{\Conf}{\operatorname{Conf}}
\newcommand{\bul}{\bullet}
\newcommand{\Span}{\operatorname{Span}}
\newcommand{\rk}{\operatorname{rk}}
\newcommand{\mcg}[1]{\operatorname{PMCG}(S^2,#1)}
\newcommand{\tbraid}[1]{
{\bf Br}_{#1}
}
\newcommand{\pbraid}[1]{\operatorname{PB}_{#1}}
\newcommand{\ess}[1]{
#1^{\textrm{e}}
}
\newcommand{\wt}{\widetilde}
\newcommand{\ol}{\overline}
\begin{document}

\title{
Toric arrangements and Bloch-Kato pro-$p$ groups. 
}
\author{Emanuele Delucchi}
\address{
SUPSI-IDSIA, University of applied arts and sciences of Southern Switzerland}
\email{emanuele.delucchi@supsi.ch}
\author{Ettore Marmo}
\address{
Dipartimento di Matematica,
Università degli Studi di Milano-Bicocca}
\email{ettore.marmo01@universitadipavia.it}
\date{\today}
\subjclass[2020]{Primary 
52C35,
20E18%
; Secondary
06A07,
20F36, 
12F10%
.}

\setcounter{tocdepth}{1}

\begin{abstract}
We prove a purely combinatorial obstruction for the Bloch-Kato property within the class of fundamental groups of complement manifolds of toric arrangements (i.e., arrangements of hypersurfaces in the complex torus). As a stepping stone we obtain a combinatorial obstruction for the cohomology of a supersolvable arrangement to be generated in degree 1.\\
Our result allows us to prove that
\begin{itemize}
\item[-] for all prime numbers $p$, the pro-$p$ completion of the pure braid group on $k$ strands has the Bloch-Kato property if and only if $k\leq 3$;
\item[-] for all prime numbers $p$, the pro-$p$ completion of the pure mapping class group of the sphere $S^2$ with $k$ punctures has the Bloch-Kato property if and only if $k\leq 4$.
\end{itemize}
\end{abstract}

\maketitle

\tableofcontents

\section{Introduction}

The goal of this paper is to apply recent developments in the geometric and combinatorial theory of arrangements of hypersurfaces in order to obtain results in the study of Bloch-Kato pro-$p$ groups. 
{To this end we establish a connection %
between algebraic number theory and combinatorial topology that may lead to further progress in the future.}

\medskip

A major research area in algebraic number theory is devoted to understanding the structure of {absolute Galois groups} (see \S\ref{subsec:bk}). An important open problem is to classify which profinite groups can arise as absolute Galois groups.
Since many questions about absolute Galois groups can be reduced to the study of their maximal pro-$p$ quotients (the {maximal pro-$p$ Galois groups}, see \S\ref{ssec:probk}), recognizing such quotients among all abstract pro-$p$ groups is an important open question. In this context, the {\bf Bloch-Kato property} of a pro-$p$ group (\Cref{def:bk1}) is a necessary cohomological condition for the group to be a maximal pro-$p$ Galois group. %
In this paper we focus on a family of groups associated to some geometric objects, namely the pro-$p$ completions of the fundamental groups of the complement space of a toric arrangement. 

\medskip

{\bf Toric arrangements} are arrangements of hypersurfaces in a complex torus (see \S\ref{subsec:toric}), and they generalize the classical notion of (complex) hyperplane arrangement which has given rise to a rich theory over the last decades \cite{OT92}. Recent work in the field has revealed a subtle interplay between the intersection pattern of the hypersurfaces and notable topological properties of the complement of the arrangement inside the ambient torus. For instance, there is a sufficient combinatorial condition that ensures that such complements are Eilenberg-Maclane spaces \cite{BD24}. Moreover, explicit presentations for the cohomology algebra of such spaces are available \cite{CDDMP20}. This allows us to study  the Bloch-Kato property of the pro-$p$ completion of the fundamental groups of toric arrangement complements.

\medskip

Before stating precisely our main results, let us stress that our techniques apply to some notable classes of groups. For instance, as an application of our work we can prove that for all primes $p$, the pro-$p$ completion of the pure braid group on at least $4$ strands is not Bloch-Kato -- hence cannot be realized as a maximal pro-$p$ Galois group. The same holds for the pure mapping class group of the two-dimensional sphere with at least $5$ punctures.

\subsection{Bloch-Kato pro-$p$ groups and maximal pro-$p$ Galois groups}\label{subsec:bk}

Let $\F$ be a field and let $G_\F$ denote the absolute Galois group of $\F$. We point to \Cref{ssec:probk} for relevant terminology and further context.\\
The Galois group of any field extension is known to be a profinite group, see \cite{RZ10} and, by a theorem of Leptin \cite{Le55}, the converse is also true: any profinite group can be realized as the Galois group of some Galois field extension. However, not all profinite groups can be realized as the absolute Galois group of some field and determining precisely for which profinite groups this can be done is an important open problem. 
A standard approach to this representability problem is to restrict the investigation to the better behaved class of pro-$p$ groups (see \S\ref{ssec:procom}). That is, instead of studying the absolute Galois groups $G_\F$, we consider their maximal pro-$p$ quotients $G_{\F}(p)$, called maximal pro-$p$ Galois groups (see \S\ref{sub:mgpg}). A consequence of the celebrated Norm-Residue Isomorphism theorem \cite{We08, We09}) is that the $\F_p$-cohomology algebra 
$H^\bul(G_\F(p), \F_p)$ is a {quadratic algebra}. %
Since every closed subgroup of a maximal pro-$p$ Galois group is again a maximal pro-$p$ Galois group, the following definition is quite natural in this context.

\begin{definition}\label{def:bk1}
   A pro-$p$ group $G$ is \emph{Bloch-Kato} if for every closed subgroup $K \leq G$, the cohomology algebra $H^\bul(K, \F_p)$ is a quadratic algebra.
\end{definition}

Clearly every maximal pro-$p$ Galois group has the Bloch-Kato property, however it is unknown at this time whether the converse implication must hold as well, that is, whether a pro-$p$ group with the Bloch-Kato property can be necessarily realized as a maximal pro-$p$ Galois group. Obstructions to this implication were considered, for example, in \cite{QSV22} and \cite{Qua14}. With this in mind, finding new examples of Bloch-Kato pro-$p$ groups is an important problem in the field of algebraic number theory.

\subsection{Toric arrangements and their groups}\label{subsec:toric}

A toric arrangement is a finite family $\B$ of hypersurfaces given as level sets of characters of a finite-dimensional complex torus.
From a topological point of view, the main object of interest is the {complement manifold} $M(\B)$ obtained by removing the hypersurfaces in $\B$ from the ambient torus. See \Cref{sec:general} for the precise definitions and some context. Note that the cohomology ring $H^\bullet(M(\B),\mathbb Z)$  can be explicitly presented by generators and relations \cite{CDDMP20}.

We will be especially interested in the class of {\em supersolvable} toric arrangements. The theory of supersolvability  is a classical subject 
    \cite{FR85, Coh01} and has recently been extended beyond the case of hyperplanes \cite{BD24}. Such arrangements are characterized by a combinatorial property of their intersection pattern which, among other things, implies that the complement manifold is a $K(\pi, 1)$ space (i.e. the homotopy groups $\pi_i(M(\B))$ are trivial if $i > 1$). Thus, if $\B$ is supersolvable the cohomology of $G(\B)$ can be computed as the cohomology of $M(\B)$.  In view of \Cref{def:bk1}, in order to find obstructions to the pro-$p$ completion of $G(\B)$ being Bloch-Kato we will construct an associated, still supersolvable toric arrangement $\B'$ such that $M(\B')$ is a topological cover of $M(\B)$ and so that non-quadraticity of the cohomology of $M(\B')$ can be ascertained by inspecting the above-mentioned presentation.

\subsection{Main results and structure of the paper}

The main result of this paper can be stated as follows.
\begin{theoremA}[see \Cref{th:arrangement.group.non.bk}]
    For any primitive, essential, supersolvable arrangement $\B$ and for all prime numbers $p$ aside from a finite subset of exceptions (dependent on $\B$, possibly empty) the pro-$p$ completion $G(\B)_{\hat p}$ of the fundamental group of $M(\B)$ does not have the Bloch-Kato property.
\end{theoremA}

This result is in line with the expectations of the experts in the field that Bloch-Kato pro-$p$ groups should be ``rare'' and shows how some standard techniques from geometry may be used in the right setting to investigate purely algebraic questions of this kind.
As a special case, since the pure braid groups can be realized as fundamental groups of primitive, strictly supersolvable toric arrangements, we obtain the following result:
\begin{corollaryA}
[see \Cref{th:braid.bk}]
    Let $k$ be a positive integer. For all prime numbers $p$, the pro-$p$ completion of the pure braid group on $k$ strands, $(\pbraid k)_{\hat p}$, and the pro-$p$ completion of the pure mapping class group of the sphere $S^2$ with $k+1$ punctures, $(\mcg{k+1})_{\hat p}$, have the Bloch-Kato property only if $k\leq 3$.
\end{corollaryA}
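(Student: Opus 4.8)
The plan is to exhibit both families as fundamental groups of one family of essential, primitive, supersolvable toric arrangements, reduce everything to a single small base case by a retraction argument, and finally upgrade \Cref{th:arrangement.group.non.bk} from ``all but finitely many primes'' to ``all primes'' on that base case. For the first step, after fixing three points by a M\"obius transformation one has $M_{0,n}\cong\Conf_{n-3}(\C\setminus\{0,1\})$, the complement of the toric arrangement $\B_n'$ in $(\C^*)^{n-3}$ cut out by the equations $x_i=1$ and $x_i=x_j$; this arrangement is primitive, essential and supersolvable, and $\mcg n\cong G(\B_n')$. The pure braid group fits into the same picture because the full twist spans a central direct $\Z$-factor: for $k\ge3$ the fibration $\C^*\simeq\operatorname{Aff}_1\to\Conf_k(\C)\to M_{0,k+1}$ gives a central extension $0\to\Z\to\pbraid k\to\mcg{k+1}\to0$, which splits because the class of the full twist is a primitive vector in the abelianization $\pbraid k^{\mathrm{ab}}\cong\Z^{\binom k2}$, hence spans a direct summand. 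Thus $\pbraid k\cong\Z\times\mcg{k+1}$, and in particular $(\mcg{k+1})_{\hat p}$ is a direct factor, hence a closed subgroup, of $(\pbraid k)_{\hat p}$.

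Granting this, both statements of the Corollary reduce to the single assertion that $\mcg n=G(\B_n')$ fails the Bloch--Kato property for every $n\ge5$ and every prime $p$: for the mapping class group this is the claim itself, while for the braid group it follows because a non-Bloch--Kato closed subgroup forces the ambient group to be non-Bloch--Kato (immediately from \Cref{def:bk1}). I would then reduce further to $n=5$. Since $\C\setminus\{0,1\}$ is an open surface, the Fadell--Neuwirth fibrations obtained by forgetting one of the $n-3$ moving points split, so $\mcg5$ is a retract of $\mcg n$ for all $n\ge5$. The pro-$p$ completion functor sends a retraction to a split continuous injection whose image is closed (a continuous injection out of a profinite group is a homeomorphism onto its compact, hence closed, image), and closed subgroups of Bloch--Kato pro-$p$ groups are again Bloch--Kato. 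Thus it suffices to prove that $G(\B_5')_{\hat p}$ is not Bloch--Kato for every prime $p$, where $\B_5'$ is the rank-$2$ arrangement in $(\C^*)^2$ with hypersurfaces $\{x=1\}$, $\{y=1\}$ and $\{x=y\}$, all passing through the single point $(1,1)$.

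For this fixed arrangement \Cref{th:arrangement.group.non.bk} already yields the conclusion for all primes outside a finite exceptional set, so only these finitely many primes remain. To dispatch them I would argue directly with the construction behind the Main Theorem. A Bloch--Kato obstruction must be witnessed by an \emph{open} subgroup of the pro-$p$ completion, equivalently by a finite regular cover of $p$-power degree; for each exceptional prime $p$ I would take the elementary abelian $p$-cover $M(\B'')$ produced by that construction and, using the presentation of its cohomology algebra from \cite{CDDMP20}, exhibit a class in $H^2(M(\B'');\F_p)$ lying outside the image of the cup product $H^1\otimes H^1\to H^2$. Such a class certifies that $H^\bul(M(\B'');\F_p)$ is not generated in degree $1$, hence not quadratic; so the corresponding open subgroup of $G(\B_5')_{\hat p}$ is not quadratic — by the same identification between the pro-$p$ cohomology and the aspherical cover $M(\B'')$ that underlies \Cref{th:arrangement.group.non.bk} — and therefore $G(\B_5')_{\hat p}$ is not Bloch--Kato at $p$.

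The heart of the matter, and the main obstacle, is precisely this passage from ``cofinitely many primes'' to ``all primes''. The subtlety is that it cannot be settled by a single characteristic-zero (or integral) computation: the witnessing covers have $p$-power degree and therefore genuinely depend on $p$, so one must show that the finite exceptional set attached by \Cref{th:arrangement.group.non.bk} to the one arrangement $\B_5'$ is in fact empty. Concretely, the exceptional primes are those for which the degree-$2$ obstruction class, nonzero in characteristic $0$, might become decomposable after reduction modulo $p$; ruling this out requires the explicit mod-$p$ cohomology of the relevant cover rather than its rational shadow. I expect this to be feasible because $\B_5'$ has rank $2$ with only three hypersurfaces through one point, so the cover and the presentation of \cite{CDDMP20} are small enough to analyse prime by prime, which is exactly what converts the Main Theorem into the ``for all primes $p$'' statement of the Corollary.
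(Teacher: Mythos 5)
Your two reduction steps are sound and genuinely different from the paper's route: the splitting $\pbraid{k}\cong\Z\times\mcg{k+1}$ (which the paper obtains topologically from \Cref{lem:essentialization} rather than by primitivity of the full twist in the abelianization) and, more notably, the Fadell--Neuwirth retraction making $\mcg{5}$ a retract of $\mcg{n}$, which collapses all $n\geq 5$ to one base case; the paper instead treats every $n$ uniformly via the counting bound of \Cref{rem:largenp} and needs an ad hoc argument only at the single pair $(n,p)=(3,2)$. However, the heart of your proof --- the passage from cofinitely many primes to all primes on the base case --- has a genuine gap, precisely where you place the main difficulty. For your $\B_5'$ (the paper's $\ess{\tbraid{3}}$, three hypersurfaces in $(\C^\times)^2$), the unique exceptional prime is $p=2$, since $n(2,2)=3=|\B_5'|$ forces $\phi_2$ to be surjective. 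Your plan for the exceptional primes is to take ``the elementary abelian $p$-cover produced by that construction'' and exhibit an indecomposable class mod $p$. But \Cref{prop:existence.primitive.p.cover} is an if-and-only-if: when $\phi_p$ is surjective, \emph{no} primitive $p$-cover exists at all --- the paper notes explicitly, right after \Cref{cor:existence.primitive.p.covers}, that primitive $2$-covers of this very arrangement cannot exist. So at $p=2$ the construction behind the Main Theorem produces no cover to analyze. Nor can you substitute an arbitrary cover of $2$-power degree and still run the Section 4 machinery: for a non-primitive cover the lifts $H_i^U$ are disconnected, the deck group permutes the hypersurfaces of $\B^U$, and the hypothesis $\sigma\cdot H=H$ of \Cref{prop:action.on.cohomology} fails.

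The paper closes this case by an argument of a different kind (\Cref{lem:explicit.cover}, \Cref{cor:n3p2}): it takes the explicit non-primitive cover of degree $8$ given by $A=\bigl(\begin{smallmatrix}2&1\\0&-4\end{smallmatrix}\bigr)$, computes the Poincar\'e polynomial $1+6t+21t^2$ of the lifted complement from the characteristic polynomial of its poset of layers, and observes that $21>\binom{6}{2}=15$, so the cup product from degree one cannot surject onto $H^2$; since the integral cohomology is torsion-free, the same rank count rules out degree-one generation over $\F_2$, and \Cref{prop:cohom.comparison} transfers this to the open subgroup of the pro-$2$ completion. This also corrects your expectation that the exceptional primes require a genuinely mod-$p$ computation because a characteristic-zero obstruction ``might become decomposable after reduction'': in the paper's witness the obstruction is a Betti-number inequality, visible already over $\Q$ and inherited mod every $p$ by torsion-freeness; what changes with $p$ is only the cover, not the nature of the obstruction. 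A last small imprecision: failure of Bloch--Kato need not be witnessed by an \emph{open} subgroup, as \Cref{def:bk1} quantifies over all closed subgroups; this does not harm your strategy, since any witness suffices, but the claim as stated is not correct.
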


Since our work touches on aspects from two rather distinct areas, 
 we will start by recalling the terminology 
 and 
 some background about both Bloch-Kato pro-$p$ groups and toric arrangements. This will be done in \Cref{sec:setup}. After that, we will proceed to prove some general results group actions on essential toric arrangements. This part of the paper holds without supersolvability assumptions and may be of independent interest. In particular,
\begin{itemize}
\item 
In \Cref{sec:covers} given a primitive toric arrangement $\B$ we characterize the primes $p$ for which the complement manifold $M(\B)$ admits a {\em primitive $p$-cover}, i.e., a $p$-sheeted topological covering by the complement of another primitive toric arrangement (\Cref{prop:existence.primitive.p.cover}). From this characterization it follows that, for any  given $\B$, primitive $p$-coverings exist for all but finitely many primes $p$. We leave it as an open question to determine whether such set of primes is combinatorially determined, see \Cref{remCp}.
\item In \Cref{sec:action} we first consider a general toric arrangement $\B$ with an action of a finite group and we derive  a condition that ensures that the induced action on $H^\bul(M(\B),\mathbb Z)$ fixes the subring generated in degree $1$ but acts nontrivially on the top-degree cohomology group (\Cref{prop:action.on.cohomology}). This is accomplished by examining the induced action on the presentation for the cohomology ring given in \cite{CDDMP20}. We then obtain a characterization of the primes $p$ for which a primitive toric arrangement $\B$ admits a primitive $p$-cover whose cohomology algebra is not generated in degree one (\Cref{th:p.cover.cohomology.not.one.generated}).
\end{itemize}
We then focus on supersolvable arrangements.
\begin{itemize}
\item  In \Cref{sec:bloch.kato} we show that for every supersolvable toric arrangement $\B$ there an isomorphism  of rings between the cohomology of the pro-$p$ completion of the arrangement's group $G(\B)$ and the cohomology of the complement $M(\B)$ with coefficients in the finite field $\F_p$ (\Cref{prop:cohom.comparison}). This is the stepping stone to \Cref{th:arrangement.group.non.bk}, where, given a supersolvable primitive toric arrangement $\B$, we characterize the primes $p$ for which the pro-$p$ completion of $G(\B)$ is not Bloch-Kato and, hence, cannot be realized as a maximal pro-$p$ Galois group. 
\item 
We end the paper with an application to pure braid groups. Indeed pure braid groups can be realized as the fundamental group of the complement of a supersolvable primitive toric arrangement, so all our results apply and allow us to prove that for all primes $p$ the pro-$p$ completion of the pure braid group on $n>3$ strands is not Bloch-Kato (\Cref{th:braid.bk}.(1)). This implies an analogous non-Bloch-Kato result for the pure mapping class group of the ($n+2$)-fold punctured sphere $S^2$ (\Cref{th:braid.bk}.(2)).
\end{itemize}

\subsection{Note} During the final stages of preparation of this manuscript we became aware of the paper \cite{Koch25} whose results lead to an independent %
proof of our Theorems 5.1.3 and 5.2.4. The techniques employed in \cite{Koch25} are significantly different than the ones developed in our paper and indeed they provide different obstructions. 
In particular, as \cite{Koch25} is concerned with coherence properties, their aim is to prove the existence of subgroups that are finitely generated but not finitely presented.
On the other hand, the finite-index subgroups of a toric arrangement group $G(\B)$ constructed in \Cref{sec:covers} are themselves toric arrangement groups and so they are always finitely presented. Moreover, the toric arrangement associated to any such finite-index subgroup can be described explicitly.

\subsection{Acknowledgements} The authors thank Thomas Weigel for introducing them to each other and encouraging the collaboration that led to this work. The second author is a member of Gruppo Nazionale per le Strutture Algebriche, Geometriche e le loro Applicazioni (GNSAGA) which is part of the Istituto Nazionale Di Alta Matematica (INDAM).

\section{Setup and notation}\label{sec:setup}

\subsection{Pro-$p$ groups and the Bloch-Kato property}\label{ssec:probk}

\subsubsection{Profinite and pro-$p$ groups}\label{ssec:procom}
Let $(I, \preceq)$ be a \emph{directed} poset, that is a partially ordered set $I$ such that for all $i, j \in I$ there exists $k \in I$ with $i, j \preceq k$. A family of finite groups $\{G_i\}_{i \in I}$ and group homomorphisms $\{\varphi_{ji} : G_j \to G_i \mid i \preceq j \}_{i, j \in I}$ indexed by a directed poset $(I, \preceq)$ is called an \emph{inverse system} of groups if whenever $i \preceq j \preceq k$ the following triangle commutes:
\[\begin{tikzcd}
    G_k \arrow[rr, "\varphi_{ki}"] \arrow[dr, "\varphi_{kj}" below left] & & G_i \\
    & G_j \arrow[ur, "\varphi_{ji}" below right] &
\end{tikzcd}\]  
    
From the datum of an inverse system of finite groups $\Phi = (\{G_i\}_{i \in I}, \{\varphi_{ij}\}_{i, j \in I})$ we can construct a new group, called the \emph{inverse limit} of $\Phi$, defined by
\[
    \varprojlim\nolimits_{i \in I} G_i := \left\lbrace (g_i)_{i\in I} \in \prod_{i \in I} G_i \mid \varphi_{ji}(g_j) = g_i \ \forall i \preceq j \right\rbrace 
\]
A \emph{profinite group} is a group that can be expressed as the inverse limit of some inverse system of finite groups. These groups are equipped with a natural topology and they can be equivalently characterized as the topological groups which are Hausdorff, compact and totally disconnected. Such groups arise naturally in algebraic number theory, since the Galois group of any Galois field extension is a profinite group \cite{RZ10}. \\
A natural way to construct profinite groups is given by the \emph{profinite completion}. Let $G$ be a discrete group and define a directed poset $\mathcal N$ by considering all the normal, finite-index subgroups $N \unlhd G$ ordered by reverse containment (i.e, $N\preceq N'$ if $N\supseteq N'$). The profinite completion $\widehat G$ of $G$ is then the inverse limit of the inverse system given by the finite quotients of $G$ and the natural projection maps
\[
    \widehat G := \varprojlim\nolimits_{N \in \mathcal N} G/N
\]
The universal property of the inverse limit yields a
canonical map $\rho_G : G \to \widehat G$ which is injective if and only if $G$ is residually finite, i.e. such that $\bigcap_{N \in \mathcal N} N = \{1\}$.
\medskip

For each prime number $p$ we can define the class of \emph{pro-$p$ groups} consisting of the inverse limits of finite groups of cardinality a power of $p$ (i.e. finite $p$-groups). Of course every pro-$p$ group is also a profinite group and many problems about profinite groups can be reduced to studying pro-$p$ groups.
Starting from a discrete group $G$ and a prime number $p$ we can define a directed poset ${\mathcal N}_p$ consisting of the normal subgroups $N \unlhd G$ such that the quotient $G/N$ is a finite $p$-group. The inverse limit of the inverse system given by such quotients is called the pro-$p$ completion of $G$:
\[
    G_{\widehat p} := \varprojlim\nolimits_{N \in {\mathcal N}_p} G/N
\]
As in the case of the pro-$p$ completion, the universal property of the inverse limit yields a canonical map $\rho_{G, p} : G \to G_{\widehat p}$  
 which is injective if and only if $G$ is ``\emph{residually $p$}'', that is if $\bigcap_{N \in \mathcal N_p} N = \{1\}$.
For a complete exposition about the theory of profinite and pro-$p$ group we refer to \cite{RZ10} 

\subsubsection{Maximal pro-$p$ Galois groups}\label{sub:mgpg}

Let $\F$ be a field and $\K \supset \F$ an algebraic extension (written $\K/\F$). An element $\alpha \in \K$ is said to be separable over $\F$ if its minimal polynomial $f(x) \in \F[x]$ has pairwise distinct roots. The extension $\K/\F$ is then called \emph{separable} if every element of $\K$ is separable over $\F$. 
Let $\F^{\rm alg}$ be an algebraic closure of $\F$. The largest subfield $\K$ of $\F^{\rm sep}$ containing $\F$ such that $\K/\F$ is a separable extension is called the \emph{separable closure} of $\F$ and is denoted by $\F^{\rm sep}$.

The extension $\F^{\rm sep}/\F$ is a Galois extension and its Galois group $G_\F$ is called the \emph{absolute Galois group} of $\F$ and it encodes in its quotients all the Galois groups of the ``well-behaved'' extensions of $\F$. As such, absolute Galois groups are one of the central objects in the algebraic theory of numbers.

For a prime number $p$, the largest quotient $G_\F(p)$ of $G_\F$ that is a pro-$p$ group is called the \emph{maximal pro-$p$ Galois group} of $\F$. Recognizing which pro-$p$ groups can be realized as maximal pro-$p$ Galois groups is one of the challenges of modern Galois theory and it is a first step to understanding the structure of absolute Galois groups.
The proof of the celebrated Norm-Residue theorem by Rost and Voevodsky states that under mild conditions ($\F$ contains a $p$-th root of unity) the $\F_p$ cohomology of $G_\F(p)$ is a quadratic algebra.

\begin{definition}\label{def:quadratic.algebra}
   A graded algebra $A^\bul$ is \emph{quadratic} if it is generated in degree one and its relation ideal is  generated in degree $2$, in other words if
   \[
       A^\bul \cong T^\bul(A^1)/(R) \quad \text{with} \quad R \subset T^2(A^1) = A^1 \otimes A^1
   \]
   where $T^\bul(V)$ denotes the free tensor algebra over a vectorspace $V$.
\end{definition}

Since every closed subgroup of a maximal pro-$p$ Galois group is again a maximal pro-$p$ Galois group, the Norm-Residue theorem implies that $G_\F(p)$ is an example of Bloch-Kato pro-$p$ group.

\subsection{Toric arrangements}
\subsubsection{Generalities}\label{sec:general}

Let $\Lambda$ be a free abelian group of rank $d$ and consider the (multiplicative) group $\mathbb C^\times$. We consider these as topological groups with respect to the discrete topology on $\Lambda$ and the usual topology of $\mathbb C^\times$. This induces a topology on the set 
$T_\Lambda:=\Hom(\Lambda,\mathbb C^\times)$ that makes it homeomorphic to the complex torus $(\mathbb C^\times)^d$. To every nonzero $\chi\in \Lambda$ corresponds a character $\ch{\chi}:T_\Lambda \to \mathbb C^\times$ defined by $\ch{\chi}(\varphi)\mapsto \varphi(\chi)$. Given $b\in \mathbb C^\times$ define
\[
H_{(\chi,b)} := \chi^{-1}(b)\subseteq T_{\Lambda}.
\]
Then $H_{(\chi,b)}$ is a hypersurface in $T_\Lambda$. We will write $H_{\chi}:=H_{(\chi,1)}$ as a shorthand for the kernel of the character $\chi$.

\begin{definition}[Toric arrangement]\label{def:toric.arrangement}
Let $\Lambda$ be a free abelian group of rank $d$. Any choice of nonzero elements $\chi_1,\ldots,\chi_n\in \Lambda$ and $b_1, \dots, b_n \in \C^\times$ defines a {\em toric arrangement} in $T_\Lambda \simeq (\mathbb C^\times)^d$ which we denote by
$$
\B:=\{H_1,\ldots,H_n\},\quad
\textrm{ where } H_i= H_{(\chi_i, b_i)}.
$$
The {\em complement} of the arrangement and its fundamental group are
$$
	M(\B) := T \setminus \bigcup_{H \in \B} H,\quad\quad
G(\B) := \pi_1(M(\B), *)
$$
(note that $M(\B)$ is always path-connected).
\end{definition}

\begin{remark}[Defining characters]
If no confusion can arise, we will identify elements of $\Lambda$ and characters of $T_\Lambda$ via the canonical isomorphism $\chi \mapsto\ch{\chi}$. Under this identification, the $\chi_i$ in \Cref{def:toric.arrangement} are called {\em defining characters} of $\B$.
\end{remark}

\begin{remark}[Central arrangements]
In the literature, a toric arrangement as in \Cref{def:toric.arrangement} is called {\em central} if $b_i=1$ for all $i$, i.e., the hypersurfaces are kernels of characters.
\end{remark}

\begin{remark}[The case $\Lambda=\mathbb Z^d$] \label{caso.Z} When $\Lambda = \mathbb Z^d$ we can use the canonical basis ${e}_1, \dots, {e}_d$ to write each element $\chi \in \Z^d$ as a linear combination $\chi = a_1^\chi {e}_1 + \dots a_d^\chi {e}_d$ or, equivalently, as a $d$-tuple $(a_1^\chi, \dots, a_d^\chi)$. With this notation we can write down explicitly the homeomorphism $T_\Lambda \to (\C^\times)^d$ as $\alpha \mapsto (\alpha({e}_1), \dots, \alpha({e}_d))$ for $\alpha : \Lambda \to \C^\times$. Under this identification, the character $\ch{\chi} : (\C^\times)^d \to \C^\times$ associated to $\chi$ is usually written multiplicatively, hence for $b\in \mathbb C^\times$ the hypersurface $H_{(\chi,b)}$ is given by 
\[
    H_{(\chi, b)} = \left\{ \textbf{t} = (t_1,t_2, \dots, t_d) \in (\C^\times)^d \mid \ch{\chi}(\textbf{t}) = t_1^{a_1^\chi}t_2^{a_2^\chi} \cdots t_d^{a_d^\chi} = b
    \right\}
\]
\end{remark}

\begin{definition}[Poset of layers] 
Let $\B$ be as in \Cref{def:toric.arrangement}. An {\em intersection} of $\B$ is any nonempty subset of $T_\Lambda$ of the form $\bigcap_{H\in \B'}H$ for some $\B'\subseteq\B$.  Connected components of intersections are called {\em layers}. 
We write $\lay(\B)$ for the {\em poset of layers}, i.e., the set of all layers partially ordered by reverse inclusion ($X\leq Y$ if $X\supseteq Y$).  See \Cref{fig_e1}.
\end{definition}

The partial order $\lay(\B)$ is commonly understood as ``the combinatorial data'' of the arrangement. Accordingly, one says that a property of $\B$ is combinatorial if it is determined by $\lay(\B)$.

\begin{remark} If $b_i\in S^1$ for all $i$ (e.g., if $\B$ is central) the poset $\lay(\B)$ is completely determined by the intersections of the induced arrangement in the compact torus $(S^1)^d$. 
\end{remark}

\begin{example}\label{ex1} 
Consider the central arrangement $\B$ defined in the two-dimensional torus $T=\Hom(\mathbb Z^2,\mathbb C^\times)\simeq (\mathbb C^\times)^2$ by the characters $\chi_1=(1,0), \chi_2=(1,-1)$ and $\chi_3=(1,1)$. The induced arrangement in the compact torus and the poset of layers $\lay(\B)$ are depicted in \Cref{fig_e1}. 
\end{example}

\begin{figure}[h]
\centering
\begin{tikzpicture}[x=10em,y=10em]
\node[anchor=center] (A) at (0,0) {};
\node[anchor=center] (B) at (1,0) {};
\node[anchor=center] (C) at (1,1) {};
\node[anchor=center] (D) at (0,1) {};
\fill[gray!10] (A.center) -- (B.center) -- (C.center) -- (D.center) -- (A.center);
\draw[very thick] (D.center) -- (A.center);
\draw[very thick] (C.center) -- (A.center);
\draw[very thick] (D.center) -- (B.center);
\node[anchor=north east] (P) at (0,0) {$P$};
\node[anchor=south] (Q) at (0.5,0.5) {$Q$};
\node[anchor=east] (H1) at (0,0.5) {$H_1$};
\node[anchor=north west] (H2) at (0.25,0.3) {$H_2$};
\node[anchor=north east] (H3) at (0.76,0.3) {$H_3$};
\end{tikzpicture}
\quad\quad\quad
\begin{tikzpicture}[x=5em,y=4.5em]
\node[anchor=center] (O) at (0,-1) {$T$};
\node[anchor=center] (H1) at (-1,0) {$H_1$};
\node[anchor=center] (H2) at (0,0) {$H_2$};
\node[anchor=center] (H3) at (1,0) {$H_3$};
\node[anchor=center] (P) at (-.5,1) {$P$};
\node[anchor=center] (Q) at (.5,1) {$Q$};
\draw (H1.north) -- (P.south) -- (H2.north) -- (Q.south) -- (H3.north);
\draw (P.south) -- (H3.north);
\draw (O.north) -- (H1.south);
\draw (O.north) -- (H2.south);
\draw (O.north) -- (H3.south);
\end{tikzpicture}
\caption{The arrangement $\B$ from \Cref{ex1} and the poset of layers $\lay(\B)$}\label{fig_e1}
\end{figure}

\subsubsection{Essential arrangements}
\begin{definition}[Essential arrangements]
An arrangement $\B$ is \emph{essential} if the maximal elements of the poset of layers $\lay(\B)$ have dimension $0$. Equivalently, the elements $\chi_1,\ldots,\chi_n$ generate a full-rank sublattice of $\Lambda$.
\end{definition}

\begin{lemma} \label{lem:essentialization}
Let $\B$ be a central toric arrangement in $T_\Lambda$ with set of defining characters $X:=\{\chi_1,\ldots,\chi_n\}$, and let $m:=\rk(\Lambda) - \rk(\langle \chi_1, \dots, \chi_n\rangle_\Z)$. Then there is a sublattice $\Lambda'\subseteq \Lambda$ containing $X$ and such that
\begin{enumerate}
\item the toric arrangement $\ess{\B}$ defined by $X$ in $T_{\Lambda'}$ is essential; 
\item there is an isomorphism of posets $\lay(\B)\simeq \lay(\ess{\B})$, and 
\item there is a homeomorphism
$
M(\B) \cong  M(\ess{\B})\times (\mathbb C^\times)^m
$.
\end{enumerate}
\end{lemma}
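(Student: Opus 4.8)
The plan is to realize the ambient torus as a product $T_{\Lambda'}\times(\C^\times)^m$ in which the first factor carries all of the arrangement and the second is a trivial direction, so that everything decouples. The one genuinely algebraic input is the choice of $\Lambda'$: I would set $\Lambda_0:=\langle\chi_1,\dots,\chi_n\rangle_\Z$, which has rank $r:=\rk(\Lambda)-m$, and take $\Lambda'$ to be its \emph{saturation}
$
\Lambda':=\{\lambda\in\Lambda \mid k\lambda\in\Lambda_0 \text{ for some } k\geq 1\}.
$
This is a subgroup containing $X$, of the same rank $r$ as $\Lambda_0$ (since $\Lambda'/\Lambda_0$ is finite), and by construction $\Lambda/\Lambda'$ is torsion-free, hence free of rank $m$. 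Consequently the exact sequence $0\to\Lambda'\to\Lambda\to\Lambda/\Lambda'\to 0$ splits, and I obtain a decomposition $\Lambda=\Lambda'\oplus\Lambda''$ with $\Lambda''\cong\Z^m$.

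Next I would apply the functor $\Hom(-,\C^\times)$. Because $\C^\times$ is divisible, this functor is exact and carries direct sums to direct products, so the splitting yields a homeomorphism of topological groups $T_\Lambda\cong T_{\Lambda'}\times T_{\Lambda''}$ with $T_{\Lambda''}\cong(\C^\times)^m$. Since each $\chi_i$ lies in $\Lambda'$, it pairs trivially with the $\Lambda''$-summand, so the character $\ch{\chi_i}$ factors through the projection onto $T_{\Lambda'}$; hence every hypersurface of $\B$ has the product form $H_{\chi_i}=\wt H_i\times T_{\Lambda''}$, where $\wt H_i\subseteq T_{\Lambda'}$ is the corresponding hypersurface defining $\ess{\B}$. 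Claim (1) is then immediate from the rank criterion for essentiality: the defining characters of $\ess{\B}$ generate $\Lambda_0$, which is full-rank in $\Lambda'$.

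The remaining two claims follow formally from this product form. For (3), distributing the product over the union gives $\bigcup_i H_{\chi_i}=\bigl(\bigcup_i \wt H_i\bigr)\times T_{\Lambda''}$, whence $M(\B)=\bigl(T_{\Lambda'}\setminus\bigcup_i\wt H_i\bigr)\times T_{\Lambda''}=M(\ess{\B})\times(\C^\times)^m$. For (2), any intersection of members of $\B$ equals $\bigl(\bigcap_{i\in S}\wt H_i\bigr)\times T_{\Lambda''}$, and since $T_{\Lambda''}$ is connected its connected components are exactly the sets $C\times T_{\Lambda''}$ with $C$ a component of $\bigcap_{i\in S}\wt H_i$. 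The assignment $C\mapsto C\times T_{\Lambda''}$ is therefore an inclusion-reversing bijection between the layers of $\ess{\B}$ and those of $\B$, i.e. an isomorphism $\lay(\ess{\B})\cong\lay(\B)$.

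I expect the only substantive point to be the very first one: arranging that $\Lambda/\Lambda'$ be free, so that the lattice inclusion splits and $\Hom(-,\C^\times)$ produces an honest product of tori rather than merely a (non-split) exact sequence of groups. Passing to the saturation, rather than working with $\Lambda_0$ itself whose quotient may carry torsion, is precisely what makes the splitting available; once it is in hand, the essentiality, the homeomorphism, and the poset isomorphism are all direct consequences of the product decomposition of the hypersurfaces.
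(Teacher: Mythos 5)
Your proposal is correct and follows essentially the same route as the paper: the paper also takes $\Lambda'$ to be the saturation of $\langle\chi_1,\dots,\chi_n\rangle_\Z$ (written there as $\Lambda\cap\langle\chi_1,\ldots,\chi_n\rangle_\Q$), splits $\Lambda=\Lambda'\oplus\Lambda''$, and deduces all three claims from the induced product decomposition $\Phi(H_{\chi_i})=H_{\ess{\chi_i}}\times T_{\Lambda''}$. Your explicit justifications of the splitting (torsion-free quotient) and of the exactness of $\Hom(-,\C^\times)$ are slightly more detailed than the paper's, but the argument is the same.
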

\begin{proof}
Consider $\Lambda':=\Lambda\cap\langle \chi_1,\ldots,\chi_n\rangle_\Q
$, the sublattice of $\Lambda$ consisting of all elements that are rational combinations of the $\chi_i$. Then $\rk(\Lambda') = \rk(\langle \chi_1, \dots, \chi_n\rangle_\Z)$. The short exact sequence of abelian groups
\[\begin{tikzcd}
    0 \arrow[r] & \Lambda' \arrow[r, hook] & \Lambda \arrow[r, two heads] & \Lambda/\Lambda' \arrow[r] & 0
\end{tikzcd}\]
splits, so there is a sublattice $\Lambda''\subseteq \Lambda$ of rank $m$ such that $\Lambda=\Lambda'\oplus \Lambda''$. This induces a homeomorphism
$$
\Phi: \Hom(\Lambda,\mathbb C^\times) \to 
\Hom(\Lambda',\mathbb C^\times)
\times
\Hom(\Lambda'',\mathbb C^\times)
$$
sending $\varphi\in \Hom(\Lambda,\C^\times )$ to the pair $(\varphi',\varphi'')$ defined by the restrictions $\varphi' :=\varphi|_{\Lambda'}$ and $\varphi'' :=\varphi|_{\Lambda''}$. The inverse of $\Phi$ maps an element $(\alpha, \beta) \in \Hom(\Lambda', \C^\times) \times \Hom(\Lambda'', \C^\times)$ to  $(\alpha \oplus \beta)(x) = \alpha(x')\beta(x'')$ where $x = x' \oplus x'' \in \Lambda$.

For every $i$ let $\ess{\chi_i}\in \Lambda'$ be the element such that $\chi_i=\ess{\chi_i}\oplus 0$. Now, given $\varphi \in H_{\chi_i} \subset \Hom(\Lambda, \C^\times)$, we have $\varphi' \in H_{\chi_i^e}$ therefore $\Phi(H_{\chi_i}) \subseteq H_{\chi_i^e} \times T_{\Lambda''}$. Conversely for every $(\alpha, \beta) \in H_{\chi_i^e} \times T_{\Lambda''}$, we have $\Phi^{-1}(\alpha, \beta)(\chi_i) = (\alpha\oplus\beta)(\chi_i^e\oplus 0)= \alpha(\chi_i^e)\beta(0) = 1$, so the reverse inclusion also holds. %
Thus we have proven:
\begin{equation}\label{ilpunto}
    \Phi(H_{\chi_i})= H_{\ess{\chi_i}}\times T_{\Lambda''}.
\end{equation}

Now, let $\B^e = \left\{ H_{\chi_i^e} \mid \chi_i \in X \right\}$. \Cref{ilpunto} implies immediately that $\Phi(M(\B))= M(\ess{\B})\times T_{\Lambda''}$. Claim (3) follows because $\Phi$ is a homeomorphism and since $\Lambda''$ has rank $m$.

On the other hand, \Cref{ilpunto} also implies that the function $\lay
(\ess{\B})\to\lay(\B)$ given by $L\mapsto L\times T_{\Lambda''}$ is an order-preserving bijection. Since $\lay(\B)$ is a finite poset, this implies claim (2).

Claim (1) easily follows because by construction $\Lambda'$ has the same rank as the set $\{\chi_1,\ldots,\chi_n\}$.
\end{proof}

\begin{definition}[Essentialization]
The arrangement $\ess{\B}$ defined in \Cref{lem:essentialization} is called the {\em essentialization} of $\B$. If $\B$ is essential, $\B=\ess{\B}$.
\end{definition}

\begin{remark} 
Let $\B$ be a central toric arrangement and let $\ess{\B}$ be its essentialization. By \Cref{lem:essentialization}.(3), the natural projections define an isomorphism
$$
\pi_1(M(\B)) \simeq \pi_1(M(\ess{\B}))\times \Z^m.
$$ 
\end{remark}

\begin{example}[Toric braid arrangements]
\label{ex2}
 Given an integer $n>2$, the toric braid arrangement $\tbraid{n}$ is the central toric arrangement in 
 $T_{\Z^n}=(\mathbb C^\times)^n$ with set of defining characters $\Phi_n:=\{e_i-e_j \mid 1\leq i<j\leq n\}\subset \mathbb Z^n$ (here the $e_i$ denote the standard basis of $\mathbb Z^n$). 
 The hypersurface associated to the character $e_i-e_j$ has equation $z_iz_j^{-1}=1$, and so the intersection of all elements of $\tbraid{n}$ is the set $(1,1,\ldots,1)\mathbb C^\times=\{z\in (\mathbb C^\times)^n \mid z_i=z_j\,\textrm{ for }i\neq j\}$, a connected subgroup of $(\mathbb C^\times)^n$ of rank $1$. This implies  that $\tbraid{n}$ is not essential and, moreover, that $\lay(\tbraid{n})\simeq \lay(\ess{\tbraid{n}})$ has a unique maximal element. %
 In \Cref{fig_e2} we show $\ess{\tbraid{3}}$ and its poset of layers. As the defining characters of $\tbraid{n}$ are roots of the Coxeter system $A_n$, this arrangement is also called {\em type $A_n$} toric arrangement.
\end{example}

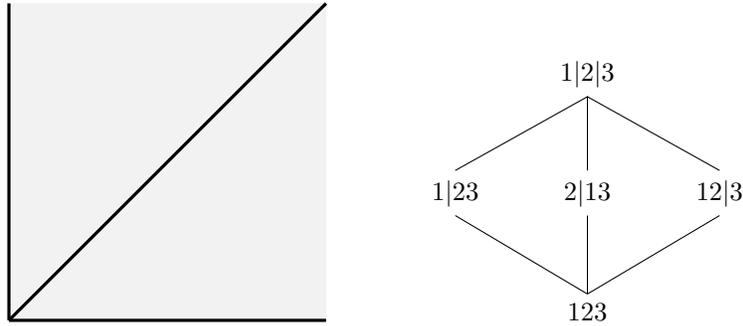
\begin{figure}[h]\label{fig2}
\centering
\begin{tikzpicture}[x=12em,y=12em]
\node[anchor=center] (A) at (0,0) {};
\node[anchor=center] (B) at (1,0) {};
\node[anchor=center] (C) at (1,1) {};
\node[anchor=center] (D) at (0,1) {};
\fill[gray!10] (A.center) -- (B.center) -- (C.center) -- (D.center) -- (A.center);
\draw[very thick] (D.center) -- (A.center);
\draw[very thick] (C.center) -- (A.center);
\draw[very thick] (A.center) -- (B.center);
\end{tikzpicture}
\quad\quad\quad
\begin{tikzpicture}[x=5em,y=4.5em]
\node[anchor=center] (O) at (0,-1) {$123$};
\node[anchor=center] (H1) at (-1,0) {$1|23$};
\node[anchor=center] (H2) at (0,0) {$2|13$};
\node[anchor=center] (H3) at (1,0) {$12|3$};
\node[anchor=center] (P) at (0,1) {$1|2|3$};
\draw (H1.north) -- (P.south) -- (H2.north);
\draw (P.south) -- (H3.north);
\draw (O.north) -- (H1.south);
\draw (O.north) -- (H2.south);
\draw (O.north) -- (H3.south);
\end{tikzpicture}
\caption{A picture of the arrangement $\ess{\tbraid{3}}$ of \Cref{ex2} and its poset of layers. One can see the correspondence with the lattice of partitions of $3$ elements: the partition with block structure $b_1|b_2|\ldots|b_k$ corresponds to the subspace given by the equations $z_i=z_j$ if there is $l$ such that $i,j\in b_l$.  }\label{fig_e2}
\end{figure}

\begin{example}[Configuration spaces] \label{ex3}
The union of the hypersurfaces in the arrangement $\tbraid{n}$ is exactly the set of all points in $(\mathbb C^\times)^n$ with distinct coordinates. Thus $M(\tbraid{n})=\Conf_n(\mathbb C^\times)$, the configuration space of $n$ points in the punctured plane $\mathbb C^\times$. Mapping a configuration of $n+1$ distinct points $\{z_1,\ldots,z_{n+1}\}\subset\mathbb C$ to the configuration $\{z_1-z_{n+1},\ldots,z_{n}-z_{n+1}\}$ defines a homeomorphism between $\Conf_{n+1}(\mathbb C)$, the configuration space of $n+1$ points in $\mathbb C$, and $\Conf_{n}(\mathbb C^\times)$. Thus the fundamental group of the arrangement $\tbraid{n}$ is the pure braid group on $n+1$ strands $\pbraid{n+1}$.
\end{example}

\subsubsection{Fiber-type and supersolvable  arrangements} 

We review some essentials of the theory of fiber-type and supersolvable toric arrangements as developed in \cite{BD24}. 

\begin{definition}[Fiber-type arrangement]
An essential toric arrangement $\B$ in $T_\Lambda$ is called {\em fiber-type} if either $\rk(\B)=1$ or there is a direct sum decomposition $\Lambda=\Lambda_0\oplus\Lambda_1$ with $\rk(\Lambda_0)=1$ such that
\begin{enumerate}
\item the arrangement $\B_1$  in $T_{\Lambda_1}$ defined by the projections of the $\chi_i$ in $\Lambda_1$ is fiber-type.
\item 
 the projection $T_{\Lambda}\to T_{\Lambda_1}$ induced by the inclusion $\Lambda_1\hookrightarrow \Lambda$ restricts to a fiber bundle $M(\B)\to M(\B_1)$.
 \end{enumerate}
\end{definition}

In our context, the interest of fiber-type arrangements stems from the following fundamental property.

\begin{proposition}{\cite[Theorem 3.4.3]{BD24}}\label{prop:polyfree.and.Kpi1}
    Let $\B$ be a fiber-type toric arrangement. Then $\B$ is  $K(\pi,1)$, and the fundamental group $\pi_1(M(\B))$ is an iterated semidirect product of free groups. 
\end{proposition}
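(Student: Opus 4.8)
The plan is to induct on the rank $d := \rk(\B)$, following the recursive structure built into the definition of fiber-type. For the base case $d=1$ the torus $T_\Lambda$ is isomorphic to $\C^\times$ and the hypersurfaces of $\B$ are finitely many points, so $M(\B)$ is $\C^\times$ with a finite set deleted. Such a space is homotopy equivalent to a finite wedge of circles, hence a $K(F,1)$ for a finitely generated free group $F$; this settles both claims when $d=1$.

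For the inductive step I would take the decomposition $\Lambda=\Lambda_0\oplus\Lambda_1$ provided by the definition together with the fiber bundle $p:M(\B)\to M(\B_1)$ onto the complement of the fiber-type arrangement $\B_1$ of rank $d-1$. First I would identify the fiber: over a point $x\in M(\B_1)$ the fiber $p^{-1}(x)$ is the intersection of $M(\B)$ with the torus $T_{\Lambda_0}\cong\C^\times$ lying over $x$, and this is $\C^\times$ with the finitely many points deleted where the hypersurfaces having nonzero $\Lambda_0$-component meet it (the hypersurfaces with trivial $\Lambda_0$-component are exactly those pulling back the arrangement $\B_1$, and they remove whole fibers over $M(\B_1)^{c}$ rather than cutting the fibers over $M(\B_1)$). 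Thus the fiber $F$ is again a punctured $\C^\times$, hence a $K(F',1)$ for a free group $F'$, as in the base case, while by induction $M(\B_1)$ is a $K(\pi,1)$ whose fundamental group $G_1$ is an iterated semidirect product of free groups.

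Next I would feed this into the long exact sequence of homotopy groups of the fibration
\[
\cdots \to \pi_n(F) \to \pi_n(M(\B)) \to \pi_n(M(\B_1)) \to \pi_{n-1}(F) \to \cdots .
\]
Since both $F$ and $M(\B_1)$ are aspherical, $\pi_n(F)=\pi_n(M(\B_1))=0$ for $n\geq 2$, which forces $\pi_n(M(\B))=0$ for all $n\geq 2$; hence $M(\B)$ is a $K(\pi,1)$, establishing the first assertion. Using that $\pi_2(M(\B_1))=0$ and that $F$ is connected, the tail of the same sequence collapses to the short exact sequence of groups
\[
1 \to \pi_1(F) \to \pi_1(M(\B)) \to G_1 \to 1,
\]
with $\pi_1(F)$ free.

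Finally I would show this extension splits, so that $\pi_1(M(\B))\cong \pi_1(F)\rtimes G_1$; combined with the iterated semidirect structure of $G_1$, this yields the second assertion. The splitting is the crux, and I expect it to be the main obstacle: an extension with free kernel need not split, so one must produce an honest section of $p$ (equivalently, of the induced map on fundamental groups). In the classical hyperplane analogue of Falk--Randell such a section is obtained by adjoining a coordinate placed ``far away'' from the defining hyperplanes, but in the toric setting the punctures move around the cylinder $T_{\Lambda_0}\cong\C^\times$ as the base point varies, so the delicate point is to exhibit a continuous section $s:M(\B_1)\to M(\B)$ avoiding these moving punctures, presumably exploiting the multiplicative group structure of the $T_{\Lambda_0}$-factor. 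Granting such a section, the induction closes and both claims follow.
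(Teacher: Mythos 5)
The paper itself offers no argument for this proposition: it is imported wholesale from \cite[Theorem 3.4.3]{BD24}, so the benchmark is the proof given there, which (in the greater generality of abelian arrangements) follows exactly the Falk--Randell-style induction you outline. Your base case and long-exact-sequence step are sound: a rank-one complement is $\C^\times$ minus finitely many points, asphericity of fiber and base kills $\pi_n(M(\B))$ for $n\geq 2$, and $\pi_2(M(\B_1))=0$ together with connectivity of the fiber collapses the sequence to the extension $1\to\pi_1(F)\to\pi_1(M(\B))\to G_1\to 1$ with finitely generated free kernel. But there is a genuine gap, and it is the one you flag yourself: you never produce the section of $p$, and without it the second assertion --- the iterated semidirect product structure --- is simply not proved, since (as you correctly observe) a group extension with free kernel need not split. ``Granting such a section'' leaves exactly the crux open.

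The gap is closable, and your instinct to exploit the multiplicative structure of the $T_{\Lambda_0}$-factor is the right one: the classical ``point far away'' trick has a direct multiplicative analogue. Fix the splitting $\Lambda=\Lambda_0\oplus\Lambda_1$, so that $T_\Lambda\cong\C^\times\times T_{\Lambda_1}$ and $p$ becomes the second projection, and write $\chi_i=a_i\oplus\chi_i^1$ with $a_i\in\Z\cong\Lambda_0$. The hypersurfaces with $a_i=0$ are unions of fibers lying over the base arrangement, so for $t_1\in M(\B_1)$ the punctures in the fiber $\C^\times$ are the solutions of $t_0^{a_i}\,\ch{(\chi_i^1)}(t_1)=b_i$ with $a_i\neq0$; every such puncture has modulus $\rho_i(t_1)=\bigl\vert b_i\,\ch{(\chi_i^1)}(t_1)^{-1}\bigr\vert^{1/a_i}$, a positive continuous function of $t_1$ (characters never vanish on the torus). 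Hence
\[
    s(t_1):=\Bigl(1+\max_{i:\,a_i\neq0}\rho_i(t_1),\ t_1\Bigr)
\]
is a continuous section of $p$: its first coordinate is a positive real number whose modulus strictly exceeds that of every puncture in the fiber over $t_1$, so $s$ takes values in $M(\B)$. This splits the extension, yields $\pi_1(M(\B))\cong\pi_1(F)\rtimes G_1$, and closes your induction. (Note that for the elliptic and general abelian arrangements treated in \cite{BD24} this modulus argument is unavailable and the cited proof must work harder; for toric arrangements, which is all this proposition needs, it suffices.)
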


In the following we explain how the fiber-type property for a toric arrangement can be ascertained by looking at the poset of layers, and how this approach gives rise to a special class of fiber-type arrangements that will be of interest.  Let us start with some basics about posets and we refer to \cite{St12} for a more comprehensive treatment of poset theory.

A {\em poset} $(P,\leq)$ is a set $P$ with a partial order relation $\leq$. We will only consider finite posets, i.e., $\vert P \vert <\infty$. If no misunderstanding can arise, we omit explicit mention of $\leq$ and just refer to ``the poset $P$''. An {\em order ideal} in $P$ is any $I\subseteq P$ such that $x\in I$ and $y\leq x$ implies $y\in I$. 
A {\em chain} in $P$ is any totally ordered subset $x_0 < x_2 <\ldots< x_n$ with $x_i\in P$ for all $i=0,\ldots,n$, and the length of this chain is $n$, i.e., one less than its cardinality. The length of a finite poset $P$ is the maximum of the length of any chain in $P$. We say that a poset $P$ is {\em pure} if all maximal chains have the same length. We say that $P$ is {\em bounded below} if there is a unique $\hat{0}\in P$ with $x\geq \hat{0}$ for all $x\in P$. %
If $P$ is bounded below, the {\em atoms} of $P$ are the elements $a\in P$ such that $\hat{0}\lessdot a$ (recall that $x\lessdot y$ means that $x<y$ and that $x\leq z < y$ implies $x=z$). We write $A(P)$ for the set of atoms of $P$.

Given $x,y\in P$, we write $x\vee y$ for the {\em set} of minimal upper bounds of $x$ and $y$, and $x\wedge y$ for the set of maximal lower bounds of $x$ and $y$. A poset $P$ is a {\em lattice} if, for all $x,y\in P$, $\vert x \vee y \vert = \vert x\wedge y \vert = 1$.

\begin{definition}[Supersolvable posets of layers]
\label{def:supersolvable}
    Let $\lay$ be the poset of layers of a toric arrangement. An {\em M-ideal} of $\lay$ is a pure, join-closed order ideal $\mathcal I\subseteq \lay$ such that for any two distinct $a_1,a_2\in A(\lay)\setminus A(\mathcal I)$ and every $x\in a_1\vee a_2$ There is $a_3\in A(\mathcal I)$ such that $x>a_3$. An M-ideal $\mathcal I$ is a TM-ideal if, moreover, $\vert y\vee a\vert = 1$ for every $y\in \mathcal I$ and every $a\in A(\lay) \setminus A(\mathcal I)$.

    The poset $\lay$ is {\em supersolvable} if there is a chain of M-ideals $\{\hat{0}\} \subseteq \mathcal I_1\subseteq \ldots \subseteq \mathcal I_d=\lay$ such that $\mathcal I_i$ has length $i$ for all $i$. If, moreover, all $\mathcal I_i$ are TM-ideals, then $\lay$ is called {\em strictly supersolvable}. 
    A toric arrangement is called {\em supersolvable}, resp.\ {\em strictly supersolvable} if its poset of layers is.
\end{definition}

\begin{remark}{\cite[Proposition 5.1.9]{BD24}}
\label{rem:sslattice}
    If $\lay(\B)$ is a lattice, the notions of supersolvability and strict supersolvability of  \Cref{def:supersolvable} are equivalent to each other and to the classical notion of supersolvability for finite lattices \cite{St72}.
\end{remark}

\begin{example}\label{ex2.1}
    The braid toric arrangement $\tbraid{n}$ of \Cref{ex2} is strictly supersolvable for all $n$. This follows from \Cref{rem:sslattice} since it is known \cite[Example 3.1.3]{BD24} that the poset of layers of $\tbraid{n}$ is isomorphic to the geometric lattice $\Pi_n$ of all partitions of $\{1,\ldots,n\}$ ordered by refinement, and $\Pi_n$ is supersolvable in the classical sense \cite{St72}.
\end{example}

\begin{proposition}\label{prop:strictly.supersolvable.quadratic}
    An essential toric arrangement $\B$ is fiber-type if and only if $\lay(\B)$ is supersolvable. Moreover, if $\lay(\B)$ is {\em strictly} supersolvable, the cohomology algebra $H^*(M(\B),\mathbb Z)$ is quadratic. 
\end{proposition}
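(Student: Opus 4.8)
The plan is to prove the two assertions separately, treating the biconditional as essentially known and concentrating the real work on quadraticity.

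For the equivalence ``fiber-type $\iff$ $\lay(\B)$ supersolvable'' I would invoke the combinatorial characterization of \cite{BD24}, of which \Cref{prop:polyfree.and.Kpi1} and \Cref{def:supersolvable} are companion results. The content I would want to recall, to make the citation transparent, is the dictionary between the two structures: a direct sum decomposition $\Lambda=\Lambda_0\oplus\Lambda_1$ with $\rk\Lambda_0=1$ realizing the fiber bundle $M(\B)\to M(\B_1)$ corresponds, at the level of posets, to exhibiting $\lay(\B_1)$ as an M-ideal of $\lay(\B)$: the atoms of the ideal are the layers surviving the projection to $T_{\Lambda_1}$, while the atoms outside the ideal index the ``fiber'' hypersurfaces. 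Iterating a fiber-type decomposition produces exactly a chain of M-ideals of increasing length -- the defining data of supersolvability -- and conversely such a chain reconstructs the tower of fibrations. I would therefore record the equivalence as \cite{BD24} and devote the argument to the second claim.

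For quadraticity under strict supersolvability I would argue by induction on $\rk(\B)$, combining the tower of fiber bundles with the explicit presentation of $H^\bullet(M(\B),\Z)$ from \cite{CDDMP20}. The base case $\rk(\B)=1$ is immediate: $M(\B)$ is $\C^\times$ with finitely many points deleted, hence homotopy equivalent to a wedge of circles, so all products of positive-degree classes vanish and $H^\bullet(M(\B),\Z)\cong T^\bullet(H^1)/(H^1\otimes H^1)$ is quadratic. For the inductive step, the longest TM-ideal $\mathcal I_{d-1}$ in the chain supplies a decomposition $\Lambda=\Lambda_0\oplus\Lambda_1$ and a fiber bundle $p\colon M(\B)\to M(\B_1)$ whose base is the complement of a strictly supersolvable arrangement of rank $d-1$ (so $H^\bullet(M(\B_1),\Z)$ is quadratic by hypothesis) and whose fiber $F$ is again a punctured $\C^\times$. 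Because the bundle is fiber-type the associated Leray--Hirsch/Serre spectral sequence degenerates and $H^\bullet(M(\B),\Z)$ is a free $H^\bullet(M(\B_1),\Z)$-module on the degree-$0$ and degree-$1$ classes of $F$; consequently it is generated in degree $1$ by the pullback of the base generators together with one new class $e_a$ for each atom $a\in A(\lay(\B))\setminus A(\lay(\B_1))$.

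The crux is then to show that the relation ideal is generated in degree $2$. Reading off the \cite{CDDMP20} presentation, the relations touching a new generator $e_a$ are of ``Orlik--Solomon'' type (among hypersurfaces over a common layer) and of ``congruence'' type (governing the product of $e_a$ with base classes). Here the strictness hypothesis enters decisively: the TM-ideal condition $\vert y\vee a\vert=1$ for all $y\in\mathcal I_{d-1}$ and all exterior atoms $a$ forces each such product to be supported on a single layer, so the congruence relations are all quadratic and no higher syzygies arise. Hence the full ideal is quadratic and $H^\bullet(M(\B),\Z)$ is quadratic. The main obstacle I anticipate is exactly this verification: without strictness an exterior atom may admit several minimal upper bounds with a base layer, and the resulting terms of the presentation yield relations in degree $\geq 3$ that are not consequences of quadratic ones -- the same mechanism behind the failure of degree-one generation for merely supersolvable arrangements mentioned in the introduction. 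Making the bookkeeping of the \cite{CDDMP20} relations precise, and checking compatibility of the degenerate spectral sequence with the ring structure so that the module generators can genuinely be chosen in degrees $0$ and $1$, are the two technical points needing care; the remainder is a routine induction.
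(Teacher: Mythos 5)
Your proposal contains a genuine gap at the Leray--Hirsch step. You assert that ``because the bundle is fiber-type the associated Leray--Hirsch/Serre spectral sequence degenerates and $H^\bul(M(\B),\Z)$ is a free $H^\bul(M(\B_1),\Z)$-module on the degree-$0$ and degree-$1$ classes of $F$.'' Fiber-type alone does not give this. The hypothesis of Leray--Hirsch is that the restriction $H^1(M(\B),\Z)\to H^1(F,\Z)$ be surjective, and for these bundles that holds precisely when the monodromy of $\pi_1(M(\B_1))$ acts trivially on $H_1(F,\Z)$, i.e.\ when the semidirect product $\pi_1(F)\rtimes\pi_1(M(\B_1))$ is an \emph{almost-direct} product --- and this is exactly where strictness must enter, not only in the degree-$2$ relations as you claim. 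The paper's own example $(\tbraid{3}^e)^U$ (\Cref{exlift}, \Cref{fig3}) refutes your intermediate assertion: it is supersolvable, hence fiber-type, yet its Poincar\'e polynomial $1+6t+21t^2$ is irreducible over $\Q$, so its cohomology is not even additively a tensor product of base and fiber cohomology, and indeed it is not generated in degree one --- the very failure that \Cref{th:p.cover.cohomology.not.one.generated} exploits. As written, your induction would ``prove'' degree-one generation for all fiber-type arrangements, which is false; to repair it you must first deduce triviality of the monodromy on $H_1(F)$ from the TM-ideal chain before invoking Leray--Hirsch.

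Once repaired, your skeleton essentially coincides with the paper's, transposed to the space level: the paper expresses $G(\B)$ as an iterated almost-direct product of free groups (via \cite{BD24}), uses \cite{FR85} to split the abelianization and obtain surjectivity of $H^1(G,\Z)\to H^1(F_{n_d},\Z)$, and then applies exactly the Leray--Hirsch theorem of \cite{Hat02} to get $H^\bul(G,\Z)\cong H^\bul(F_{n_d})\otimes H^\bul(\overline{G},\Z)$, concluding degree-one generation by the inductive strategy of \cite{Coh10}; the first claim of the proposition is cited to \cite{BD24} just as you do. Your remaining program --- extracting degree-$2$ generation of the relation ideal from the \cite{CDDMP20} presentation via the TM-condition $\vert y\vee a\vert=1$ --- is not what the paper does (it leans on Cohen's argument for the ring structure) and stays an unexecuted sketch on your side; since you yourself identify that bookkeeping, together with the multiplicative compatibility of the module decomposition, as the crux, it cannot be waved through.
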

\begin{proof}
The first claim is proven in \cite[Theorem 3.4.3]{BD24} for the more general case of abelian arrangements. Moreover, the fundamental group $G(\B)$ of a supersolvable arrangement $\B$ can be expressed as an iterated semidirect product of free groups of finite rank by \cite[Corollary 5.3.4]{BD24}. Recall that a semidirect produt $G = N \rtimes H$ is said to be an \emph{almost-direct product} if the natural conjugation action of $H$ on $N$ induces a trivial action on the abelianization $N_{\rm ab}$.

Now, if $\lay(\B)$ is \emph{strictly} supersolvable, then then the fundamental group $G(\B)$ can be expressed as an iterated almost-direct product of free groups of finite rank. The cohomology of such groups has been investigated by Cohen in \cite{Coh10}, and in order to prove that $H^\bul(G,\Z)$ is generated in degree $1$  we give an adaptation of the argument of \cite[Theorem 3.1]{Coh10} to the case of integer coefficients.

Let $G$ be an iterated almost-direct product of free groups of finite rank, say $G = F_{n_d} \rtimes (F_{n_{d - 1 }} \rtimes \cdots \rtimes F_{n_1}) = F_{n_d} \rtimes \overline{G}$ with $F_{n_i}$ free of rank $n_i$. The fact that $G$ is an almost-direct product implies that the conjugation action of  $\overline{G}$ on the free normal subgroup $F_{n_d}$ induces a trivial action on the abelianization $(F_{n_d})_{\rm ab}$. With these hypotheses we know from \cite[Theorem 3.1]{FR85} that $G_{\rm ab} \cong (F_{n_d})_{\rm ab} \oplus \overline{G}_{\rm ab} \cong \Z^{n_d} \oplus \overline{G}_{\rm ab}$. Since the abelianization of such a group is torsion-free, we have $H^1(G, \Z) \cong G_{\rm ab}$, so we find $H^1(G, \Z) \cong H^1(F_{n_d}, \Z) \oplus H^1(\overline{G}, \Z)$ as abelian groups. 
Now, it is clear that the inclusion map $F_{n_d} \hookrightarrow G$ induces a surjective homomorphism $H^1(G, \Z) \to H^1(F_{n_d}, \Z)$, and since $H^k(F_{n_d}, \Z) = 0$ for $k > 1$ this map induces a surjective map for the cohomology rings $H^\bul(G, \Z) \to H^\bul(F_{n_d}, \Z)$. So we can apply the version of the Leray-Hirsch theorem given in \cite[Theorem 4D.1]{Hat02} to see that as algebras over $\Z$ we have an isomorphism $H^\bul(G, \Z) \cong H^\bul(F_{n_d}) \otimes H^\bul(\overline{G}, \Z)$. One can then use the same strategy as in \cite[Theorem 3.1]{Coh10} to prove inductively that $H^\bul(G, \Z)$ is generated in degree $1$.
In particular, since for any supersolvable toric arrangement $\B$ the complement space $M(\B)$ is a $K(G(\B), 1)$-space (see \cite[Corollary 3.4.4]{BD24}), the cohomology $H^\bul(M(\B), \Z) \cong H^\bul(G(\B), \Z)$ is quadratic as well.

\end{proof}

\begin{example}\label{PBKpi1}
    From \Cref{ex2,ex2.1} we see that $\tbraid{n}$ is fiber-type, indeed supersolvable. Via \Cref{ex3}, this implies that $M(\tbraid{n})$ is a $K(\pbraid{n+1},1)$ space. Moreover \Cref{lem:essentialization}.(2) {together with \Cref{prop:strictly.supersolvable.quadratic}} show that $\ess{\B_n}$ is supersolvable as well.
\end{example}

\begin{remark}\label{rem:mcg} Let $n\geq 3$ and consider the arrangement $\tbraid{n}$. By \Cref{lem:essentialization}.(3), there is an isomorphism
$$
\phi:\quad 
\pi_1(M(\ess{\B_n}))\times \Z
\longrightarrow
\pi_1(M(\B_n))
=
\pbraid{n+1}.
$$

It is known \cite[\S 9.2]{Farb} that the center of $\pbraid{n+1}$ is infinite cyclic. Let $1 \times \Z$ be the infinite cyclic factor of $G(\ess{\B_n}) \times \Z$ and denote with $(1, c)$ its generator. %
We claim that $\phi(1 \times \Z) = Z(\pbraid{n+1})$, i.e., $1\times \Z$ equals the center of $G(\ess{\B_n}) \times \Z$. \medskip \\
In order to prove this, suppose that there exists a central element $(g, c^h)$ in $G(\ess{\B_n}) \times \Z$ with $g \in G(\ess{\B_n})$ nontrivial. Then also $(g, 1)$ will be central as we can write it as a product of central elements: $(g, 1) = (g, c^h)(1, c^{-h})$. 
In particular, since the center of $G(\ess{\B_n})\times\Z$ is infinite cyclic, both $(1, c)$ and $(g, 1)$ must be power of the same generator, so there exists integers $a, b \in \Z$ such that $(g^a, 1) = (g, 1)^a = (1, c)^b \in 1 \times \Z$. However since the intersection $(G(\ess{\B_n}) \times 1) \cap (1 \times \Z)$ is trivial, we must have $g^a = 1$, i.e. the central element $(g, 1)$ must have torsion, contradicting the fact that the center of $G(\ess{\B_n})\times \Z$ is infinite cyclic and hence is torsion-free. 
This shows $\phi(1\times \Z)=Z(\pbraid{n+1})$ and we obtain an isomorphism
$$
  \pi_1(M(\ess{\B_n})) 
  \cong \pbraid{n+1}/Z(\pbraid{n+1})
  \cong \mcg{n+2}
$$
of $\pi_1(M(\ess{\B_n}))$ with $\mcg{n+2}$, the pure mapping class group of the $(n+2)$-fold punctured two-dimensional sphere, where the last isomorphism  uses the known isomorphism $\pbraid{n+1}/Z(\pbraid{n+1}) \simeq \mcg{n+2}$, see, e.g., \cite[\S 9.3, p.~252]{Farb}.
\end{remark}

\section{Finite covers of toric arrangements}\label{sec:covers}

\subsection{Lifting arrangements}\label{sec:lifting}

As before, let $\Lambda$ be a free abelian group of rank $d$ and let $\iota: \Lambda\hookrightarrow\wt\Lambda$ be an inclusion into another free abelian group $\wt\Lambda$ of the same rank.
By fixing $\Z$-bases $e_1, \dots, e_d$ and $\tilde e_1, \dots, \tilde e_d$ for $\Lambda$ and $\wt\Lambda$ respectively, we can consider the matrix $M = (m_{i, j})_{i=1}^d \in \mathbb Z^{d\times d}$ with integer coefficients representing $\iota$:
\[
    \iota(e_j) = \sum_{i=1}^d m_{i,j}\tilde e_i
\]
for all $1 \leq j \leq d$. The quotient $\wt\Lambda/\Lambda$ is then isomorphic to $\Z^d/{\rm im}(M)$ and by putting the matrix $M$ in Smith normal form, we can write ${\rm im} (M) \cong n_1\Z \oplus \cdots \oplus n_d\Z$ for some integers $n_1, \dots, n_d$ satisfying $n_1 \cdots n_d = |\det(M)|$. So $\wt\Lambda/\Lambda$ is a finite abelian group of order $|\det(M)|$ and we have a short exact sequence of abelian groups:
\[\begin{tikzcd}
    \Lambda \arrow[r, "\iota", hook] & \wt\Lambda \arrow[r, two heads] & \wt\Lambda/\Lambda.
\end{tikzcd}\]  
Applying the contravariant functor $ A \mapsto T_A = \Hom(A, \C^\times)$ we obtain a surjective homomorphism  $f = \iota^* : T_{\wt\Lambda} \to T_{\Lambda}$ between abstract tori, with $\ker(f)=\Hom(\wt\Lambda/\Lambda, \C^\times) \cong \wt\Lambda/\Lambda$. 
By general facts (see, e.g., \cite[Example 4.4.5]{BD24}), one obtains that $f$ is indeed a covering map whose group $\G$ of deck transformations is of order $\vert \det(M)\vert$, indeed isomorphic to $\wt\Lambda/\Lambda$.

\begin{definition}\label{def:lifted.arrangement}
Let $\B$ be a centered toric arrangement in $T:=T_\Lambda$ with set of defining characters $X = \{\chi_1, \dots, \chi_n\}$. 
For $i=1,\ldots,n$ let $H_i^U:=f^{-1}(H_i)$ be the lift of $H_i$ to $U:=T_{\wt\Lambda}$.
For each $q \in f^{-1}(H_i)$ we let $H_{i, q}^U$ denote the connected component of $H_i^U$ containing $q$. 
The arrangement lifted from $\B$ is defined as:
\[
    \B^U := \{ H_{i, q}^U \mid 1 \leq i \leq n; \, q \in f^{-1}(H_i)\} = \bigcup_{H_i \in \B} \pi_0(f^{-1}(H_i)),
\]
\noindent
where $\pi_0$ denotes the set of connected components.
Moreover, let
$$
\B^U_c:=\{H_1^U,\ldots, H_n^U\}
$$
be the central arrangement whose elements are the lifts of the elements of $\B$.
\end{definition}

\begin{remark} Obviously $\lay(\B^U)=\lay(\B^U_c)$ and $M(\B^U)=M(\B^U_c)$.
\end{remark}

\begin{remark}[cf.\ Corollary 4.4.13 of {\cite{BD24}}]
\label{BUss}
The poset $\lay(\B)$ is supersolvable if and only if $\lay(\B^U)$ is supersolvable. However, if $\lay(\B)$ is strictly supersolvable then supersolvability of $\lay(\B^U)$ is not necessarily strict.
\end{remark}

\begin{remark}[Defining characters for $\B^U_c$]\label{rem:defining.character.B^U_c}
    Since $H_i^U = \ker (\ch{\chi_i}\circ f)$, the arrangement $\B^U_c$ is central with defining characters $\ch{\hat{\chi_i}}:=\ch{\chi_i}\circ f$ with $\hat{\chi}_i = \iota(\chi_i) \in \wt\Lambda$.
\end{remark}

\begin{remark}[Defining characters for $\B^U$]\label{rem:defining.character.B^U}
The arrangement $\B^U$ is not necessarily central, as the $H_i^U$ are not necessarily connected. 
More precisely, if $a_i := |\pi_0(H_i^U)|$ is the number of connected components of $H^U_i$, then $a_i$ is the index of $\langle \hat{\chi}_i \rangle_{\mathbb Z}$ in $\wt\Lambda$. Thus we can pick primitive characters $\chi_i^U\in \wt\Lambda$ given by 
\[
    \chi_i^U 
    := \frac{\hat \chi_i}{a_i}  \in \wt\Lambda
\]
so that for every $q\in H_i^U$ we have
$$
H_{i,q}^U = H_{(\chi_i^U,b_{i,q})}
$$
(see \Cref{def:lifted.arrangement} and \S\ref{sec:general}) where $b_{i,q}:=\ch{(\chi_i^U)}(q)$ may be different from $1$.
\end{remark}

\begin{example}[Lifting $\tbraid{3}$]\label{exlift} Let $U$ be the covering space of $T \cong (\C^\times)^2$ defined as above
by the injection $\mathbb Z^2 \to \mathbb Z^2$, $z\mapsto Az$ where $$A := \left(\begin{matrix}
        2 & 1 \\ 0 & -4
    \end{matrix}\right).$$

We compute the lift of $(\tbraid{3}^e)^U$. A quick computation based on \Cref{ex2} and \Cref{lem:essentialization} shows that the set of defining characters of $\tbraid{3}^e$ can be chosen as $X = \{\chi_1 = (1, 0), \chi_2 = (0, 1), \chi_3 = (1, 1)\}$, corresponding to the hypersurfaces:
    \begin{align*}
        H_1 &:= \{ (t_1, t_2) \in T \mid t_1 = 1 \}, \\
        H_2 &:= \{ (t_1, t_2) \in T \mid t_2 = 1\}, \\
        H_3 &:= \{ (t_1, t_2) \in T \mid t_1t_2 = 1 \}.
    \end{align*}

    The lifts via the covering map corresponding to the 
    matrix $A$ are then given by
    \begin{align*}
        H_1^U &:= \{ (u_1, u_2) \in U \mid u_1^2 = 1 \}, \\
        H_2^U &:= \{ (u_1, u_2) \in U \mid u_1u_2^{- 4} = 1\}, \\
        H_3^U &:= \{ (u_1, u_2) \in U \mid u_1^3u_2^{-4} = 1 \}.
    \end{align*}    
Notice that $H_1^U$ will have two distinct connected components: $H^U_{1, x}$ and $H^U_{1, y}$ where $x = (1, 1)$ and $y = (-1, 1)$. Thus we have $(\tbraid{3}^e)^U = \{H_{1,x}^U,\, H_{1, y}^U,\, H_2^U,\, H_3^U,\}$, with set of defining characters $$X^U = \{ \chi_1^U = \frac 12(2, 0),\, \chi_2^U = (1, -4),\, \chi_3^U = (3, -4) \}$$
    (recall that by \Cref{rem:defining.character.B^U} we have $\chi_i^U= \frac{1}{a_i}\hat\chi_i = \frac{1}{a_i}A\chi_i$ where $a_i$ is the number of connected components of $H_i^U$).
    \Cref{fig3} depicts the covering of $\tbraid{3}^e$ by $(\tbraid{3}^e)^U$ and the poset of layers of $(\tbraid{3}^e)^U$. From it we see that while $(\tbraid{3}^e)^U$ is supersolvable like $\tbraid{3}^e$, the former is no longer strictly supersolvable. Indeed since $M$-ideals must be join-closed, any $M$ ideal must miss at least one among $H^U_2$ and $H^U_3$. But the intersection of either of those with any other hypersurface is disconnected.

\begin{figure}[h] %
\centering
\begin{tikzpicture}[x=15em,y=15em]

    \node[anchor=center] (A2) at (0, 1) {};
    \node[anchor=center] (B2) at (0.5, 1) {};
    \node[anchor=center] (C2) at (0.5, 1.5) {};
    \node[anchor=center] (D2) at (0, 1.5) {};

\fill[gray!10] (A2.center) -- (B2.center) -- (C2.center) -- (D2.center) -- (A2.center);
\begin{scope}
    \draw[very thick] (A2.center) -- (D2.center);
    \draw[very thick] (A2.center)++(0.25,0) -- (0.25, 1.5);
    \clip (A2.center) -- (B2.center) -- (C2.center) -- (D2.center) -- (A2.center);
    \draw[thick] (A2.center) --+ (2, 0.5);
    \draw[thick] (A2.center)++(0,0.25) --+ (2, 0.5);
    \draw[thick] (A2.center)++(0,0.125) --+ (2, 0.5);
    \draw[thick] (A2.center)++(0,0.375) --+ (2, 0.5);    
    
    \draw[] (A2.center) --+ (2, 1.5);
    \draw[] (A2.center)++(-0.5, 0) --+ (2, 1.5);    
    \draw[] (A2.center)++(-0.333, 0) --+ (2, 1.5);    
    \draw[] (A2.center)++(-0.166, 0) --+ (2, 1.5);
    \draw[] (A2.center)++(0.333, 0) --+ (2, 1.5);
    \draw[] (A2.center)++(0.166, 0) --+ (2, 1.5);
\end{scope}

    \node[anchor=center] (A1) at (0, 0) {};
    \node[anchor=center] (B1) at (0.5, 0) {};
    \node[anchor=center] (C1) at (0.5, 0.5) {};
    \node[anchor=center] (D1) at (0, 0.5) {};
    
\fill[gray!10] (A1.center) -- (B1.center) -- (C1.center) -- (D1.center) -- (A1.center);

    \draw[very thick] (D1.center) -- (A1.center);
    \draw[] (C1.center) -- (A1.center);
    \draw[thick] (A1.center) -- (B1.center);
\node[rotate=-90, scale=2] (arrow) at (0.25, 0.75) {$\twoheadrightarrow$};
\end{tikzpicture}
\quad
\begin{tikzpicture}[x=5em,y=4.5em]
\node[anchor=center] (O) at (0,-1) {$U$};
\node[anchor=center] (H2) at (-1.5,0) {$H_2^U$};
\node[anchor=center] (H1x) at (-0.5,0) {$H_{1, x}^U$};
\node[anchor=center] (H1y) at (0.5,0) {$H_{1, y}^U$};
\node[anchor=center] (H3) at (1.5,0) {$H_3^U$};
\node[anchor=center] (P) at (-2,1.25) {$P$};
\node[anchor=center] (Q) at (-1.5,1.25) {$Q$};
\node[anchor=center] (R) at (-1,1.25) {$R$};
\node[anchor=center] (S) at (-.5,1.25) {$S$};
\node[anchor=center] (P') at (.5,1.25) {$P'$};
\node[anchor=center] (Q') at (1,1.25) {$Q'$};
\node[anchor=center] (R') at (1.5,1.25) {$R'$};
\node[anchor=center] (S') at (2,1.25) {$S'$};

\draw (H2.north) -- (P.south);
\draw (H2.north) -- (Q.south);
\draw (H2.north) -- (R.south);
\draw (H2.north) -- (S.south);
\draw (H2.north) -- (P'.south);
\draw (H2.north) -- (Q'.south);
\draw (H2.north) -- (R'.south);
\draw (H2.north) -- (S'.south);

\draw (H3.north) -- (P.south);
\draw (H3.north) -- (Q.south);
\draw (H3.north) -- (R.south);
\draw (H3.north) -- (S.south);
\draw (H3.north) -- (P'.south);
\draw (H3.north) -- (Q'.south);
\draw (H3.north) -- (R'.south);
\draw (H3.north) -- (S'.south);

\draw (H1x.north) -- (P.south);
\draw (H1x.north) -- (Q.south);
\draw (H1x.north) -- (R.south);
\draw (H1x.north) -- (S.south);

\draw (H1y.north) -- (P'.south);
\draw (H1y.north) -- (Q'.south);
\draw (H1y.north) -- (R'.south);
\draw (H1y.north) -- (S'.south);

\draw (O.north) -- (H1x.south);
\draw (O.north) -- (H1y.south);
\draw (O.north) -- (H2.south);
\draw (O.north) -- (H3.south);

\node (bottom) at (0,-2) {};
\end{tikzpicture}

\caption{The lift $(\tbraid{3}^e)^U$ of $\tbraid{3}^e$ from \Cref{exlift} with the poset of layers $\lay((\tbraid{3}^e)^U)$. For later reference we note that the M\"obius function of the poset has value $1$ at the bottom element and $-1$, resp. $2$ at each element of rank $1$, resp.~$2$. Thus the characteristic polynomial of the poset is $q(t)=t^2 - 4t + 16$ and so the Poincaré polynomial of the complement $M((\tbraid{3}^e)^U)$ is 
$t^2q(-\frac{t+1}{t})=21t^2 + 6t + 1$ (see, e.g., \cite[Corollary 5.12]{Moc12})  
}\label{fig3}
\end{figure}

\end{example}

\subsection{Primitive arrangements}

\begin{definition}[Primitive elements and primitive arrangements]\label{def:primitive.vector}
    An element $v$ in a free abelian group of finite rank $\Lambda$ is \emph{primitive} if it is not a nontrivial multiple of another element, that is if $v$ cannot be written in the form $c w$ for some integer $c$ with $\vert c\vert > 1$ and another element $w \in \Lambda$. 
    \noindent
    \label{def:primitive.arrangement} An arrangement $\B$ is \emph{primitive} if each defining character $\chi$ is primitive in the character group $\Lambda$.
\end{definition}

\begin{remark}\label{rem:connected.H}
A hypertorus $H = H_{(\chi, b)}$ of $T_\Lambda$ is connected if and only if the corresponding character $\chi$ is a primitive element in $\Lambda$.
\end{remark}

\noindent
In the standard lattice $\Z^d$, a vector $v = (v_1, \dots, v_d)$ is primitive if and only if $\gcd(v_1, \dots, v_d) = 1$. Note that the property of being a primitive arrangement is not always preserved under lifting by covering maps. 

\begin{remark} In the context of \Cref{def:lifted.arrangement}, if $\B$ is a primitive toric arrangement then $\B^U$ is primitive, but $\B^U_c$ is not necessarily primitive (see \Cref{exlift}).
\end{remark}

\begin{definition}
	Let $\B$ be a primitive toric arrangement in a torus $T=T_\Lambda$ 
    and let $p$ be a prime number. We will say that $\B$ admits a \emph{primitive $p$-cover} if there exists a finite covering map $f : U \to T$ of degree a power of $p$ such that the lifted arrangement $\B^U_{c}$ is primitive. We will say that the cover $f:U\to T$ is {\em primitive respect to $\B$}.

\end{definition}

\begin{remark} A cover $f:U\to T$ is primitive with respect to $\B$ if and only if $\B^U=\B^U_c$.
\end{remark}

\noindent
The following is a characterization of the existence of primitive $p$-covers.

\begin{theorem}\label{prop:existence.primitive.p.cover}
	Let $\B = \{H_1, \dots, H_n\}$ be a primitive toric arrangement in $T = T_{\mathbb Z^d}$ 
    and let $X = \{\chi_1, \dots, \chi_n\} \subset \Z^d$ be the defining characters, written in coordinates as:
	\[
		\chi_i = (a_1^{\chi_i}, \dots, a_d^{\chi_i})\in \mathbb Z^d.
	\]
\noindent
Fix a prime number $p$. For any $\chi \in \Z^d$ we will denote by $\overline \chi \in \F_p^d$ the componentwise reduction modulo $p$.
Consider the map $$\phi_p : X \to \mathbb P^1(\F_p^d)$$ that assigns to each character $\chi_i$ the $\F_p$-span of $\overline{\chi}$, i.e., the point $[\overline{\chi_i}] = 
\{ \lambda \overline \chi_i \mid \lambda \in \F_p^\times \} 
\in \mathbb{P}^1(\F_p^d)$.
\noindent
Then $\B$ admits a primitive $p$-cover if and only if $\phi_p$ is not surjective. Moreover such cover can be chosen to have degree precisely $p$.
\end{theorem}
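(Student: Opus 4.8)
The plan is to translate the statement into the language of overlattices, which by \S\ref{sec:lifting} parametrize the covers in question. A degree-$p^k$ connected cover $f:U\to T$ by a torus corresponds to an overlattice $\Z^d\subseteq\wt\Lambda$ with $[\wt\Lambda:\Z^d]=p^k$, and by the discussion of defining characters for $\B^U_c$ the lifted central arrangement has defining characters $\hat\chi_i=\iota(\chi_i)$, so it is primitive exactly when each $\chi_i$ is primitive in $\wt\Lambda$. The task thus becomes: for which $p$ can one enlarge $\Z^d$ to a lattice of $p$-power index keeping all the $\chi_i$ primitive? The conceptual core is a reduction lemma isolating the prime $p$: for any $p$-power overlattice $\wt\Lambda\supseteq\Z^d$ and any $\chi$ primitive in $\Z^d$, the element $\chi$ is primitive in $\wt\Lambda$ if and only if $\chi\notin p\wt\Lambda$, equivalently $\tfrac{\chi}{p}\notin\wt\Lambda$. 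Indeed, if a prime $q\neq p$ divided $\chi$ in $\wt\Lambda$, say $\chi=q\eta$ with $\eta\in\wt\Lambda$, then reducing modulo $\Z^d$ and using that multiplication by $q$ is invertible on the $p$-group $\wt\Lambda/\Z^d$ forces $\eta\in\Z^d$, contradicting primitivity of $\chi$ in $\Z^d$; hence only $p$ can obstruct primitivity. Note also that since $\B$ is primitive, every $\overline{\chi_i}$ is nonzero in $\F_p^d\cong\Z^d/p\Z^d$, so $\phi_p$ is genuinely valued in $\mathbb{P}^1(\F_p^d)$.

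For the ``if'' direction I would construct the cover explicitly and of degree exactly $p$. Given a line $\ell=[\overline v]\in\mathbb{P}^1(\F_p^d)$ outside the image of $\phi_p$, with $v\in\Z^d$ a lift (so $v\notin p\Z^d$), set $\wt\Lambda_\ell:=\Z^d+\Z\tfrac{v}{p}$; this is an overlattice with $[\wt\Lambda_\ell:\Z^d]=p$. A direct computation shows $\tfrac{\chi_i}{p}\in\wt\Lambda_\ell$ if and only if $\overline{\chi_i}\in\langle\overline v\rangle=\ell$, i.e. if and only if $\phi_p(\chi_i)=\ell$. As $\ell$ is not in the image of $\phi_p$, the reduction lemma gives that every $\chi_i$ stays primitive in $\wt\Lambda_\ell$, so $\B^U_c$ is primitive and $\wt\Lambda_\ell$ furnishes a primitive $p$-cover of degree $p$. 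This simultaneously settles the final ``moreover'' claim.

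For the ``only if'' direction I would argue contrapositively, producing from any nontrivial primitive $p$-cover a line missed by $\phi_p$. Given such a cover $\wt\Lambda\supsetneq\Z^d$, the nontrivial finite $p$-group $\wt\Lambda/\Z^d$ contains an element of order $p$, i.e. there is $v\in\wt\Lambda\setminus\Z^d$ with $u:=pv\in\Z^d$; here $u\notin p\Z^d$ (otherwise $v\in\Z^d$), so $\ell:=[\overline u]\in\mathbb{P}^1(\F_p^d)$ is well defined. If $\ell$ equaled $\phi_p(\chi_i)=[\overline{\chi_i}]$ for some $i$, then $\chi_i\equiv\lambda u\pmod{p\Z^d}$ for some $\lambda\in\{1,\dots,p-1\}$, whence $\chi_i=\lambda u+pw=p(\lambda v+w)$ with $\lambda v+w\in\wt\Lambda$ nonzero; this exhibits $\chi_i$ as $p$ times an element of $\wt\Lambda$, contradicting primitivity of the cover. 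Hence $\ell\notin\operatorname{im}\phi_p$ and $\phi_p$ is not surjective.

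I expect the reduction lemma to be the main obstacle, as it is exactly what makes the argument work: verifying that under a $p$-power cover no prime other than $p$ can spoil primitivity is what collapses the \emph{a priori} infinite search over all $p$-power overlattices to the finite, purely mod-$p$ condition encoded by $\phi_p$. Once this is in place, the only remaining care is the routine bookkeeping identifying degree-$p$ overlattices $\Z^d+\Z\tfrac{v}{p}$ with lines in $\F_p^d$, together with the (harmless) observation that in the converse one must exclude the trivial degree-$1$ cover, consistent with the ``$p$-sheeted'' reading of a primitive $p$-cover.
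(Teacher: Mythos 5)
Your proposal is correct and is essentially the paper's own argument in coordinate-free form: your overlattice $\wt\Lambda_\ell=\Z^d+\Z\tfrac{v}{p}$ is precisely the paper's degree-$p$ cover given by the matrix $\operatorname{diag}(p,1,\dots,1)$ after a change of basis taking $v$ to $e_1$, and the order-$p$ element of $\wt\Lambda/\Z^d$ in your converse is the paper's nonzero vector in $\ker(\overline{M})$. The only difference is packaging: your reduction lemma (only the prime $p$ can spoil primitivity under a $p$-power cover) is exactly what the paper's factorization $\gcd(p^{k+1}b,a_2^\chi,\dots,a_d^\chi)=\gcd(p^{k+1},a_2^\chi,\dots,a_d^\chi)\cdot\gcd(b,a_2^\chi,\dots,a_d^\chi)$ verifies in coordinates.
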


\begin{proof}

Suppose that $\phi_p$ is not surjective. Then there is a dimension $1$ subspace $L = \Span_{\F_p}(v) \leq \F_p^d$ whose associated point $[\overline{v}]\in\mathbb{P}^1(\F_p^d)$ is not in the set $\operatorname{im}(\phi_p)=\{[\overline{\chi_1}],\ldots [\overline{\chi_n}]\}$.
Up to a change of coordinates, we can assume that $v = (1, 0, \dots, 0)$ and consider the covering map $f: T_{\tilde{\Lambda}}\to T_{\mathbb Z^d}$ where the inclusion $\tilde{\Lambda}\hookrightarrow \mathbb Z^d$ is represented by the following diagonal matrix
\[
	M:=\left(\begin{matrix} 
		p & 0 & 0 & \cdots & 0 \\
		0 & 1 & 0 & \cdots & 0 \\
		0 & 0 & 1 & \cdots & 0 \\
		\vdots & \vdots & \vdots & \ddots & \vdots \\
		0 & 0 & 0 & \cdots & 1
	\end{matrix}\right)
\]
Recall that as each defining character $\chi$ is primitive, we have $\gcd(a_1^\chi, \dots, a_d^\chi) = 1$. Let $a_1^\chi = p^k b$ with $p \nmid b$ (note that such $k$ is unique, and it may a priori be $0$). Then 
\begin{align*}
	\gcd(f(\chi)) &= \gcd(pa_1^\chi, a_2^\chi, \dots, a_d^\chi) \\
		&= \gcd(p^{k+1}b, a_2^\chi, \dots, a_d^\chi) \\
		&= \gcd(p^{k+1}, a_2^\chi, \dots, a_d^\chi) \cdot \gcd(b, a_2^\chi, \dots, a_d^\chi)
\end{align*}
The second factor is equal to $1$ because it divides $\gcd(a_1^\chi, \dots, a_d^\chi) = 1$ since $\chi$ is primitive.
The first factor must equal $1$ as well, since the fact that $[\overline{v}]$ is not among the $[\overline{\chi_i}]$
means that %
there is at least one $j\geq 2$ such that coordinate $a_j^\chi$  is nonzero modulo $p$. In other words, there is at least one $j\geq 2$ such that $p \nmid a_i^\chi$. 
Now, since $\gcd(f(\chi))=1$ for all defining characters $\chi$ of $\B$, the lifted arrangement $\B_c^U$ is primitive. Moreover, a glance at the determinant of $M$ shows that the covering $f$ has degree $p$. \medskip\\
Conversely, suppose that $\phi_p$ is surjective. By way of contradiction, suppose that $\B$ admits a primitive $p$-cover $f:T_{\tilde{\Lambda}}\to T_{\mathbb Z^d}$ and let $M$ represent the inclusion $\tilde{\Lambda}\to \mathbb Z^d$. Then $\det(M)$ must be a power of $p$. If $\overline{M}$ is the componentwise reduction modulo $p$ of $M$, then $$\det(\overline{M}) \equiv \det(M) \equiv 0 \mod p,$$    
hence $\overline{M}$ is singular and we can consider an element $v\in \ker (\overline{M})$.
Surjectivity of $\phi_p$ implies that there is a defining character $\chi$ such that $v = \lambda \ol\chi$ for some $\lambda \in \F_p^\times$. Then
\begin{equation}\label{modp}
	\ol M \lambda \ol\chi \equiv \ol M v \equiv 0 \mod p.
\end{equation}
Consider $\hat\chi = M\chi$, which is a defining character of the lift $\B^{T_{\tilde{\Lambda}}}_c$ (see \Cref{rem:defining.character.B^U_c} and \Cref{def:lifted.arrangement}). Now \Cref{modp} implies $\ol M\ol\chi \equiv 0 \mod p$, and thus $p$ divides every coefficient of $\hat\chi$. But this means that $\hat\chi$ is not primitive in $\mathbb Z^d$ and thus $\B^{T_{\tilde{\Lambda}}}_c$ is not primitive, contradicting the fact that $f$ is a primitive cover.
\end{proof}

\begin{remark}\label{remCp} When $\cap_i H_i =\{1\}$, then by work of Pagaria \cite{Pa19} the defining characters can be recovered from the poset $\lay(\B)$ up to a unimodular transformation of $\Lambda$. Thus  within this class the set of $p$'s for which there are primitive $p$-covers is combinatorial. In general, for a fixed poset $\lay(\B)$ the family of all sets of defining characters of toric arrangements $\B'$ such that $\lay(\B')$ is isomorphic to $\lay(\B)$ can be effectively computed via work of Pagaria and Paolini \cite{PP21} (in particular, using the software \texttt{arithmat} \cite{PP19}). We leave it as an open question to determine whether the set of primes for which there is a primitive $p$-cover is constant across all arrangements $\B'$ with a given fixed poset of layers.
\end{remark}

\begin{corollary}\label{cor:existence.primitive.p.covers}
	Let $\B$ be a primitive toric arrangement. %
    Then $\B$ admits a primitive $p$-cover for all but finitely many primes $p$. 
\end{corollary}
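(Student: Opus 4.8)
The plan is to deduce this directly from the characterization in \Cref{prop:existence.primitive.p.cover} by a counting argument. After fixing a $\Z$-basis we may assume $T = T_{\Z^d}$ with $d = \rk(\Lambda)$ and write the defining characters as $X = \{\chi_1, \dots, \chi_n\} \subset \Z^d$. By \Cref{prop:existence.primitive.p.cover}, the arrangement $\B$ admits a primitive $p$-cover (indeed one of degree exactly $p$) precisely when the map $\phi_p : X \to \mathbb P^1(\F_p^d)$ fails to be surjective. Hence it suffices to show that $\phi_p$ is non-surjective for all but finitely many primes $p$.

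First I would estimate the two cardinalities involved. The source $X$ is a fixed finite set with $|X| \leq n$, so $|\operatorname{im}(\phi_p)| \leq n$ for \emph{every} prime $p$. On the other hand, the target $\mathbb P^1(\F_p^d)$ is the set of lines through the origin in $\F_p^d$, which has exactly $\frac{p^d - 1}{p - 1} = 1 + p + \cdots + p^{d-1}$ elements.

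The decisive step is then an immediate comparison. Assuming $d \geq 2$ we have $|\mathbb P^1(\F_p^d)| \geq p + 1$, so as soon as $p \geq n$ we obtain $|\mathbb P^1(\F_p^d)| \geq p+1 > n \geq |\operatorname{im}(\phi_p)|$. For such $p$ the map $\phi_p$ cannot be surjective, and \Cref{prop:existence.primitive.p.cover} produces a primitive $p$-cover of $\B$. Since only finitely many primes satisfy $p < n$, the exceptional set is finite and the claim follows.

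I do not anticipate a genuine obstacle here: the heart of the matter is simply the growth $|\mathbb P^1(\F_p^d)| \to \infty$ as $p \to \infty$, set against the $p$-independent bound $|\operatorname{im}(\phi_p)| \leq n$. The only point demanding care is the degenerate rank-one situation $d = 1$, where $\mathbb P^1(\F_p^1)$ is a single point and $\phi_p$ is always surjective; there no primitive $p$-cover exists for any $p$, so the statement should be read for arrangements of rank at least two (as is the case for all arrangements of interest in the sequel).
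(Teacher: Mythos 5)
Your proof is correct and takes essentially the same route as the paper's: both compare the $p$-independent bound $|\operatorname{im}(\phi_p)| \leq n$ against $|\mathbb P^1(\F_p^d)| = 1 + p + \cdots + p^{d-1}$, which grows without bound in $p$ (for fixed $d$), so $\phi_p$ fails to be surjective for all large $p$ and \Cref{prop:existence.primitive.p.cover} yields the cover. Your closing remark about the degenerate case $d = 1$ (where $\mathbb P^1(\F_p^1)$ is a single point, $\phi_p$ is always surjective, and no primitive $p$-cover exists for any $p$) is a genuine precision that the paper's proof silently glosses over --- its claim that $n(p,d)$ is increasing in $p$ fails for $d=1$ --- so flagging that the statement should be read for rank at least two is a worthwhile addition.
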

\begin{proof}	
	Let $d$ denote the dimension of the ambient torus and let $p$ be a prime number. The finite vector space $\F_p^d$ admits exactly
	\[
		n(p, d) := \frac{p^d - 1}{p - 1} = 1 + p + p^2 + \cdots + p^{d-1}
	\]
	distinct lines, hence $n(p,d)$ is the number of elements of $\mathbb P^1(\F_p^d)$. 	\Cref{prop:existence.primitive.p.cover} asserts that $\B$ admits a primitive $p$-cover if there exists at least one element $L \in \mathbb P^1( \F_p^d)$ distinct from each of the lines generated by the defining characters modulo $p$. If the number of defining characters, i.e., the cardinality $|\B|$ is smaller than $n(p, d)$, then there will be such $L$. The claim follows because for fixed $d$ the quantity $n(p,d)$ is an increasing function of $p$.
\end{proof}

\begin{remark} \Cref{prop:existence.primitive.p.cover} can be used in order to show that for every fixed prime number $p$ there exists a primitive toric arrangement which does not admit any primitive $p$-cover. Such instances are constructed in the next example. 
\end{remark}

\begin{example}

    Consider the essential toric arrangement $\B_3^e$ from \Cref{ex2}. This is an essential primitive arrangement in $(\C^\times)^2$ %
    such that $|\B_3^e| = 3$. For every odd prime $p$ we have

    \begin{align*}
        n(p, 2) = \frac{p^2-1}{p-1} \geq \frac{3^2-1}{3-1} = 4 > |\B_3^e| = 3
    \end{align*}
    So, for any such prime $p$ the map $\phi_p$ cannot be surjective and we can find a primitive $p$-cover of $\B_3^e$. 
    On the other hand,   primitive $2$-covers cannot exist since $n(2, 2) = 3$.
\end{example}

\section{Group actions on cohomology rings}\label{sec:action}

\subsection{Setup}

In this section we consider a toric arrangement $\B=\{H_1,\ldots,H_n\}$ in a torus $U=T_\Lambda$ with defining characters $\chi_1,\ldots,\chi_n$. 
If a finite subgroup $\G < U$ acts on $\B$ by left multiplication (and hence without fixed points), then $\G$ acts on the ring $H^\bul (M(\B),\mathbb Z)$. We aim at describing this action in terms of the set of generators of $H^\bul (M(\B),\mathbb Z)$ described in \cite{CDDMP20}, namely the cohomology classes 
\[
	e_{W, A; B} := [\ol\eta_{W, A; B}]
\]
where the $\ol\eta_{W,A;B}$ are logarithmic differential forms described in \Cref{generatingforms} and indexed over triples $W,A,B$ where $W$ is a layer of $\B$ and $(A,B)$ is a \emph{$W$-adapted pair} in the following sense. Note that the class $e_{W,A;B}$  has degree $|A\sqcup B|$.

\begin{definition}[Independent sets and adapted pairs]\label{def:independent} A subset $I\subseteq \{1,\ldots, n\}$  will be called \emph{independent} if the defining characters indexed in $I$ are an independent subset of $\Lambda$, i.e., $\rk_\Lambda(\{\chi_i\}_{I})=\vert I \vert$.
Given a layer $W$ of a toric arrangement $\B$, a pair $(A,B)$ of subsets $A,B\subseteq [n]$ is called {\em $W$-adapted} if  $W$ is a connected component of $\cap_{i \in A} H_i$
and $B$ is such that $A\cap B=\emptyset$ and $A \sqcup B$ is independent subset.
\end{definition}

\begin{remark}\label{motivation-setup}
The notation and setup of this section is tailored to our specific goal, i.e., studying the situation developed in \Cref{sec:covers} where a given toric arrangement in a torus $T$ is lifted to a covering $U$ of $T$ (\Cref{def:lifted.arrangement}), giving rise to an arrangement in $U$  which, in this section, we call $\B$. In this case case the group of deck transformations of the covering $U\to T$, which we identify with a subgroup $\G$ of $U$, acts on the arrangement $\B$. If the covering is primitive, $\B$ has the same number of hypersurfaces as the arrangement in $T$, and $\G$ acts preserving the hyperplanes, i.e., $\sigma H = H$ for all $H\in \B$.
\end{remark}

The goal of this section is to describe the induced action of $\G$ on the cohomology algebra $H^\bul(M(\B), \Z)$ in terms of the presentation given in \cite{CDDMP20}.

\subsection{Logarithmic forms generating the cohomology} We start by recalling the construction of the differential forms $\ol\eta_{W, A; B}$ from \cite[Section 2.5]{CDDMP20}. For every $1 \leq i \leq n$ consider the logarithmic forms:
\begin{align}
	\psi_i &:= \frac{1}{2\pi\sqrt{-1}} \dlog(e^{\chi_i}) \\
	\omega_i &:= \frac{1}{2\pi\sqrt{-1}} \dlog(1 - e^{\chi_i})
\end{align}
where the $\psi_i$ are defined on the whole of the torus $U$, and the $\omega_i$ only on the arrangement's complement. 
Moreover, put
\begin{align}
\ol \omega_i := 2\omega_i - \psi_i.
\end{align}

\noindent
We will need some additional differential forms $\ol\omega_{W, A}$, whose definition depends on some auxiliary finite covers of $U$. Let a (finitely sheeted) covering map $f : \wt U \to U$ be given and recall the characters $\chi_i^{\wt U}=\frac{\hat{\chi_i}}{a_i}$ from \Cref{rem:defining.character.B^U}. Then, for all $1 \leq i \leq n$ and for all $q \in f^{-1}(H_i)$ we define the following forms on the complement of the lift of the arrangement $\B$ through $f$:
\begin{align}
	\psi_i^{\wt U} &:= \frac{1}{a_i} f^*(\psi_i) = \frac{1}{2\pi \sqrt{-1}}\dlog(e^{\chi_i^{\wt U}}). \\
	\omega_{i,q}^{\wt U} &:= \frac{1}{2\pi\sqrt{-1}} \dlog\left(1 - e^{\chi_i^{\wt U} - \chi_i^{\wt U}(q)}\right) \\
	\ol\omega_{i,q}^{\wt U} &:= 2\omega_{i,q}^{\wt U} - \psi_i^{\wt U}
\end{align}
For all ordered tuples $A=(i_1, \dots, i_k)$ of distinct indices $i_j\in [n]$,
we set
\begin{equation}\label{eq:indices}
\begin{array}{ll}
\psi_A := \psi_{i_1} \wedge \cdots \wedge \psi_{i_k},\quad &
\psi_A^{\wt U} := \psi_{i_1}^{\wt U} \wedge \cdots \wedge \psi_{i_k}^{\wt U};
\\[.7em]
	\omega_A := \omega_{i_1} \wedge \cdots \wedge \omega_{i_k},\quad&
    \omega_A^{\wt U} := \omega_{i_1}^{\wt U} \wedge \cdots \wedge \omega_{i_k}^{\wt U};
    \\[.7em]
	\ol\omega_A := \ol\omega_{i_1} \wedge \cdots \wedge \ol\omega_{i_k},\quad &
    \ol\omega_A^{\wt U} := \ol\omega_{i_1}^{\wt U} \wedge \cdots \wedge \ol\omega_{i_k}^{\wt U}.
\end{array}
\end{equation}

Moreover, recalling the notation in \Cref{def:lifted.arrangement}, given an ordered tuple $A$ of indices as above we set
\begin{equation}\label{not:HU}
H_A:=\bigcap_{i\in A} H_i,\quad\quad\quad
H_{A,q}^{\wt U} := \bigcap_{i\in A}H_{i,q}^{\wt U}\quad
\textrm{ for }q\in f^{-1}(H_i)
\end{equation}
\begin{definition}\label{def:uniqueomega}
Fix an ordered tuple of indices $A$, let $W$ be a connected component of $H_A$ and choose $p \in W$. Since the pullback map $f^*$ is injective, following \cite[Definition 4.4]{CDDMP20}  define the differential forms $\omega_{W, A}^f$ and $\ol\omega_{W, A}^f$ as the unique forms on $M(\B)$ such that:
\begin{align*}
	f^*(\omega_{W, A}^f) = \frac{1}{|H_{A, q_0}^{\wt U} \cap f^{-1}(p)|} \sum_{q \in f^{-1}(p)} \omega_{A, q}^{\wt U}
\end{align*}
and
\begin{align*}
	f^*(\ol\omega_{W, A}^f) = \frac{1}{|H_{A, q_0}^{\wt U} \cap f^{-1}(p)|} \sum_{q \in f^{-1}(p)} \ol\omega_{A, q}^{\wt U}
\end{align*}
\end{definition}

\begin{definition}[Separating cover, see {\cite[Definition 5.1]{CDDMP20}}]\label{def:separating.cover}
	Let $A$ be an ordered tuple of indices. We say that a covering map $f : \wt U \to U$ \emph{separates} $A$ if for every layer $W \in \pi_0(H_A)$ and all $i \in A$, there exists $q_i \in f^{-1}(H_i)$ such that
\[
		f(H_{A, q_i}^{\wt U}) = W.
\]
\end{definition}

\begin{remark}\label{rem:separating.covers}

In \cite[Proposition 5.3]{CDDMP20} it is proven that separating covers exist for all choices of $A$. Moreover, by \cite[Theorem 5.4]{CDDMP20} if $f$ and $g$ are both separating for $A$ then 
\begin{equation*}
	\omega_{W, A}^f = \omega_{W, A}^g \quad \textrm{ and } \quad
	\ol\omega_{W, A}^f = \ol\omega_{W, A}^g. 
\end{equation*}

\noindent
We will thus suppress the superscript and write simply
$\omega_{W, A}$, resp.~$\ol\omega_{W, A}$.
\end{remark}

\begin{proposition}[{\cite[Theorem 6.14]{CDDMP20}}] \label{prop:generators}
Let $\B$ be an essential toric arrangement. The cohomology algebra $H^\bullet(M(\B), \Z)$ is generated by the cohomology classes of the differential forms
\begin{equation}\label{generatingforms}
    \ol\eta_{W, A; B} = (-1)^{\ell(A, B)} \ol\omega_{W, A} \wedge\psi_B
\end{equation}
for all 
$W \in \pi_0(H_A)$ and all $W$-adapted pairs $(A,B)$, with $\ell(A, B)$ equal to the sign of the permutation taking the set $A \sqcup B$ (with increasing order) to the sequence obtained by concatenating $A$ and $B$ (each ordered in increasing order).
\end{proposition}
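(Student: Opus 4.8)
Since the statement is quoted as \cite[Theorem 6.14]{CDDMP20}, a complete argument is out of scope here; I sketch the strategy I would follow to reprove it. The starting observation is that $M(\B)$ is a smooth affine variety (the complement of finitely many hypersurfaces in $T_\Lambda\cong(\C^\times)^d$), so by Grothendieck's algebraic de Rham theorem its complex cohomology is computed by global algebraic forms, and the logarithmic forms $\psi_i,\omega_i,\ol\omega_i$ introduced above are closed and algebraic. The plan is to prove, by a double induction on the number $n$ of hypersurfaces and on the rank $d$, that the classes $e_{W,A;B}$ generate $H^\bul(M(\B),\Z)$. The base case $n=0$ is $M(\B)=T_\Lambda$, whose cohomology is the exterior algebra on the $\psi_i$, matching the classes $e_{T_\Lambda,\emptyset;B}=\psi_B$.

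For the inductive step I would set $\B'=\{H_1,\dots,H_{n-1}\}$ and use that $M(\B)=M(\B')\setminus\bigl(H_n\cap M(\B')\bigr)$, where $H_n$ is a union of translated codimension-one subtori on which $\B'$ induces a toric arrangement of strictly smaller rank. This open inclusion produces a Gysin (residue) long exact sequence
\[
\cdots\to H^{k}(M(\B'))\to H^{k}(M(\B))\xrightarrow{\operatorname{Res}} H^{k-1}\bigl(H_n\cap M(\B')\bigr)\to H^{k+1}(M(\B'))\to\cdots
\]
By the inductive hypothesis both $H^\bul(M(\B'))$ and $H^\bul(H_n\cap M(\B'))$ are generated by forms of the prescribed type. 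Classes pulled back from $M(\B')$ retain the prescribed form, so the remaining task is to lift the generators of the restriction along the residue map.

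The lift is governed by the residue computation $\operatorname{Res}_{H_n}(\omega_n\wedge\gamma)=\gamma|_{H_n}$ (while $\operatorname{Res}_{H_n}\psi_i=0$ since $\psi_i$ is holomorphic on all of $T_\Lambda$): a generator $\gamma$ of the restriction is hit by $\omega_n\wedge\tilde\gamma$ for any extension $\tilde\gamma$, and after rewriting in terms of $\ol\omega_n=2\omega_n-\psi_n$ and passing to the normalized forms $\ol\omega_{W,A}$ of \Cref{def:uniqueomega} this exhibits the new class as a product of an $\ol\omega$-factor whose index set now contains $n$ and a $\psi_B$-factor, i.e.\ again as some $e_{W,A;B}$. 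Here the averaging over a separating cover (\Cref{def:separating.cover}) is essential rather than cosmetic: when $H_A$ is disconnected the naive residue form is not single-valued on $M(\B)$, and the averaged forms $\ol\omega_{W,A}$ are exactly the globally defined representatives indexed by the individual layers $W\in\pi_0(H_A)$.

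I expect the main obstacle to be twofold. First, one must know that the residue sequence splits — equivalently, that the Leray spectral sequence for $M(\B)\hookrightarrow T_\Lambda$ degenerates — so that the restriction's generators genuinely lift rather than supporting a nonzero differential; this is a weight/formality argument, and it is here that the combinatorics of $\lay(\B)$ enters, controlling the $E_2$-page through the local cohomology along each layer. Second, the statement is integral: after establishing generation over $\Q$ one must verify that the $e_{W,A;B}$ already generate over $\Z$. This is the most delicate point, and is precisely the reason for working with the explicit integral form model of \cite{CDDMP20} (and for the normalization $\ol\omega_i=2\omega_i-\psi_i$, which makes the relevant periods integral) rather than with de Rham cohomology alone, since $H^\bul(M(\B),\Z)$ may carry torsion invisible to a rational argument.
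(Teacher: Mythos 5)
The paper does not prove this proposition at all: it is imported verbatim as \cite[Theorem 6.14]{CDDMP20}, so the benchmark is the proof given there, not anything in this manuscript. Measured against that, your sketch takes a genuinely different route. The proof in \cite{CDDMP20} does not run a deletion--restriction induction; it filters $H^\bul(M(\B),\Z)$ by the Leray spectral sequence of the open inclusion $M(\B)\hookrightarrow T_\Lambda$, proves degeneration integrally, identifies the associated graded with $\bigoplus_{W}H^\bul(W)\otimes H^{\operatorname{codim}(W)}(M(\B[W]))$, and checks that the classes $e_{W,A;B}$ hit generators of each graded piece (local Orlik--Solomon generators tensored with the cohomology of the layer). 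This is precisely the filtration the present paper recycles in the proof of \Cref{prop:action.on.cohomology}. Your two ``obstacles'' are well chosen --- degeneration and integrality are indeed the crux --- but on integrality you are too pessimistic: degeneration over $\Z$ together with torsion-freeness of the $E_2$-terms shows that $H^\bul(M(\B),\Z)$ is in fact torsion-free, a fact this paper itself relies on in \Cref{prop:cohom.comparison}.

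The genuine gap in your scheme is that the Gysin induction is circular as stated. Exactness only tells you that $H^k(M(\B))$ is generated by the pullback from $M(\B')$ together with lifts of generators of the \emph{image} of $\operatorname{Res}$; to ``lift the generators of the restriction'' as you propose, you need $\operatorname{Res}$ to be surjective at every deletion step, i.e.\ that the long exact sequence splits into short exact sequences. In the hyperplane case this is Brieskorn's lemma, proved by an independent local argument; in the toric case the only known route to this splitting is essentially the degeneration statement of \cite{CDDMP20} that you defer to as ``the main obstacle,'' so your induction presupposes the theorem's hardest ingredient rather than supplying it. Two smaller unaddressed points: $H_n$ may be disconnected (its defining character need not be primitive), so the Gysin sequence must be taken over $\pi_0(H_n)$ with the induced arrangements on the separate components tracked individually; and the rewriting of $\omega_n\wedge\tilde\gamma$ into the normalized classes $\ol\omega_{W,A}$ with $n\in A$ is not automatic --- the compatibility of the separating-cover averaging of \Cref{def:uniqueomega} with residues is exactly what needs proof there, and your sketch asserts it rather than argues it.
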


\subsection{Action on differential forms}
Let $\G$ be a finite subgroup of $U$ acting on an essential %
toric arrangement $\B$ by (left) multiplication.
In order to describe the action of $\G$ on the differential forms of \Cref{generatingforms} we will consider, for every finite cover of topological groups $f : \wt U \to U$ that is separating (in the sense of \Cref{def:separating.cover}), the subgroup $\wt \G$ of $\wt U$ obtained by lifting $\G$ (i.e., $\wt \G = f^{-1}(\G)$). 
Notice that if $\sigma \in \G$, $\tilde y \in \wt U$ and $\tilde \sigma \in f^{-1}(\sigma)$, we will have
\begin{equation}\label{eq:cover.homomorphism}
	f(\tilde \sigma \tilde y) = \sigma f(\tilde y).
\end{equation}
As $\G$ acts without fixed points, so does $\wt \G$.

\begin{lemma}\label{lem:psi.action}
	For $1 \leq i \leq n$ and finite cover $f : \wt U \to U$, the following holds:
	\begin{enumerate}
		\item $\psi_i$ is invariant under the action of $\G$;
		\item $\psi_i^{\wt U}$ is invariant under the action of $\wt \G$.
	\end{enumerate}
\end{lemma}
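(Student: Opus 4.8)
The plan is to exploit two elementary facts: that each $\ch{\chi_i}$ is a group homomorphism $U \to \C^\times$, and that $\dlog$ sends products to sums and annihilates nonzero constants. Writing $L_\sigma : U \to U$ for left multiplication by an element $\sigma \in \G$, multiplicativity of the character gives
\[
\ch{\chi_i}\circ L_\sigma = \ch{\chi_i}(\sigma)\cdot \ch{\chi_i},
\]
so $\ch{\chi_i}\circ L_\sigma$ is the constant $\ch{\chi_i}(\sigma)\in\C^\times$ times $\ch{\chi_i}$. Since $\dlog(c\,g)=\dlog(g)$ for any constant $c\in\C^\times$, pulling back the defining expression of $\psi_i$ yields $L_\sigma^*\psi_i=\psi_i$, which is exactly claim (i). (Conceptually, $\psi_i$ is a left-invariant holomorphic $1$-form on the abelian Lie group $U$, hence invariant under every translation, in particular under the translations coming from $\G$.) First I would record this computation, being careful only to check the constant-multiple identity above, which is the one substantive point.

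For (ii) I would deduce the statement from (i) using the compatibility of $f$ with translations. Since $\wt\G=f^{-1}(\G)$ and $f$ is a homomorphism of topological groups, every $\tilde\sigma\in\wt\G$ satisfies $f\circ L_{\tilde\sigma}=L_{\sigma}\circ f$ with $\sigma:=f(\tilde\sigma)\in\G$; this is precisely \eqref{eq:cover.homomorphism}. Taking pullbacks (which reverses the order) gives $L_{\tilde\sigma}^*\circ f^* = f^*\circ L_\sigma^*$. Combining this with $\psi_i^{\wt U}=\tfrac{1}{a_i}f^*\psi_i$ and part (i),
\[
L_{\tilde\sigma}^*\psi_i^{\wt U}
=\tfrac{1}{a_i}\,L_{\tilde\sigma}^* f^*\psi_i
=\tfrac{1}{a_i}\,f^* L_{\sigma}^*\psi_i
=\tfrac{1}{a_i}\,f^*\psi_i
=\psi_i^{\wt U}.
\]
Alternatively, since $\chi_i^{\wt U}\in\wt\Lambda$ is itself a character of $\wt U$ and $\wt\G$ is a finite subgroup of $\wt U$ acting by left multiplication, (ii) is just (i) applied verbatim to $\psi_i^{\wt U}$ on $\wt U$; for this one only needs that $\wt\G$ is finite, which holds because $\ker f$ is finite and $\G$ is finite.

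Since both parts reduce to these formal identities, I do not expect any genuine obstacle. The only places deserving a moment's care are the verification of the translation identity $\ch{\chi_i}\circ L_\sigma = \ch{\chi_i}(\sigma)\,\ch{\chi_i}$ and the confirmation that $\wt\G=f^{-1}(\G)$ is a finite subgroup of $\wt U$, so that the hypotheses of part (i) transfer cleanly to the cover.
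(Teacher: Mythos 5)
Your proof is correct. Part (ii), in your main formulation, is exactly the paper's argument: from $f\circ L_{\tilde\sigma}=L_{\sigma}\circ f$ with $\sigma=f(\tilde\sigma)$ one gets $L_{\tilde\sigma}^*\,f^*=f^*\,L_{\sigma}^*$, and then $L_{\tilde\sigma}^*\psi_i^{\wt U}=\tfrac{1}{a_i}f^*L_\sigma^*\psi_i=\psi_i^{\wt U}$ by part (i). Part (i), however, is argued by a genuinely different mechanism. The paper uses the standing hypotheses — $\B$ central, so $1\in H_i=\ker(\ch{\chi_i})$, and $\sigma H_i=H_i$ — to conclude $\ch{\chi_i}(\sigma)=\ch{\chi_i}(\sigma\cdot 1)=1$, hence $\ch{\chi_i}\circ L_\sigma=\ch{\chi_i}$ exactly. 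You instead note that $\ch{\chi_i}\circ L_\sigma=\ch{\chi_i}(\sigma)\cdot\ch{\chi_i}$ is a constant multiple and that $\dlog$ annihilates nonzero multiplicative constants, so $L_\sigma^*\psi_i=\psi_i$ for \emph{every} translation $\sigma\in U$, with no centrality or arrangement-preservation hypothesis. This extra generality buys you something concrete: it legitimizes your alternative proof of (ii), applying (i) verbatim to the character $\chi_i^{\wt U}$ of $\wt U$. That shortcut would \emph{not} follow from the paper's version of (i), since the lifted arrangement need not be central (the components $H^{\wt U}_{i,q}$ are level sets $H_{(\chi_i^{\wt U},\,b_{i,q})}$ with $b_{i,q}$ possibly different from $1$), so one cannot argue $\ch{(\chi_i^{\wt U})}(\tilde\sigma)=1$ there; your constant-killing argument sidesteps this entirely. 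One small remark: finiteness of $\wt\G=f^{-1}(\G)$, which you flag as a point to verify, is never actually needed in your argument — left-invariance of $\psi_i$ holds for arbitrary translations. In short, the paper's proof makes visible where the hypotheses on $\G$ and $\B$ enter, while yours isolates the cleaner underlying fact that $\psi_i$ is a translation-invariant form on the abelian group $U$; both are sound.
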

\begin{proof}
Let $\sigma \in \G$ and $1 \leq i \leq n$. Then, since $\B$ is central we have $H_i=\ker(\chi_i)$, hence $1\in  H_i$. Since $\G$ preserves $\B$ we must have $\chi_i(\sigma)=\chi_i(\sigma\cdot 1) = \chi_i(1)=1$. Then, for all $y \in U$ we have $\chi_i(\sigma y)=\chi_i(\sigma)\chi_i(y)=\chi_i(y)$ and so 
\begin{align*}
	(\sigma^* \psi_i)_y &= \frac{1}{2\pi\sqrt{-1}} \dlog(e^{\chi_i(\sigma  y)})= \frac{1}{2\pi\sqrt{-1}} \dlog(e^{\chi_i(y)})
\end{align*}
in other words $\sigma^* \psi_i = \psi_i$. This proves directly claim (i), which implies claim (ii) by equivariance of $f$: indeed, for any $\tilde\sigma \in f^{-1}(\sigma)$ we have
\begin{align*}
	\tilde \sigma^* \psi_i^{\wt U} &= \frac{1}{a_i} \tilde \sigma^* f^*(\psi_i) = \frac{1}{a_i} (f \tilde \sigma)^*(\psi_i) \\
		&= \frac{1}{a_i} (\sigma f)^*(\psi_i) = \frac{1}{a_i} f^*(\sigma^* \psi_i) = \frac{1}{a_i}f^*(\psi_i) = \psi_i^{\wt U}.
\end{align*}
\end{proof}

\begin{lemma}\label{lem:omega.action}
	For every $1 \leq i \leq n$, finite cover $f : \wt U \to U$ and $q \in f^{-1}(H_i)$ we have
	\begin{enumerate}
		\item $\tau^* \omega_{i, q}^{\wt U} = \omega_{i, \tau^{-1}  q}$; and
		\item $\tau^* \ol\omega_{i, q}^{\wt U} = \ol\omega_{i, \tau^{-1}  q}$;
	\end{enumerate}
	for every $\tau \in \wt \G$.
\end{lemma}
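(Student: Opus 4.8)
The plan is to reduce both identities to the homomorphism property of the character $g := e^{\chi_i^{\wt U}} : \wt U \to \C^\times$. First I would rewrite the argument of the logarithm defining $\omega_{i,q}^{\wt U}$ purely in terms of $g$: since $g$ is multiplicative and $e^{\chi_i^{\wt U}(q)} = g(q)$, the function $1 - e^{\chi_i^{\wt U} - \chi_i^{\wt U}(q)}$ is precisely $y \mapsto 1 - g(y)/g(q)$, so that $\omega_{i,q}^{\wt U} = \frac{1}{2\pi\sqrt{-1}}\dlog(1 - g/g(q))$. Throughout, $\tau \in \wt\G$ acts as left multiplication $L_\tau : y \mapsto \tau y$, and since $\wt\G$ preserves the lifted arrangement this map restricts to a self-homeomorphism of its complement, so that $\tau^* = L_\tau^*$ is defined on the forms in question.

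The key computation tracks the effect of $L_\tau^*$ on the argument of the logarithm, since $\dlog$ commutes with pullback. Using $g(\tau y) = g(\tau) g(y)$, I obtain
\[
    L_\tau^*\Bigl(1 - \frac{g}{g(q)}\Bigr)(y) = 1 - \frac{g(\tau y)}{g(q)} = 1 - \frac{g(\tau) g(y)}{g(q)}.
\]
On the other hand $g(\tau^{-1} q) = g(\tau)^{-1} g(q)$, so the argument defining $\omega_{i, \tau^{-1} q}^{\wt U}$ is $y \mapsto 1 - g(y)/g(\tau^{-1} q) = 1 - g(\tau) g(y)/g(q)$, the very same function. Applying $\frac{1}{2\pi\sqrt{-1}}\dlog$ yields claim (i). It is worth noting that this argument does \emph{not} require $g(\tau)=1$; indeed $g(\tau)$ is only an $a_i$-th root of unity (as $g^{a_i} = e^{\chi_i}\circ f$ and $\chi_i(f(\tau)) = 1$ by the computation in \Cref{lem:psi.action}), and this value is absorbed exactly by the index shift $q \mapsto \tau^{-1} q$.

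Claim (ii) follows at once from (i) together with \Cref{lem:psi.action}: since $\psi_i^{\wt U}$ is $\wt\G$-invariant and independent of $q$, I would simply write $\tau^* \ol\omega_{i,q}^{\wt U} = 2\,\tau^*\omega_{i,q}^{\wt U} - \tau^*\psi_i^{\wt U} = 2\,\omega_{i,\tau^{-1}q}^{\wt U} - \psi_i^{\wt U} = \ol\omega_{i,\tau^{-1}q}^{\wt U}$. The only point requiring care---rather than a genuine obstacle---is that the right-hand sides be well defined, i.e.\ that $\tau^{-1}q \in f^{-1}(H_i)$ whenever $q \in f^{-1}(H_i)$. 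This holds because $\sigma := f(\tau) \in \G$ satisfies $\chi_i(\sigma)=1$, hence $\sigma \in H_i$ and $\sigma^{-1}H_i = H_i$; then $f(\tau^{-1}q) = \sigma^{-1} f(q) \in H_i$ since $f$ is a homomorphism of topological groups. As every remaining step is a routine manipulation of characters, I expect no serious difficulty beyond this bookkeeping.
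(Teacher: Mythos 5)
Your proof is correct and takes essentially the same route as the paper's: the paper establishes (i) by the identical pullback computation, writing $\chi_i^{\wt U}(\tau y) - \chi_i^{\wt U}(q) = \chi_i^{\wt U}(\tau y q^{-1}) = \chi_i^{\wt U}(y) - \chi_i^{\wt U}(\tau^{-1} q)$ via multiplicativity of the character (exactly your absorption of $g(\tau)$ into the index shift $q \mapsto \tau^{-1}q$), and deduces (ii) from part (i) together with the $\wt\G$-invariance of $\psi_i^{\wt U}$ from \Cref{lem:psi.action}. Your two extra bookkeeping observations---that $g(\tau)$ is only an $a_i$-th root of unity rather than $1$, and that $\tau^{-1}q \in f^{-1}(H_i)$ so the right-hand sides are well defined---are correct and are left implicit in the paper.
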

\begin{proof}
	Let $y \in \wt U$, we have:
	\begin{align*}
		(\tau^* \omega_{i, q}^{\wt U})_y &= \frac{1}{2\pi\sqrt{-1}} \dlog\left(1 - e^{\chi_i^{\wt U}(\tau  y) - \chi_i^{\wt U}(q)}\right) = \frac{1}{2\pi\sqrt{-1}} \dlog\left(1 - e^{\chi_i^{\wt U}(\tau y q^{-1})}\right) \\
		&=  \frac{1}{2\pi\sqrt{-1}} \dlog\left(1 - e^{\chi_i^{\wt U}(y) - \chi_i^{\wt U}(\tau^{-1}  q)}\right) = (\omega_{i, \tau^{-1} \cdot q})_y
	\end{align*}
	The second equation is clear from the $\wt \G$ invariance of $\psi_i^{\wt U}$ (\Cref{lem:psi.action}.(ii)) and the first equation.
\end{proof}

\begin{corollary}\label{cor:psi.omega.A.action}
	For every ordered tuple $A$  of elements of $\{1, \dots, n\}$, finite cover $f : \wt U \to U$ and $q \in f^{-1}(H_A)$ the following hold:
	\begin{enumerate}
		\item $\sigma^* \psi_A = \psi_A$;
		\item $\tau^* \psi_A^{\wt U} = \psi_A^{\wt U}$;
		\item $\tau^* \omega_{W, A}^{\wt U} = \omega_{\tau^{-1} W, \ A}^{\wt U}$; and
		\item $\tau^* \ol\omega_{W, A}^{\wt U} = \ol\omega_{\tau^{-1} W, \ A}^{\wt U}$;		 
	\end{enumerate}
	for every $\sigma \in \G$ and $\tau \in \wt \G$.
\end{corollary}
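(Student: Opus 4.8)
The plan is to exploit that pullback of differential forms is an algebra homomorphism, and in particular commutes with the wedge product. Writing $A = (i_1, \dots, i_k)$, every object appearing in the statement is a wedge of the single-index forms treated in \Cref{lem:psi.action} and \Cref{lem:omega.action}, so each claim reduces to applying the corresponding single-index identity factor by factor.

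For claim (i) I would expand $\psi_A = \psi_{i_1} \wedge \cdots \wedge \psi_{i_k}$ and push $\sigma^*$ through the wedge, so that $\sigma^* \psi_A = (\sigma^* \psi_{i_1}) \wedge \cdots \wedge (\sigma^* \psi_{i_k})$; by \Cref{lem:psi.action}.(i) each factor is fixed, hence so is the product. Claim (ii) is verbatim the same computation with the forms $\psi_{i_j}^{\wt U}$, with $\tau \in \wt\G$, and with \Cref{lem:psi.action}.(ii) in place of the base statement.

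For claims (iii) and (iv) I would first pin down the notation: for a connected component $W$ of $H_A^{\wt U}$ in the cover I set $\omega_{W,A}^{\wt U} := \omega_{A,q}^{\wt U} = \omega_{i_1,q}^{\wt U} \wedge \cdots \wedge \omega_{i_k,q}^{\wt U}$ for any $q \in W$, and likewise for $\ol\omega_{W,A}^{\wt U}$. This requires checking independence of the chosen $q \in W$: by \Cref{rem:defining.character.B^U} each $\chi_{i_j}^{\wt U}$ is constant (equal to $b_{i_j,q}$) on the component $H_{i_j,q}^{\wt U}$ containing $W$, so every factor $\omega_{i_j,q}^{\wt U}$ depends only on $W$. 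Granting this, I push $\tau^*$ through the wedge and apply \Cref{lem:omega.action}.(i) to each factor, obtaining $\tau^* \omega_{W,A}^{\wt U} = \omega_{i_1, \tau^{-1} q}^{\wt U} \wedge \cdots \wedge \omega_{i_k, \tau^{-1} q}^{\wt U} = \omega_{A,\tau^{-1}q}^{\wt U}$; claim (iv) is identical, using \Cref{lem:omega.action}.(ii) and the barred forms.

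The one genuine point, and the step I expect to be the main obstacle, is the relabeling $\omega_{A,\tau^{-1}q}^{\wt U} = \omega_{\tau^{-1}W, A}^{\wt U}$, i.e., passing from the point index $\tau^{-1}q$ back to a layer index. For this I would verify that $\wt\G$ preserves the lifted arrangement: since $\G$ fixes each $H_i$ and $\wt\G = f^{-1}(\G)$, the equivariance relation \eqref{eq:cover.homomorphism} gives $\tau H_i^{\wt U} = H_i^{\wt U}$ for all $\tau \in \wt\G$, so $\tau$ permutes the connected components of $H_A^{\wt U}$. Hence $\tau^{-1}W$ is again such a component and $\tau^{-1}q \in \tau^{-1}W$, so the well-definedness established above identifies $\omega_{A,\tau^{-1}q}^{\wt U}$ with $\omega_{\tau^{-1}W,A}^{\wt U}$, completing (iii) and (iv).
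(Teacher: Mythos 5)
Your proposal is correct and takes essentially the same route as the paper, whose proof is a one-line reduction, via the definitions in \Cref{eq:indices}, to factor-wise application of \Cref{lem:psi.action} and \Cref{lem:omega.action} through the wedge product, exactly as you do. The two details you make explicit --- that $\omega_{A,q}^{\wt U}$ depends on $q$ only through the connected component $W$ of the lifted intersection containing it (so the layer-indexed notation $\omega_{W,A}^{\wt U}$ is well defined), and that $\wt\G$ preserves each $H_i^{\wt U}$ and hence permutes these components, legitimizing the relabeling $\omega_{A,\tau^{-1}q}^{\wt U}=\omega_{\tau^{-1}W,\,A}^{\wt U}$ --- are precisely what the paper leaves implicit in ``follows immediately.''
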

\begin{proof}
	Follows immediately from  \Cref{lem:psi.action,lem:omega.action} and \Cref{eq:indices}.
\end{proof}

\begin{lemma}\label{lem:omega.WA.action}
	Let $A$ be an ordered tuple of elements of $\{1, \dots, n\}$ and let $W \in \pi_0(H_A)$. Then for  all $\sigma \in \G$ we have
	\begin{enumerate}
		\item $\sigma^* \omega_{W, A} = \omega_{\sigma^{-1} W,\, A}$; and
		\item $\sigma^* \ol\omega_{W, A} = \ol\omega_{\sigma^{-1} W,\, A}$.
	\end{enumerate}
\end{lemma}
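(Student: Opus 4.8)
The plan is to verify the two identities after pulling back along a separating cover and then invoke the defining property of $\omega_{W,A}$. Concretely, fix a separating cover $f:\wt U\to U$ (one exists by \Cref{rem:separating.covers}) together with a lift $\tilde\sigma\in f^{-1}(\sigma)\subseteq\wt\G$. Since the pullback $f^*$ is injective (this injectivity is exactly what makes \Cref{def:uniqueomega} well posed), it suffices to prove $f^*(\sigma^*\omega_{W,A})=f^*(\omega_{\sigma^{-1}W,A})$, and similarly for $\ol\omega$. Before starting I would record two structural facts. First, because $\B$ is central every $\sigma\in\G$ fixes each $H_i$ setwise, hence fixes $H_A$ and permutes its connected components, so that $\sigma^{-1}W$ is again a layer in $\pi_0(H_A)$ and the right-hand side makes sense. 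Second, the equivariance relation \eqref{eq:cover.homomorphism} extends to inverses, giving $f\circ\tilde\sigma^{-1}=\sigma^{-1}\circ f$, since $f$ is a homomorphism of topological groups.

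The core computation would then proceed as follows. Using $\sigma\circ f=f\circ\tilde\sigma$ together with the expansion of $f^*(\omega_{W,A})$ from \Cref{def:uniqueomega} with base point $p\in W$, I would write
\[
f^*(\sigma^*\omega_{W,A})=\tilde\sigma^*\,f^*(\omega_{W,A})=\frac{1}{|H_{A,q_0}^{\wt U}\cap f^{-1}(p)|}\sum_{q\in f^{-1}(p)}\tilde\sigma^*\omega_{A,q}^{\wt U}.
\]
By \Cref{lem:omega.action} applied factor by factor through \eqref{eq:indices}, i.e.\ via \Cref{cor:psi.omega.A.action}, each summand becomes $\omega_{A,\tilde\sigma^{-1}q}^{\wt U}$. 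Reindexing by $q'=\tilde\sigma^{-1}q$ and using $\tilde\sigma^{-1}f^{-1}(p)=f^{-1}(\sigma^{-1}p)$ turns the sum into $\sum_{q'\in f^{-1}(\sigma^{-1}p)}\omega_{A,q'}^{\wt U}$. This is exactly the numerator appearing in \Cref{def:uniqueomega} for $\omega_{\sigma^{-1}W,A}$ with the admissible base point $\sigma^{-1}p\in\sigma^{-1}W$, so once the two normalizing constants are shown to agree we conclude $f^*(\sigma^*\omega_{W,A})=f^*(\omega_{\sigma^{-1}W,A})$ and hence (i). The argument for $\ol\omega_{W,A}$ is word for word the same, replacing \Cref{lem:omega.action}.(i) by \Cref{lem:omega.action}.(ii), so (ii) follows simultaneously.

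The main obstacle I anticipate is the bookkeeping of the normalizing constant rather than any conceptual difficulty: one must check that $|H_{A,q_0}^{\wt U}\cap f^{-1}(p)|$ equals the constant $|H_{A,q_0'}^{\wt U}\cap f^{-1}(\sigma^{-1}p)|$ entering the definition of $\omega_{\sigma^{-1}W,A}$. I would handle this by choosing the component $q_0':=\tilde\sigma^{-1}q_0$: the homeomorphism $\tilde\sigma^{-1}$ preserves the lifted arrangement (because $\tilde\sigma^{-1}f^{-1}(H_i)=f^{-1}(\sigma^{-1}H_i)=f^{-1}(H_i)$), so it carries $H_{A,q_0}^{\wt U}$ onto $H_{A,q_0'}^{\wt U}$ and carries $f^{-1}(p)$ bijectively onto $f^{-1}(\sigma^{-1}p)$; restricting it yields a bijection between $H_{A,q_0}^{\wt U}\cap f^{-1}(p)$ and $H_{A,q_0'}^{\wt U}\cap f^{-1}(\sigma^{-1}p)$, whence the two cardinalities coincide. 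A secondary point worth stating explicitly is that although the recipe in \Cref{def:uniqueomega} depends a priori on the choices of base point and of separating cover, \Cref{rem:separating.covers} guarantees that the resulting forms are independent of these choices, which legitimizes comparing the two presentations computed with base points $p$ and $\sigma^{-1}p$.
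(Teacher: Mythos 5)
Your proposal is correct and follows essentially the same route as the paper's own proof: pull back along a separating cover, apply \Cref{lem:omega.action} termwise via \Cref{cor:psi.omega.A.action}, re-index the sum using $\tilde\sigma^{-1}f^{-1}(p)=f^{-1}(\sigma^{-1}p)$, match the normalizing constants through the homeomorphism $\tilde\sigma^{-1}$, and conclude by injectivity of $f^*$. The only cosmetic difference is that the paper isolates as a separate claim the independence of $\delta_{A,p}=\vert H^{\wt U}_{A,q_0}\cap f^{-1}(p)\vert$ from the choice of $q_0$ (proved via deck transformations), a well-definedness point you instead delegate to \Cref{def:uniqueomega} and \Cref{rem:separating.covers} before making the specific choice $q_0'=\tilde\sigma^{-1}q_0$.
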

\begin{proof}
    Recall from \Cref{def:uniqueomega} that 
    $\omega_{W, A}$ is the unique  differential form on $M(\B)$ that
    satisfies
	\[
		f^*(\omega_{W, A}) = \frac{1}{\delta_{A, p}} \sum_{q \in f^{-1}(p)} \omega_{A, q}^{\wt U}	
	\]
	for any (hence all) $p \in W$, where $f$ is any separating cover $f : \wt U \to U$ (\Cref{rem:separating.covers}) and we put $$\delta_{A, p} := \left\vert H_{A, q_0}^{\wt U} \cap f^{-1}(p)\right\vert \textrm{ for some }q_0 \in f^{-1}(p)$$
    (recall \Cref{not:HU} for the definition of $H_{A, q_0}^{\wt U}$).

\paragraph{\em Claim 1. The number $\delta_{A,p}$ does not depend on the choice of $q_0$ in $f^{-1}(p)$}\strut\\
{\em Proof.} Any $q_0'\in f^{-1}(p)$ is the image of $q_0$ by some deck transformation, say $\gamma$, of the covering $f$. Now $\gamma f^{-1}(p) = f^{-1}(p)$ and $\gamma H_{i,q_0}^{\tilde{U}}=  H_{i,\gamma q_0}^{\tilde{U}}$ since $\gamma$ is an automorphism of the lift of $\B$ to $\wt U$. Thus $H_{A, \gamma q_0}^{\wt U} \cap f^{-1}(p)=\gamma (H_{A, q_0}^{\wt U} \cap f^{-1}(p))$, and bijectivity of $\gamma$ ensures that   $H_{A, q_0}^{\wt U} \cap f^{-1}(p)$ and $H_{A, \gamma q_0}^{\wt U} \cap f^{-1}(p)$ are equicardinal. \hfill$\triangle$

\medskip

\paragraph{\em Claim 2: $
 \delta_{A,\sigma p}= \delta_{A,p}$ for every $\sigma\in \G$}\strut\newline
 {\em Proof.} Fix $\tilde{\sigma}\in f^{-1}(\sigma)$. We compute 
$$
 \tilde{\sigma}\left(H_{A, q_0}^{\wt U} \cap f^{-1}(p)\right)=
 H_{A, \tilde{\sigma}q_0}^{\wt U} \cap \tilde{\sigma}f^{-1}(p)
 = 
 H_{A, \tilde{\sigma}q_0}^{\wt U} \cap f^{-1}(\sigma p),
 $$
 where the last equality holds because,  by \Cref{eq:cover.homomorphism},
$f^{-1}(\sigma p) = \wt\sigma f^{-1}(p)$.
 Now the claim follows by bijectivity of $\wt\sigma$ and the independence of the choice of the representative in $f^{-1}(p)$, resp $f^{-1}(\sigma p)$ in the definition of
 $
 \delta_{A,\sigma p}$ and $ \delta_{A,p}
 $ (see Claim 1). \hfill$\triangle$

 Now let us turn to the proof of (i). Fix $\sigma\in \G $ and pick any $\tilde \sigma \in f^{-1}(\sigma)$. Then 
	\begin{align*}
		f^*(\sigma^* \omega_{W, A}) = \tilde \sigma^* f^*(\omega_{W, A}) &= \frac{1}{\delta_{A, p}} \sum\nolimits_{q \in f^{-1}(p)} \tilde \sigma^* \omega_{A, q}^{\wt U} \\
			&\overset{(i)}{=} \frac{1}{\delta_{A, p}} \sum\nolimits_{q \in f^{-1}(p)} \omega_{A, \wt\sigma^{-1} q}^{\wt U} \\
			&\overset{(ii)}{=} \frac{1}{\delta_{A, \sigma^{-1}p}} \sum\nolimits_{q \in f^{-1}(\sigma^{-1}  p)} \omega_{A, q}^{\wt U} = f^*(\omega_{\sigma^{-1}  W, A})
	\end{align*}		
	where equality (i) is
\Cref{lem:omega.action} and (ii) holds by Claim 2 after re-indexing the sum (since $\wt\sigma^{-1}f^{-1}(p)=f^{-1}(\sigma^{-1} p)$).

Now as the differential forms $\sigma^* \omega_{W, A}$ and $\omega_{\sigma^{-1}  W, A}$ do not depend on the choice of point $p \in W$ and $\sigma^{-1} p \in \sigma^{-1}  W$, injectivity of $f^*$ implies
	\[
		\sigma^* \omega_{W, A} = \omega_{\sigma^{-1}  W, A}
	\]
	
	A proof of the claim for $\ol\omega_{W, A}$ can be obtained along the same lines. 
\end{proof}

\begin{proposition}\label{prop:eta.WAB.action}
	Let $\B = \{H_1, \dots, H_n\}$ be an essential toric arrangement in $U$, and let $\G \leq U$ be a finite subgroup such that $\sigma H_i = H_i$ for all $ \sigma \in \G$ and all $1 \leq i \leq n$. Then 
    for all layers $W \in \pi_0(H_A)$ and for all $W$-adapted pairs $(A,B)$ we have
	\[
		\sigma^* \ol\eta_{W, A; B} = \ol\eta_{\sigma^{-1} W, \ A; B}.
	\]
\end{proposition}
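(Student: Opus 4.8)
The plan is to exploit the fact that the pullback $\sigma^*$ along the self-homeomorphism of $M(\B)$ induced by an element $\sigma \in \G$ is a homomorphism of graded $\Z$-algebras, hence compatible with the wedge product. Recalling from \Cref{prop:generators} that
\[
\ol\eta_{W,A;B} = (-1)^{\ell(A,B)}\, \ol\omega_{W,A}\wedge \psi_B,
\]
I would first observe that the sign $(-1)^{\ell(A,B)}$ is a purely combinatorial quantity depending only on the ordered index tuples $A$ and $B$ — it is the sign of the shuffle permutation reordering $A\sqcup B$ — and in particular it is independent of the layer $W$. Therefore it factors out unchanged and the problem reduces to computing $\sigma^*(\ol\omega_{W,A}\wedge\psi_B) = \sigma^*(\ol\omega_{W,A})\wedge \sigma^*(\psi_B)$.

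The two factors are then handled directly by the results already established in this section. First, \Cref{lem:omega.WA.action}.(ii) gives exactly $\sigma^*\ol\omega_{W,A} = \ol\omega_{\sigma^{-1}W, A}$. Second, \Cref{cor:psi.omega.A.action}.(i), applied to the tuple $B$ in place of $A$, yields $\sigma^*\psi_B = \psi_B$. Substituting these two identities back, together with the layer-independence of the sign, gives
\[
\sigma^*\ol\eta_{W,A;B} = (-1)^{\ell(A,B)}\,\ol\omega_{\sigma^{-1}W, A}\wedge\psi_B = \ol\eta_{\sigma^{-1}W, A; B},
\]
which is the claim.

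The only point that deserves a word of care — and the closest thing here to an obstacle — is to confirm that the right-hand side is well-formed, i.e.\ that $(A,B)$ is genuinely a $\sigma^{-1}W$-adapted pair in the sense of \Cref{def:independent}. This is immediate: since $\sigma H_i = H_i$ for every $i$, the homeomorphism induced by $\sigma$ preserves $H_A = \bigcap_{i\in A}H_i$ and hence permutes its connected components, so $\sigma^{-1}W \in \pi_0(H_A)$ whenever $W \in \pi_0(H_A)$; the independence of $A\sqcup B$ is a condition on the defining characters alone and is untouched by $\sigma$. Thus the statement is essentially a formal consequence of \Cref{lem:omega.WA.action} and \Cref{cor:psi.omega.A.action}, with all of the genuine geometric work having been carried out in the proofs of those two results.
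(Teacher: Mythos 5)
Your proof is correct and follows essentially the same route as the paper's: factor out the sign $(-1)^{\ell(A,B)}$, apply $\sigma^*$ multiplicatively across the wedge, and invoke \Cref{lem:omega.WA.action} for $\ol\omega_{W,A}$ and \Cref{cor:psi.omega.A.action} for $\psi_B$. Your additional verification that $(A,B)$ is a $\sigma^{-1}W$-adapted pair is a sensible bit of care the paper leaves implicit, but it does not change the argument.
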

\begin{proof}
	Recall that, by definition, $\ol\eta_{W, A; B} = (-1)^{\ell(A, B)} \ol\omega_{W, A} \psi_B$. Now the claim follows applying \Cref{lem:omega.WA.action} and \Cref{cor:psi.omega.A.action} in order to describe the action of $\G$ on $\ol\omega_{W, A}$, resp.~on $\psi_B$.
\end{proof}

\subsection{Actions associated to primitive $p$-covers.}
We are now ready to inspect the action of $\G$ induced on the generators of the cohomology algebra $H^\bul(M(\B), \Z)$ in the special case where the action leaves each hypersurface $H_i\in \B$ invariant but does permute their connected components (this is the situation of interest for the application we have in mind, see \Cref{motivation-setup}). 

\begin{proposition}\label{prop:action.on.cohomology}
Let $\B = \{H_1, \dots, H_n\}$ be an essential toric arrangement in $U \cong (\C^\times)^d$, let $\G \leq U$ be a finite subgroup such that for all $ \sigma \in \G$ we have $\sigma \cdot H_i = H_i$ for $1 \leq i \leq n$. Suppose in addition that the action of $\G$ on the layers of $\B$ of dimension $0$ has at least one nonsingleton orbit. 
Then the following hold:
\begin{enumerate}
	\item $\G$ fixes elementwise the subalgebra of $H^\bul(M(\B), \Z)$ generated by its elements of degree $1$; and
	\item 
 $\G$ acts nontrivially on $H^d(M(\B), \Z)$. %
\end{enumerate}
	In particular the cohomology algebra $H^\bul(M(\B), \Z)$ is not generated in degree one.
\end{proposition}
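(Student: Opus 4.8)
The plan is to leverage the two facts already in hand: that $H^\bul(M(\B),\Z)$ is generated by the classes $\ol\eta_{W,A;B}$ (\Cref{prop:generators}), graded by $|A\sqcup B|$, and that $\G$ permutes these classes by the rule $\sigma^*\ol\eta_{W,A;B}=\ol\eta_{\sigma^{-1}W,\,A;B}$ (\Cref{prop:eta.WAB.action}). Hence the action of $\G$ on a generator is controlled entirely by its action on the indexing layer $W$, and the whole problem reduces to deciding which layers $\G$ fixes and which it moves.

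For (1) I would observe that the subalgebra generated in degree $1$ is generated by the degree-$1$ classes, i.e.\ those $\ol\eta_{W,A;B}$ with $|A\sqcup B|=1$. These are of two kinds: the forms $\ol\eta_{U,\emptyset;\{i\}}=\pm\psi_i$, indexed by the maximal layer $W=U$, and the forms $\ol\eta_{W,\{i\};\emptyset}=\pm\ol\omega_{W,\{i\}}$, indexed by a connected component $W$ of a single hypersurface $H_i$. The first kind is fixed by \Cref{lem:psi.action}. For the second, each hypersurface $H_i$ is connected (\Cref{rem:connected.H}, via primitivity of the defining characters, as in the setting of \Cref{motivation-setup}), so $W=H_i$ is the unique such component and the hypothesis $\sigma H_i=H_i$ forces $\sigma^{-1}W=W$, whence $\sigma^*\ol\eta_{W,\{i\};\emptyset}=\ol\eta_{W,\{i\};\emptyset}$. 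Thus every degree-$1$ generator is fixed; since $\G$ acts by graded algebra automorphisms, it fixes all products of degree-$1$ classes, i.e.\ it fixes the degree-$1$-generated subalgebra elementwise.

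For (2) I would use the hypothesis to pick a $0$-dimensional layer $W$ lying in a nonsingleton $\G$-orbit, so that $W':=\sigma^{-1}W\neq W$ for some $\sigma\in\G$. Because $W$ has dimension $0$, hence rank $d$, one can choose an independent $d$-subset $A$ of the indices of hypersurfaces through $W$ for which $W$ is a connected component of $H_A$; then $(A,\emptyset)$ is $W$-adapted, $\ol\eta_{W,A;\emptyset}$ is a top-degree ($=d$) generator, and $\sigma^*\ol\eta_{W,A;\emptyset}=\ol\eta_{W',A;\emptyset}$. Here $W'$ is again a component of $H_A$, since $\sigma^{-1}H_A=\bigcap_{i\in A}\sigma^{-1}H_i=H_A$, so the right-hand class is also a legitimate top-degree generator. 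It then remains to show that these two classes are genuinely distinct in $H^d(M(\B),\Z)$.

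The main obstacle is precisely this last step, since the permutation rule only tells us the two classes are interchanged, not that they differ. My approach would be to separate them by their iterated residue at the $0$-dimensional layer $W$: the residue map $\rho_W$ along the flag cut out by the hypersurfaces in $A$ sends $\ol\eta_{W,A;\emptyset}$ to a nonzero element, because by the averaging construction in \Cref{def:uniqueomega} the form $\ol\omega_{W,A}$ has its logarithmic singularities concentrated at the component $W$ and is normalized so that this top residue is a unit; whereas $\ol\eta_{W',A;\emptyset}$, whose form $\ol\omega_{W',A}$ is concentrated at the distinct component $W'$, has $\rho_W(\ol\eta_{W',A;\emptyset})=0$. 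Consequently $\ol\eta_{W,A;\emptyset}\neq\ol\eta_{W',A;\emptyset}$, so $\sigma^*$ is not the identity on $H^d(M(\B),\Z)$, proving (2). Finally the concluding assertion is immediate: were $H^\bul(M(\B),\Z)$ generated in degree $1$, then $H^d$ would lie in the subalgebra of (1), on which $\G$ acts trivially, contradicting (2); hence $H^\bul(M(\B),\Z)$ is not generated in degree one.
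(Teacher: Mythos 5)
Your overall architecture agrees with the paper's: part (1) is proved exactly as there (the degree-one generators are $e_{U,\emptyset;\{i\}}$ and $e_{H_i,\{i\};\emptyset}$, and both are fixed by \Cref{prop:eta.WAB.action}; your side remark on connectivity of the $H_i$ matches the paper's implicit assumption), and for part (2) you make the same reduction: pick a zero-dimensional layer $W$ in a nonsingleton orbit, an independent $d$-set $A$ with $W\in\pi_0(H_A)$, and compare $\ol\eta_{W,A;\emptyset}$ with $\sigma^*\ol\eta_{W,A;\emptyset}=\ol\eta_{W',A;\emptyset}$ where $W'=\sigma^{-1}W\neq W$. As you correctly identify, everything hinges on the distinctness of these two top-degree classes.

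That is precisely where your argument has a genuine gap. The assertion that $\ol\omega_{W,A}$ ``has its logarithmic singularities concentrated at the component $W$'' is false as stated: since $\sigma H_i=H_i$ for all $i$, the layers $W$ and $W'$ lie on the \emph{same} hypersurfaces $H_i$, $i\in A$, so $\ol\omega_{W,A}$ and $\ol\omega_{W',A}$ have identical polar divisor $\bigcup_{i\in A}H_i$; they differ only in iterated residue data along flags ending at the zero-dimensional strata. Concretely, by \Cref{def:uniqueomega} the pullback $f^*(\ol\omega_{W',A})$ along a separating cover is an average of the forms $\ol\omega^{\wt U}_{A,q}$ over $q\in f^{-1}(p')$ with $p'\in W'$, and your vanishing claim $\rho_W(\ol\eta_{W',A;\emptyset})=0$ amounts to showing that no $q\in f^{-1}(p')$ lies in $H^{\wt U}_{A,q_0}$ for $q_0\in f^{-1}(p)$, $p\in W$. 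This is not automatic and does not follow from the definition of separating cover quoted in \Cref{def:separating.cover}, which only posits the \emph{existence} of some $q_i$ with $f(H^{\wt U}_{A,q_i})=W$; moreover, you have not addressed why an iterated residue map is even well defined on $H^d(M(\B),\Z)$ in the toric (as opposed to linear) setting. These two points are the entire nontrivial content of the step. The paper resolves them differently: it uses the filtration $\mathcal F_iH^\bul(M(\B),\Z)=\sum_{j\leq i}H^\bul(U,\Z)\cdot H^j(M(\B),\Z)$ of \cite{CDDMP20}, whose degree-$d$ graded piece splits as a direct sum over the zero-dimensional layers $p$ of the top Orlik--Solomon groups $H^d(M(\B[p]))$ of the local hyperplane arrangements; by \cite[Lemma 5.14]{CDDMP20} the class $e_{p,A;\emptyset}$ maps to a scalar multiple of the standard Orlik--Solomon generator in the summand at $p$, and the independence theorems \cite[Theorems 3.43 and 3.119]{OT92} then give distinctness. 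Note the paper also chooses $A\in{\rm nbc}(\B)$ (rather than an arbitrary independent $d$-set) so that these cited results apply. Your residue heuristic is the right intuition --- it is morally the ``leading term at $W$'' that the graded piece computes --- but as written the key vanishing is asserted on the basis of an incorrect description of the polar locus; the repair is to replace that paragraph by the filtration argument just described.
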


\begin{remark}[No-broken-circuit sets]\label{rem:nbc} 
In view of the following proof let us recall that, using the setup of \Cref{prop:action.on.cohomology}, the set ${\rm nbc}(\B)$ of no-broken-circuit sets is given by all index sets $A\subset [n]$ such that $A$ is independent and if $A\cup \{i\}$ is dependent for some $i\in [i]$, then  $i\geq \min A$. This is a standard notion in the theory of matroids and hyperplane arrangements, which generalizes to the setup of toric arrangements, see, e.g., \cite{CDDMP20} and references therein.
\end{remark}

\begin{proof}
By \Cref {prop:generators}, $H^\bul(M(\B), \Z)$ is generated by the cohomology classes:
\[
	e_{W, A ; B} = [\ol\eta_{W, A; B}]
\]
where $W$ is a layer of $\B$ and $(A,B)$ is a $W$-adapted pair. 
The generator $e_{W, A; B}$ has degree $|A\sqcup B|$, so the only generators of degree $1$ must either have  $A=\emptyset$ (so $W$ is equal to the full torus $U$ and $B$ must be a singleton) or $A = \{i\}$ for some $i\in [n]$ (so that $W = H_i$ and $B$ is empty). In other words the generators in degree $1$ are the elements 
$$
e_{U, \emptyset;\{i\}} \quad
\textrm{ and }
\quad
	e_{H_i, \{i\};\emptyset}\quad
 \textrm{ for }
 i=1,\ldots n.
$$

Since for every $\sigma \in \G$ we have $\sigma \cdot H_i = H_i$ for all $H_i \in \B$ and obviously $\sigma \cdot U = U$, by Proposition \ref{prop:eta.WAB.action} we obtain:
\begin{align*}
	\sigma \cdot e_{U,\emptyset;\{i\}} &= [\sigma^* \ol\eta_{U,\emptyset;\{ i\}}] = [\ol\eta_{U,\emptyset; \{i\}}] = e_{U,\emptyset;\{i\}} \\
	\sigma \cdot e_{H_i,\{i\}; \emptyset} &= [\sigma^* \ol\eta_{H_i,\{i\}; \emptyset}] = [\ol\eta_{H_i,\{i\}; \emptyset}] = e_{H_i,\{i\}; \emptyset}.
\end{align*}
Hence the subalgebra generated by these elements is fixed pointwise by the action of $\G$, and (i) follows.\medskip \\
Now consider the set $\lay_0 =\{p\in \lay(\B) \mid \dim(p)=0\}$ of all layers of $\B$ of dimension $0$.
By assumption there are at least two distinct elements of $\lay_0$ in the same $\G$-orbit, say $p'$ and $p''=\sigma p' \neq p'$ for some $\sigma\in \G$. Then, for every $H\in \B$ with $p'\in H$ we have $p''\in \sigma H$, hence the pair $(A,\emptyset)$ is adapted for $p'$ if and only if it is adapted for $p''$. Let $A\in \rm{nbc}(\B)$ be such that $(A,\emptyset)$ is $p'$-adapted (such an $A$ exists, see e.g.~\cite[Theorem 6.14]{CDDMP20}). In order to prove (ii) it is enough to show that
\begin{equation}\label{permeta}
\sigma^*\ol\eta_{p'', A; \emptyset}=\ol\eta_{p', A; \emptyset}\neq \ol\eta_{p'', A; \emptyset}\end{equation} 
The equality on the left-hand side holds by \Cref{prop:eta.WAB.action}. In order to prove the inequality in \Cref{permeta} we show that the  cohomology classes $e_{p, A; \emptyset}$ with $p\in \lay_0$ and $A\in {\rm nbc}(\B)$ are distinct. 
To this end, following \cite{CDDMP20} we consider the filtration 
\[
	{\mathcal F}_i H^\bul(M(\B), \Z) := \sum_{j \leq i} H^\bul(U, \Z) \cdot H^j(M(\B), \Z) 
\] 

whose associated graded modules are given, for each degree $k$, by
\[
	{\rm gr}_k(H^\bul(M(\B), \Z)) = \bigoplus_{\underset{{\rm codim}(W) = k}{W \in \lay(\B)}} H^\bul(W) \otimes H^k(M(\B[W]))
\]
where $\B[W]$ is the linear hyperplane arrangement induced on any tangent space $T_xU$ for $x\in W$ generic. Then, specializing to $k=d=\dim(U)$,
\[
	{\rm gr}_d(H^\bul(M(\B), \Z))=\bigoplus_{p\in \lay_0
 } H^\bul(p) \otimes H^d(M(\B[p])) \simeq 
 \bigoplus_{p\in \lay_0
  }  H^d(M(\B[p]))
\]
(all $H^\bul(p)$ are trivial since $\dim(p)=0$). Here
 $H^d(M(\B[p])$ is the top-degree part of the Orlik-Solomon algebra of the central arrangement $\B[p]$, and  \cite[Lemma 5.14]{CDDMP20} shows that $e_{p, A; \emptyset}$ maps to (a scalar multiple of) the standard Orlik-Solomon generator associated to the independent set $A$. In particular, if ${\rm nbc} (\B[p])$ denotes the set of no-broken circuit sets (called ``$\chi$-independent'' in \cite{OT92}) of the hyperplane arrangement $\B[p]$, then  ${\rm nbc} (\B[p]) \subseteq {\rm nbc} (\B)$: indeed $(A,\emptyset)$ is $W$-adapted for $A\in \mathrm{nbc} (\B)$ if and only if $A\in\mathrm{nbc}(\B[p])$. Now by \cite[Theorem 3.43 and Theorem 3.119]{OT92} the classes 
 $$e_{p, A; \emptyset},
 \textrm{ with }
 p\in \lay_0, 
 \textrm{ and }
A\in {\rm nbc}(\B[p])
 $$
 form a full-rank independent subset of ${\rm gr}_d(H^\bul(M(\B), \Z))$. In particular, $e_{p', A; \emptyset}$ and $e_{p'', A; \emptyset}$ are distinct. This concludes the proof of \Cref{permeta} and thus of claim (ii).

\end{proof}

Putting together the results of the previous sections we obtain the following theorem.

\begin{theorem}\label{th:p.cover.cohomology.not.one.generated}
	Let $\B$ be a central, essential, primitive toric arrangement in $T \cong (\C^\times)^d$. Then for prime numbers $p$ for which the map $\phi_p : \B \to \mathbb P^1(\F_p^d)$ defined in \Cref{prop:existence.primitive.p.cover} is not surjective, there exists a primitive $p$-cover $f : U \to T$ such that the cohomology algebra $H^\bul(M(\B^U), \Z)$ of the lifted arrangement $\B^U$ is not generated in degree one. 
\end{theorem}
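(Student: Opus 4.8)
The plan is to combine the two main results of the preceding sections: \Cref{prop:existence.primitive.p.cover}, which produces a primitive $p$-cover under the non-surjectivity hypothesis, and \Cref{prop:action.on.cohomology}, which detects failure of degree-one generation from a suitable deck action. First I would invoke \Cref{prop:existence.primitive.p.cover}: since $\phi_p$ is not surjective, there is a primitive $p$-cover $f : U \to T$ which, by the last sentence of that statement, can be taken of degree exactly $p$. Writing $\iota : \Lambda \hookrightarrow \wt\Lambda$ for the corresponding lattice inclusion, the deck group is $\G = \ker(f) \cong \wt\Lambda/\Lambda$, a group of order $p$ sitting inside $U$ and acting by (free) left multiplication. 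Because the cover is primitive, each lifted hypersurface $H_i^U$ is connected, so $\B^U = \B^U_c$ is itself a central, essential, primitive toric arrangement in $U$; essentiality is inherited because $\iota$ is an injection of full rank, so the lifted characters still span a full-rank sublattice of $\wt\Lambda$. Moreover, as $\G = \ker(f)$, every $\sigma \in \G$ fixes each fiber of $f$ setwise, whence $\sigma H_i^U = \sigma f^{-1}(H_i) = f^{-1}(H_i) = H_i^U$; this is precisely the invariance hypothesis of \Cref{prop:action.on.cohomology}.

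The crux is then to verify the remaining hypothesis of \Cref{prop:action.on.cohomology}, namely that the induced $\G$-action on the dimension-zero layers of $\B^U$ has at least one nonsingleton orbit. Since $\B$ is essential it possesses a dimension-zero layer $P$, which is an isolated point of the intersection $\bigcap_{i \in A}H_i$ for $A = \{ i : P \in H_i \}$. As $f$ is a covering map the fiber $f^{-1}(P)$ consists of exactly $p$ points, and each such point $q$ is, through the local homeomorphism $f$, an isolated point of $\bigcap_{i \in A} H_i^U$ and hence a dimension-zero layer of $\B^U$. Because $f$ is a regular (Galois) covering — being the quotient of $U$ by the free translation action of $\G = \ker(f)$ — the deck group acts transitively on each fiber. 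Thus $f^{-1}(P)$ is a single $\G$-orbit of dimension-zero layers of cardinality $p \geq 2$, which is the desired nonsingleton orbit.

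With all hypotheses in place, \Cref{prop:action.on.cohomology} applies to $\B^U$ and $\G$, yielding that $\G$ fixes elementwise the subalgebra of $H^\bul(M(\B^U),\Z)$ generated in degree one but acts nontrivially on $H^d(M(\B^U),\Z)$; in particular $H^\bul(M(\B^U),\Z)$ is not generated in degree one, as required. I expect the main obstacle to lie in the middle paragraph: one must be careful to confirm both that the points of $f^{-1}(P)$ are genuine dimension-zero layers of the lifted arrangement (and not merely points lying on lifted hypersurfaces) and that the regularity of the cover makes them a single orbit rather than several. By contrast, the invariance $\sigma H_i^U = H_i^U$ and the essentiality of $\B^U$ are comparatively routine, resting only on the defining property of a primitive cover and on the full-rank injectivity of $\iota$.
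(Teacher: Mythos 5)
Your proposal is correct and follows essentially the same route as the paper: take the degree-$p$ primitive cover from \Cref{prop:existence.primitive.p.cover}, let the deck group $\G$ act by translations, verify the invariance of each (connected, by primitivity) lifted hypersurface and the existence of a nonsingleton $\G$-orbit of dimension-zero layers, and conclude via \Cref{prop:action.on.cohomology}. The only cosmetic difference is that the paper exhibits the nonsingleton orbit concretely as $f^{-1}(\mathbf{1})$, using that centrality and essentiality make the identity $\mathbf{1}$ a dimension-zero layer, whereas you take the fiber over an arbitrary dimension-zero layer and invoke the regularity of the cover — both verifications are sound.
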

\begin{proof}

From the proof of \Cref{prop:existence.primitive.p.cover} we know that we can find a primitive $p$-cover $f : U \to T$ of degree $p$, i.e. a cyclic cover. Consider the cyclic deck transformation group $\G$, which can be identified with the fiber $f^{-1}({\bf 1}) \leq U$ of the identity element ${\bf 1} \in T$, acting by translations on the torus $U$. Notice that the action is fixed-point-free: in particular, since ${\bf 1}$ is a layer of $\B$ of dimension $0$, the set $f^{-1}({\bf 1})$ is a $\G$-orbit of layers of $\B^U$ of dimension $0$ with $p>1$ elements. 
Moreover, for every $H \in \B$ the lift $\hat H = f^{-1}(H)$ is connected because the cover is primitive.
Since the action of $\G$ permutes the components of each lifted hypertorus, we have that $\G \cdot \hat H = \hat H$ for every $\hat H\in \B^U$. We are then in the hypotheses of \Cref{prop:action.on.cohomology}, hence we conclude that $H^\bul(\B^U, \Z)$ is not generated in degree one.
\end{proof}

\section{The main theorem and applications to pure braid groups
}\label{sec:bloch.kato}

Here we apply the results of Section 4 the class of supersolvable toric arrangements. Recall that the complement $M(\B)$ of a supersolvable toric arrangement $\B$ is a $K(\pi, 1)$ space, thus any consideration about the cohomology of $M(\B)$ translates to the cohomology of the fundamental group $G(\B)$. We will make use of this observation in order to obtain statements about the Bloch-Kato property of pro-$p$ completions of $G(\B)$, first in general and then in the special case when $G(\B)$ is a pure braid group. \medskip\\
In particular, the techniques developed in the previous sections allow us to construct for almost all primes $p$ finite covers of index $p$ of supersolvable arrangements with cohomology not generated in degree $1$ (hence non quadratic). Passing to these finite covers will provide finite index subgroups with non-quadratic cohomology witnessing the failure of the Bloch-Kato property.

\subsection{On the Bloch-Kato property of supersolvable arrangements}

We start by relating the $\F_p$ cohomology of a supersolvable arrangement's complement with that of the pro-$p$ completion of its fundamental group.

\begin{proposition}\label{prop:cohom.comparison}
    Let $\B$ be a supersolvable toric arrangement in $T\cong (\C^*)^d$  
    and let $G(\B)$ be the fundamental group of the complement $M(\B)$. Then for any prime number $p$ we have an isomorphism of graded $\F_p$-algebras:
        \[
            H^\bul(G(\B)_{\hat p}, \F_p) \cong H^\bul(M(\B), \F_p)
        \]
\end{proposition}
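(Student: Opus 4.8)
The plan is to deduce the isomorphism from general theory about pro-$p$ completions of groups whose cohomology is already well understood. The key structural fact I would invoke is \Cref{prop:strictly.supersolvable.quadratic} together with \Cref{prop:polyfree.and.Kpi1}: since $\B$ is supersolvable, $G(\B)$ is an iterated semidirect product of finitely generated free groups. Groups built this way are \emph{residually $p$} for every prime $p$ (free groups are residually $p$, and the class of residually-$p$ groups is closed under the relevant extensions when the quotient acts suitably), so the canonical map $\rho_{G,p}: G(\B) \to G(\B)_{\hat p}$ is injective and, more importantly, the group is ``good'' in the sense of Serre. The central goal is therefore to show that $G(\B)$ is a \emph{cohomologically good} group (in Serre's terminology), meaning that for every finite $p$-primary $G(\B)$-module the natural map $H^\bul(G(\B)_{\hat p}, -) \to H^\bul(G(\B), -)$ is an isomorphism.

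First I would establish goodness for the relevant class of groups. Finitely generated free groups are good, and goodness is preserved under extensions $1 \to N \to G \to Q \to 1$ provided $N$ and $Q$ are good, $N$ is finitely generated, and $H^i(N, M)$ is finite for all finite modules $M$ (a standard lemma, e.g.\ from Serre's \emph{Galois Cohomology}, Exercise I.2.6, or the treatment in Grunewald--Jaikin-Zapirain--Zalesskii). Because $G(\B)$ is obtained by finitely many iterated extensions of finitely generated free groups, an induction on the number of factors in the iterated semidirect product decomposition yields that $G(\B)$ is good and that its cohomology with finite coefficients is finite in each degree. I would carry out this induction explicitly, checking at each stage that the normal free subgroup is finitely generated (which holds by \Cref{prop:polyfree.and.Kpi1}) so that the finiteness hypothesis in the extension lemma is met.

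Once goodness is in hand, the argument is short. By goodness applied to the trivial module $\F_p$ we obtain an isomorphism of graded $\F_p$-vector spaces $H^\bul(G(\B)_{\hat p}, \F_p) \cong H^\bul(G(\B), \F_p)$, and this isomorphism is induced by $\rho_{G,p}^*$, hence it is a morphism of graded $\F_p$-algebras and therefore a ring isomorphism. Finally, since $\B$ is supersolvable the complement $M(\B)$ is a $K(G(\B),1)$ space by \Cref{prop:polyfree.and.Kpi1}, so $H^\bul(G(\B), \F_p) \cong H^\bul(M(\B), \F_p)$ as graded $\F_p$-algebras. Composing the two isomorphisms gives the desired
\[
    H^\bul(G(\B)_{\hat p}, \F_p) \cong H^\bul(M(\B), \F_p).
\]

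The main obstacle I anticipate is verifying cohomological goodness cleanly, specifically confirming that the extension lemma applies at each inductive step. The delicate point is the finiteness condition: one needs $H^i(F_{n_j}, M)$ to be finite for all finite $p$-torsion modules $M$, which follows from finite generation of the free groups, but one must make sure the module structure arising from the semidirect product action is handled correctly and that goodness of the quotient $\overline{G}$ is available by the inductive hypothesis. An alternative route, should the goodness argument prove cumbersome, would be to invoke the fact that an iterated almost-direct product of free groups has the property that its pro-$p$ completion cohomology is computed by the associated graded / Koszul structure directly; but I expect the goodness approach via Serre's extension lemma to be the most transparent and self-contained.
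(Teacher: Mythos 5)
Your proposal follows the paper's proof in outline almost exactly: the paper's argument is precisely (1) supersolvability gives, via \Cref{prop:polyfree.and.Kpi1}, that $G(\B)$ is an iterated semidirect product of finitely generated free groups, (2) the assertion that such groups are $p$-good in the sense of Serre, so that $\rho_{G(\B),p}^*$ is an isomorphism of graded $\F_p$-algebras, and (3) the $K(G(\B),1)$ property of $M(\B)$ to identify group cohomology with the cohomology of the complement. The only cosmetic difference is at the end: the paper identifies $H^\bul(G(\B),\Z)\cong H^\bul(M(\B),\Z)$ and then uses torsion-freeness of the integral cohomology to tensor with $\F_p$, whereas you work with $\F_p$ coefficients throughout; both are fine.

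Where you go beyond the paper is in trying to \emph{prove} the $p$-goodness claim, which the paper simply asserts, and there your argument has a genuine subtlety. The extension lemma you cite (Serre's \emph{Galois Cohomology} exercise, and the Grunewald--Jaikin-Zapirain--Zalesskii treatment) concerns goodness with respect to the \emph{profinite} completion. For the pro-$p$ completion the analogous induction requires more: one needs the completed sequence $1\to \overline{N}\to G_{\hat p}\to Q_{\hat p}\to 1$ to be exact with $\overline{N}\cong N_{\hat p}$, and this fails for general semidirect products (pro-$p$ completion is not exact; the closure of $N$ in $G_{\hat p}$ can be a proper quotient of $N_{\hat p}$). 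The standard sufficient condition is that $Q$ acts trivially, or at least unipotently, on $H_1(N,\F_p)$ --- automatic for almost-direct products, i.e.\ the \emph{strictly} supersolvable case of \Cref{prop:strictly.supersolvable.quadratic}, but not guaranteed by the mere semidirect product decomposition available for supersolvable $\B$. Your parenthetical ``when the quotient acts suitably'' gestures at this, but the induction as written does not verify it at each stage, so as stated the goodness step is incomplete for the full generality of the proposition; to be fair, the paper's own proof asserts $p$-goodness with no more justification, so your proposal reproduces, and attempts to repair, exactly the step the paper leaves implicit --- the repair just needs the $p$-adapted version of the extension lemma together with control of the action on $H_1$ of each free kernel.
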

\begin{proof}
    Via \Cref{prop:polyfree.and.Kpi1}, supersolvability of $\B$ implies that the fundamental group $G(\B)$ is an iterated semidirect product of free groups. Such groups are $p$-good in the sense of Serre for all primes $p$. In particular, the pro-$p$ completion homomorphism $\rho_{G(\B), p} : G(\B) \to G(\B)_{\hat p}$ described in \S\ref{ssec:procom} induces an isomorphism of graded $\F_p$-algebras
    \[
        (\rho_{G(\B),p})^* : H^\bul(G(\B)_{\hat p}, \F_p) \cong H^\bul(G(\B), \F_p).
    \]
    Again from \Cref{prop:polyfree.and.Kpi1} we know that the complement manifold $M(\B)$ of a supersolvable toric arrangement is a $K(G(\B), 1)$ space, thus we have an isomorphism:
    \[
        H^\bul(G(\B), \Z) \cong H^\bul(M(\B), \Z)
    \]
    Finally, since the integral cohomology algebra is torsion-free, we obtain the assertion by tensoring the cohomology with $\F_p$.
\end{proof}

\begin{corollary}\label{cor:quadratic.cohom}
If $\B$ is a strictly supersolvable toric arrangement, the pro-$p$ group $G(\B)_{\hat p}$ has quadratic $\F_p$-cohomology.
\end{corollary}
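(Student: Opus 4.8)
The plan is to combine the comparison isomorphism of \Cref{prop:cohom.comparison} with the integral quadraticity supplied by \Cref{prop:strictly.supersolvable.quadratic}, and then to verify that quadraticity is preserved under the change of coefficients $\Z\to\F_p$. Since every strictly supersolvable arrangement is in particular supersolvable (a chain of TM-ideals is a fortiori a chain of M-ideals), \Cref{prop:cohom.comparison} gives an isomorphism of graded $\F_p$-algebras $H^\bul(G(\B)_{\hat p},\F_p)\cong H^\bul(M(\B),\F_p)$. It thus suffices to show that $H^\bul(M(\B),\F_p)$ is quadratic.

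By \Cref{prop:strictly.supersolvable.quadratic} the integral cohomology $A:=H^\bul(M(\B),\Z)$ is a quadratic algebra, so $A\cong T^\bul(V)/(R)$ with $V=A^1$ and $R\subseteq T^2(V)$ (if $\B$ is not already essential one first passes to its essentialization, which by \Cref{lem:essentialization} has the same poset of layers and a complement differing only by a torus factor, so this costs nothing). As recalled in the proof of \Cref{prop:cohom.comparison}, $A$ is torsion-free; being also finitely generated, $V=A^1$ is a finitely generated free $\Z$-module, and the universal coefficient theorem yields a ring isomorphism $H^\bul(M(\B),\F_p)\cong A\otimes_\Z\F_p$. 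The task therefore reduces to proving that $A\otimes_\Z\F_p$ is again quadratic.

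This last descent is the only point that needs care, and it works precisely because $V$ is free. Freeness gives $T^\bul(V)\otimes_\Z\F_p\cong T^\bul(\overline{V})$ and $T^2(V)\otimes_\Z\F_p\cong T^2(\overline{V})$, where $\overline{V}:=V\otimes_\Z\F_p$. Applying the right-exact functor $-\otimes_\Z\F_p$ to the presentation $(R)\to T^\bul(V)\to A\to 0$ and identifying the image of the two-sided ideal $(R)$ with the two-sided ideal generated by the image $\overline{R}$ of $R$ in $T^2(\overline{V})$, I would obtain $A\otimes_\Z\F_p\cong T^\bul(\overline{V})/(\overline{R})$ with $\overline{R}\subseteq T^2(\overline{V})$. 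Since $\overline{V}=(A\otimes_\Z\F_p)^1$, this exhibits $A\otimes_\Z\F_p$ as generated in degree one with relation ideal generated in degree two, i.e.\ as a quadratic $\F_p$-algebra, and the displayed isomorphisms then give the claim. The potential obstacle is that base change could in principle create new relations in higher degrees, enlarging the relation ideal beyond degree two; freeness of $V$, hence flatness of each graded piece of $T^\bul(V)$, is exactly what rules this out.
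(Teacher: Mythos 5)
Your proof is correct and follows essentially the same route as the paper: invoke \Cref{prop:strictly.supersolvable.quadratic} for quadraticity of the integral cohomology, use torsion-freeness to descend quadraticity along $\Z\to\F_p$, and conclude via the comparison isomorphism of \Cref{prop:cohom.comparison}. The only difference is presentational — your careful verification that tensoring the presentation $T^\bul(V)/(R)$ with $\F_p$ preserves quadraticity (via freeness of $V$ and right-exactness) is exactly the step the paper compresses into the parenthetical remark that torsion-free quadratic cohomology remains quadratic over $\Z$ and $\F_p$.
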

\begin{proof}
By \Cref{prop:strictly.supersolvable.quadratic}, the cohomology algebra of $M(\B)$ over $\Q$ is quadratic (and such cohomology is torsion-free hence it is also quadratic over $\Z$ and $\F_p$). The claim now follows by \Cref{prop:cohom.comparison}
\end{proof}
\noindent
In light of \Cref{cor:quadratic.cohom} it is natural to ask whether any of such pro-$p$ groups have the Bloch-Kato property. The following theorem states that, generically, this property does not hold.

\begin{theorem}\label{th:arrangement.group.non.bk} Let $\B$ be an essential, primitive, supersolvable toric arrangement in $T = (\C^\times)^d$ with defining set of characters $X$
and recall the map $\phi_p : X \to \mathbb P^1(\F_p^d)$ from \Cref{prop:existence.primitive.p.cover}. Let $G(\B)$ be the fundamental group of the complement space $M(\B)$. Then for all prime numbers $p$ for which $\phi_p$ is not surjective, the pro-$p$ completion $G(\B)_{\hat p}$ is not Bloch-Kato.
\end{theorem}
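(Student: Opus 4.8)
The plan is to produce, for each prime $p$ with $\phi_p$ not surjective, an explicit closed subgroup of $G(\B)_{\hat p}$ whose mod-$p$ cohomology fails to be a quadratic algebra; by \Cref{def:bk1} this alone rules out the Bloch-Kato property. The witnessing subgroup will come from the primitive $p$-cover constructed earlier.

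First I would invoke \Cref{th:p.cover.cohomology.not.one.generated}: since $\phi_p$ is not surjective, there is a primitive $p$-cover $f : U \to T$ of degree $p$ such that $H^\bul(M(\B^U),\Z)$ is not generated in degree one. Because $f$ is the restriction of a homomorphism of tori with kernel the deck group $\G \cong \Z/p$, the induced map $M(\B^U) \to M(\B)$ is a regular covering, so $G(\B^U) = \pi_1(M(\B^U))$ sits as a normal subgroup of index $p$ in $G(\B)$ with cyclic quotient. By \Cref{BUss} the lifted arrangement $\B^U$ is again supersolvable, so \Cref{prop:cohom.comparison} is available for it and $G(\B^U)$ is $p$-good.

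Next I would identify the relevant closed subgroup. Writing $N := G(\B^U) \unlhd G(\B)$, the closure $\overline N$ of $N$ in $G(\B)_{\hat p}$ is open of index $p$, and I claim $\overline N \cong G(\B^U)_{\hat p}$. The point is that the pro-$p$ topology that $N$ inherits from $G(\B)_{\hat p}$, with basis $\{N \cap M : M \in \mathcal N_p(G(\B))\}$, coincides with its intrinsic pro-$p$ topology: one inclusion is immediate, and for the other, given $M' \unlhd N$ with $p$-power quotient one passes to its normal core $C$ in $G(\B)$, which still has $p$-power index because $[G(\B):N]=p$, so that $C \in \mathcal N_p(G(\B))$ and $N \cap C = C \subseteq M'$. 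With this identification in hand, \Cref{prop:cohom.comparison} applied to $\B^U$ gives
\[
    H^\bul(\overline N, \F_p) \cong H^\bul(G(\B^U)_{\hat p}, \F_p) \cong H^\bul(M(\B^U), \F_p),
\]
and it remains only to transport the non-generation in degree one from $\Z$- to $\F_p$-coefficients: since the integral cohomology of a supersolvable arrangement is torsion-free and the $\G$-action of \Cref{prop:action.on.cohomology} fixes every product of degree-one classes while permuting a $\Z$-basis of the top associated-graded module nontrivially, the same conclusion survives reduction modulo $p$. Hence $H^\bul(\overline N,\F_p)$ is not generated in degree one, a fortiori not quadratic, and $G(\B)_{\hat p}$ is not Bloch-Kato.

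The step I expect to be the main obstacle is the comparison $\overline N \cong G(\B^U)_{\hat p}$ together with the ensuing cohomology isomorphism. One must ensure that the pro-$p$ topology induced from the ambient completion is not strictly coarser than the intrinsic one (this is precisely where normality of the cover and the index being exactly $p$ enter, via the core argument) and that $p$-goodness of $G(\B^U)$ permits replacing the cohomology of the discrete group by that of its pro-$p$ completion. The persistence of non-generation in degree one modulo $p$ is a secondary point, which I would settle by combining torsion-freeness of the integral cohomology with a rereading of \Cref{prop:action.on.cohomology} over $\F_p$.
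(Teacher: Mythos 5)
Your proposal is correct and follows essentially the same route as the paper's proof: invoke \Cref{th:p.cover.cohomology.not.one.generated} to obtain the primitive $p$-cover, use \Cref{BUss} together with \Cref{prop:cohom.comparison} on the lifted (still supersolvable) arrangement, and realize $G(\B^U)_{\hat p}$ as a closed index-$p$ subgroup of $G(\B)_{\hat p}$ with non-quadratic $\F_p$-cohomology. The only differences are expository: where the paper cites \cite{RZ10} for identifying the closure of $K=G(\B^U)$ in $G(\B)_{\hat p}$ with $K_{\hat p}$, you prove it directly by the normal-core cofinality argument (correctly noting the normality of $K$ coming from regularity of the cover, which the paper leaves implicit), and you spell out the reduction of degree-one non-generation from $\Z$- to $\F_p$-coefficients via torsion-freeness and the $\G$-action on the top associated-graded basis, a step the paper also passes over silently.
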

\begin{proof}    

    For any essential and primitive toric arrangement $\B$ we know from \Cref{th:p.cover.cohomology.not.one.generated} that for all primes $p$ for which the map $\phi_p : X \to \mathbb P^1(\F_p^d)$ is not surjective, there exists a primitive $p$-cover $f : U \to T$. Moreover, if $\B^U=f^{-1}(\B)$ is the arrangement in $U$ obtained by lifting $\B$, we have that the fundamental group $G(\B^U)$ can be identified with a subgroup $K$ of $G(\B)$ of index $p$ and that the cohomology algebra $H^\bul(M(\B^U), \Z)$ is not generated in degree $1$ -- in particular, it is not quadratic.

    Now, assume $\B$ to be supersolvable. Then  by \Cref{BUss} the arrangement $\B^U$ is supersolvable as well, and we know from  \Cref{prop:cohom.comparison} that $H^\bul(K_{\hat p}, \F_p)$ is not quadratic. For all relevant primes $p$, consider  the pro-$p$ completion homomorphism $\rho_{G(\B), p} : G(\B) \to G(\B)_{\hat p}$. Using the fact that $G(\B)$ is residually $p$ and that $K$ has finite index in $G(\B)$, the image $\rho_{G(\B), p}(K)$ is isomorphic with the pro-$p$ completion $K_{\hat p}$  %
    \cite[Lemma 3.1.4 (a) and Proposition 3.2.2 (a)]{RZ10}. In conclusion, we have found a closed subgroup $K_{\hat p}$ of the pro-$p$ completion $G(\B)_{\hat p}$ with non-quadratic $\F_p$ cohomology. In particular $G(\B)_{\hat p}$ is not Bloch-Kato. 
\end{proof}

\subsection{Application: pure braid groups}

Consider, as in \Cref{ex2}, the arrangements $\tbraid{n}$ of type $A_n$ in $T \cong (\C^\times)^n$, for $n > 2$. The essentialized arrangement $\tbraid{n}^e$ in the torus $(\C^\times)^{(n-1)}$  consists of $\binom{n}{2} = \frac{n(n-1)}{2}$ hyperplanes. 

\begin{remark}[The cases $n>3$ or $p>2$]\label{rem:largenp}
Whenever $n > 3$ and $p$ is a prime number, or $n = 3$ and $p > 2$, we have: 
\[
    |\mathbb P^1(\F_p^{n-1})| = \frac{p^{n-1}-1}{p-1} > \frac{n(n-1)}{2}
\]
thus the map $\phi_p$ can never be surjective and by \Cref{th:arrangement.group.non.bk} the fundamental group $G(\B_n^e)_{\hat p}$ is not Bloch-Kato. 
\end{remark}

In order to treat the remaining case ($n=3$, $p=2$) %
we  exhibit a covering $U$ of $(\C^\times)^2$ of degree $4$ such that the complement of the lifted arrangement $(\tbraid{3}^e)^U$ has non quadratic $\mathbb F_2$-cohomology.

\begin{lemma}\label{lem:explicit.cover}
    Let $U$ be the covering space of $T \cong (\C^\times)^2$ defined as in \S\ref{sec:lifting} by the injection $\mathbb Z^2 \to \mathbb Z^2$, $z\mapsto Az$ where $$A := \left(\begin{matrix}
        2 & 1 \\ 0 & -4
    \end{matrix}\right).$$
    Then the cohomology algebra of the complement of the arrangement $(\tbraid{3}^e)^U$ lifted from $\tbraid{3}^e$ is not generated in degree $1$.  
\end{lemma}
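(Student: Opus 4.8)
The plan is to dispose of the one case left open by \Cref{rem:largenp}, namely $n=3$, $p=2$: here $\tbraid{3}^e$ has exactly $3=|\mathbb P^1(\F_2^2)|$ defining characters, so $\phi_2$ is surjective and (as noted in the example following \Cref{cor:existence.primitive.p.covers}) $\tbraid{3}^e$ admits \emph{no} primitive $2$-cover. Hence the cover $f:U\to T$ attached to $A$ is a non-primitive $2$-cover, and \Cref{th:p.cover.cohomology.not.one.generated} cannot be invoked directly. Instead I would apply \Cref{prop:action.on.cohomology} to the essential arrangement $\B:=(\tbraid{3}^e)^U$ itself, using an appropriate subgroup of its deck group. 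The source of all the trouble, already visible in \Cref{exlift}, is that the lift $H_1^U=f^{-1}(H_1)$ splits into two connected components $H_{1,x}^U,H_{1,y}^U$, whereas $H_2^U$ and $H_3^U$ remain connected.

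First I would realize the deck group $\G=\ker f\cong\wt\Lambda/\Lambda$ as a nontrivial finite abelian $2$-group acting on $U$ by fixed-point-free translations. As $f$ is a homomorphism with kernel $\G$, we have $f(\gamma u)=f(u)$ for all $\gamma\in\G$, so every $\gamma$ preserves each fibre $f^{-1}(H_i)$ as a set; in particular $\G$ permutes $\{H_{1,x}^U,H_{1,y}^U\}$ and fixes the connected hypersurfaces $H_2^U,H_3^U$. This is exactly where the full group $\G$ fails hypothesis (i) of \Cref{prop:action.on.cohomology}: if $\G$ swaps $H_{1,x}^U$ and $H_{1,y}^U$ then it swaps the two degree-$1$ classes attached to them and does not fix the degree-$1$ generated subalgebra. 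I would therefore pass to the subgroup
\[
    \G_0:=\ker\bigl(\G\longrightarrow\operatorname{Sym}(\pi_0(H_1^U))\bigr)
\]
of deck transformations fixing each component of $H_1^U$. Since $|\pi_0(H_1^U)|=2$, the index $[\G:\G_0]$ divides $2$, so $\G_0$ is still nontrivial; by construction it now fixes $H_{1,x}^U$, $H_{1,y}^U$, $H_2^U$ and $H_3^U$, i.e.\ $\sigma\cdot H=H$ for every $H\in\B$ and every $\sigma\in\G_0$.

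It then remains to check the orbit hypothesis and conclude. The arrangement $\B$ is essential, its defining characters $\chi_1^U,\chi_2^U,\chi_3^U$ of \Cref{exlift} spanning a full-rank sublattice of $\wt\Lambda$. Because $\G_0$ is a nontrivial group acting on $U$ by fixed-point-free translations, it acts \emph{freely} on the finite, $\G_0$-invariant set of dimension-$0$ layers of $\B$ (the eight points of \Cref{fig3}); thus every orbit has cardinality $|\G_0|>1$, and in particular there is a nonsingleton orbit. Applying \Cref{prop:action.on.cohomology} to $\B$ and $\G_0$ gives at once that $H^\bul(M(\B),\Z)$ is not generated in degree $1$, which is the assertion; running the same argument with $\F_2$-coefficients (the top-degree classes $e_{p,A;\emptyset}$, $p\in\lay_0$, still forming a basis of the relevant graded piece) yields the non-quadratic $\F_2$-cohomology needed in the application to the braid groups.

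The step I expect to be the crux is the passage to $\G_0$ forced by the non-primitivity of the cover: one must ensure that cutting $\G$ down to the stabilizer of the components of $H_1^U$ does not trivialize the action on the dimension-$0$ layers. This is exactly what makes the explicit choice of $A$ (and the resulting component count $|\pi_0(H_1^U)|=2$, small relative to $|\G|$) important, since it guarantees $\G_0\neq\{1\}$; freeness of the translation action then does the rest, automatically producing the required nonsingleton orbit.
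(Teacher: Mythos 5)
Your proposal is correct, but it takes a genuinely different route from the paper's own proof. The paper argues numerically: from the M\"obius function of $\lay((\tbraid{3}^e)^U)$ (caption of \Cref{fig3}) it reads off the Poincar\'e polynomial $1+6t+21t^2$ of the complement and observes that degree-$1$ generation would force $\rk H^2 \le \binom{6}{2}=15<21$, so the cup product $H^1\wedge H^1\to H^2$ cannot surject. You instead recycle the equivariant machinery of \Cref{prop:action.on.cohomology}, applied not to the full deck group $\G\cong\Z/8\Z$ (which, as you correctly note, swaps $H^U_{1,x}$ and $H^U_{1,y}$ and so violates the hypothesis $\sigma H=H$) but to the kernel $\G_0$ of $\G\to\operatorname{Sym}(\pi_0(H_1^U))$, of index at most $2$ and hence nontrivial; your verifications (every hypersurface is $\G_0$-invariant, and the translation action is free on the eight $0$-dimensional layers, so all orbits are nonsingleton) are sound, and \Cref{prop:action.on.cohomology} then yields the claim. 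The one point needing explicit care is that $(\tbraid{3}^e)^U$ is \emph{not} central ($H^U_{1,y}$ misses the identity), and although the statement of \Cref{prop:action.on.cohomology} imposes no centrality, the proof of \Cref{lem:psi.action} derives $\ch{\chi_i}(\sigma)=1$ from $1\in H_i$, i.e.\ from centrality. This is easily repaired in your setting: since $\sigma\cdot H_{(\chi,b)}=H_{(\chi,\,\ch{\chi}(\sigma)b)}$, setwise invariance of each hypersurface already forces $\ch{\chi_i^U}(\sigma)=1$ for all $\sigma\in\G_0$ (concretely $\G_0=\{(1,\zeta)\mid \zeta^4=1\}$ annihilates $\chi_1^U=(1,0)$, $\chi_2^U=(1,-4)$, $\chi_3^U=(3,-4)$), and the rest of \Cref{sec:action} --- the forms $\omega^{\wt U}_{i,q}$ and the presentation of \Cref{prop:generators} --- is already formulated for arbitrary components. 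As for what each route buys: the paper's computation is short, self-contained and gives exact Betti numbers; yours avoids any M\"obius/Poincar\'e computation and exhibits a more general principle, namely that \Cref{th:p.cover.cohomology.not.one.generated} survives for non-primitive covers whenever the deck group contains a nontrivial subgroup stabilizing every component of the lifted arrangement --- at the cost of invoking the full weight of \Cref{sec:action}. Your closing remark on $\F_2$-coefficients is not needed for the lemma itself, which is an integral statement; the paper passes to $\F_2$ afterwards via torsion-freeness and \Cref{prop:cohom.comparison}.
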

\newcommand{\nascosto}[1]{}
\begin{proof}    
\def\nonso{\widetilde \B}
Write $\nonso$ for $(\tbraid{3}^e)^U$. This arrangement has been investigated in \Cref{exlift} and \Cref{fig3}. 
\nascosto{
    A quick computation based on \Cref{ex2} and \Cref{lem:essentialization} shows that the set of defining characters of $\tbraid{3}^e$ can be chosen as $X = \{\chi_1 = (1, 0), \chi_2 = (0, 1), \chi_3 = (1, 1)\}$, corresponding to the hypersurfaces:
    \begin{align*}
        H_1 &:= \{ (t_1, t_2) \in T \mid t_1 = 1 \} \\
        H_2 &:= \{ (t_1, t_2) \in T \mid t_2 = 1\} \\
        H_3 &:= \{ (t_1, t_2) \in T \mid t_1t_2 = 1 \}.
    \end{align*}

    The lifts via the covering map corresponding to the 
    matrix $A$ are then given by
    \begin{align*}
        H_1^U &:= \{ (u_1, u_2) \in U \mid u_1^2 = 1 \} \\
        H_2^U &:= \{ (u_1, u_2) \in U \mid u_1u_2^{- 4} = 1\} \\
        H_3^U &:= \{ (u_1, u_2) \in U \mid u_1^3u_2^{-4} = 1 \}.
    \end{align*}
    \def\nonso{\widetilde \B}

    In the remainder of this proof we will write $\nonso$ for $(\tbraid{3}^e)^U$. Notice that while $\nonso$ is supersolvable like $\tbraid{3}^e$, it is no longer strictly supersolvable. Indeed since $M$-ideals must be join-closed, any $M$ ideal must miss at least one among $H^U_2$ and $H^U_3$. But the intersection of either of those with any other hypersurface is disconnected.

\begin{figure}[h] %
\centering
\begin{tikzpicture}[x=15em,y=15em]

    \node[anchor=center] (A2) at (0, 1) {};
    \node[anchor=center] (B2) at (0.5, 1) {};
    \node[anchor=center] (C2) at (0.5, 1.5) {};
    \node[anchor=center] (D2) at (0, 1.5) {};

\fill[gray!10] (A2.center) -- (B2.center) -- (C2.center) -- (D2.center) -- (A2.center);
\begin{scope}
    \draw[very thick] (A2.center) -- (D2.center);
    \draw[very thick] (A2.center)++(0.25,0) -- (0.25, 1.5);
    \clip (A2.center) -- (B2.center) -- (C2.center) -- (D2.center) -- (A2.center);
    \draw[thick] (A2.center) --+ (2, 0.5);
    \draw[thick] (A2.center)++(0,0.25) --+ (2, 0.5);
    \draw[thick] (A2.center)++(0,0.125) --+ (2, 0.5);
    \draw[thick] (A2.center)++(0,0.375) --+ (2, 0.5);    
    
    \draw[] (A2.center) --+ (2, 1.5);
    \draw[] (A2.center)++(-0.5, 0) --+ (2, 1.5);    
    \draw[] (A2.center)++(-0.333, 0) --+ (2, 1.5);    
    \draw[] (A2.center)++(-0.166, 0) --+ (2, 1.5);
    \draw[] (A2.center)++(0.333, 0) --+ (2, 1.5);
    \draw[] (A2.center)++(0.166, 0) --+ (2, 1.5);
\end{scope}

    \node[anchor=center] (A1) at (0, 0) {};
    \node[anchor=center] (B1) at (0.5, 0) {};
    \node[anchor=center] (C1) at (0.5, 0.5) {};
    \node[anchor=center] (D1) at (0, 0.5) {};
    
\fill[gray!10] (A1.center) -- (B1.center) -- (C1.center) -- (D1.center) -- (A1.center);

    \draw[very thick] (D1.center) -- (A1.center);
    \draw[] (C1.center) -- (A1.center);
    \draw[thick] (A1.center) -- (B1.center);
\node[rotate=-90, scale=2] (arrow) at (0.25, 0.75) {$\twoheadrightarrow$};
\end{tikzpicture}
\quad
\begin{tikzpicture}[x=5em,y=4.5em]
\node[anchor=center] (O) at (0,-1) {$U$};
\node[anchor=center] (H2) at (-1.5,0) {$H_2^U$};
\node[anchor=center] (H1x) at (-0.5,0) {$H_{1, x}^U$};
\node[anchor=center] (H1y) at (0.5,0) {$H_{1, y}^U$};
\node[anchor=center] (H3) at (1.5,0) {$H_3^U$};
\node[anchor=center] (P) at (-2,1.25) {$P$};
\node[anchor=center] (Q) at (-1.5,1.25) {$Q$};
\node[anchor=center] (R) at (-1,1.25) {$R$};
\node[anchor=center] (S) at (-.5,1.25) {$S$};
\node[anchor=center] (P') at (.5,1.25) {$P'$};
\node[anchor=center] (Q') at (1,1.25) {$Q'$};
\node[anchor=center] (R') at (1.5,1.25) {$R'$};
\node[anchor=center] (S') at (2,1.25) {$S'$};

\draw (H2.north) -- (P.south);
\draw (H2.north) -- (Q.south);
\draw (H2.north) -- (R.south);
\draw (H2.north) -- (S.south);
\draw (H2.north) -- (P'.south);
\draw (H2.north) -- (Q'.south);
\draw (H2.north) -- (R'.south);
\draw (H2.north) -- (S'.south);

\draw (H3.north) -- (P.south);
\draw (H3.north) -- (Q.south);
\draw (H3.north) -- (R.south);
\draw (H3.north) -- (S.south);
\draw (H3.north) -- (P'.south);
\draw (H3.north) -- (Q'.south);
\draw (H3.north) -- (R'.south);
\draw (H3.north) -- (S'.south);

\draw (H1x.north) -- (P.south);
\draw (H1x.north) -- (Q.south);
\draw (H1x.north) -- (R.south);
\draw (H1x.north) -- (S.south);

\draw (H1y.north) -- (P'.south);
\draw (H1y.north) -- (Q'.south);
\draw (H1y.north) -- (R'.south);
\draw (H1y.north) -- (S'.south);

\draw (O.north) -- (H1x.south);
\draw (O.north) -- (H1y.south);
\draw (O.north) -- (H2.south);
\draw (O.north) -- (H3.south);

\node (bottom) at (0,-2) {};
\end{tikzpicture}

\caption{The M\"obius function has value $1$ at the bottom element and $-1$, resp. $2$ at each element of rank $1$, resp.~$2$. Thus the characteristic polynomial of the poset is $q(t)=t^2 - 4t + 16$ and so the Poincaré polynomial of the complement is 
$t^2q(-\frac{t+1}{t})=21t^2 + 6t + 1$ (see, e.g., \cite[Corollary 5.12]{Moc12})  
}\label{fig3}
\end{figure}
}
In particular, the Poincar\'e polynomial for the cohomology of $M(\nonso)$ can be computed using the characteristic polynomial of  the poset $\lay({\nonso})$ (see caption of \Cref{fig3}) obtaining:
    \[
        P_{\nonso}(t) = 1 + 6 t + 21 t^2.
    \]
    \noindent
    If $H^\bul(M(\nonso), \Z)$ were to be generated in degree $1$, the cup product $\cup : H^1(M(\nonso), \Z) \wedge H^1(M(\nonso), \Z) \to H^2(M(\nonso), \Z)$ were to be surjective, implying that the rank of $H^2(M(\nonso), \Z)$ should be at most $\binom{\rk H^1(M(\nonso), \Z)}{2}$. However from the Poincar\'e polynomial one can read that the ranks of $H^1(M(\nonso), \Z)$ and $H^2(M(\nonso), \Z)$ are respectively $6$ and $21 > 15 = {6\choose 2}$, so $H^\bul(M(\nonso), \Z)$ cannot possibly be generated only by elements of degree $1$.
\end{proof}

\begin{corollary}\label{cor:n3p2}
    The pro-$2$ completion of the fundamental group $G(\B_3^e)$ is not Bloch-Kato.
\end{corollary}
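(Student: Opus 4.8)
The plan is to mirror the proof of \Cref{th:arrangement.group.non.bk}, the only difference being that for $n=3$, $p=2$ no primitive $2$-cover is available: indeed $\phi_2$ is surjective here, since the three defining characters $(1,0),(0,1),(1,1)$ of $\B_3^e$ reduce mod $2$ to three distinct lines and $\lvert\mathbb P^1(\F_2^2)\rvert = 3 = \lvert\B_3^e\rvert$, so \Cref{prop:action.on.cohomology} and \Cref{th:arrangement.group.non.bk} do not apply directly. The workaround is to feed the argument the explicit (non-primitive) cover of \Cref{lem:explicit.cover} instead. Throughout I would write $\wt\B := (\B_3^e)^U$ for the lifted arrangement and $f\colon U\to T$ for the associated covering, whose degree $\lvert\det A\rvert$ is a power of $2$.

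First I would record that $\wt\B$ is supersolvable: this is exactly what is observed in \Cref{exlift} (and follows from \Cref{BUss}, since $\lay(\B_3^e)$ is supersolvable), even though $\wt\B$ fails to be \emph{strictly} supersolvable. Hence $M(\wt\B)$ is a $K(G(\wt\B),1)$ space and \Cref{prop:cohom.comparison} applies to $\wt\B$, yielding an isomorphism $H^\bul(G(\wt\B)_{\hat 2},\F_2)\cong H^\bul(M(\wt\B),\F_2)$ of graded $\F_2$-algebras.

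Next I would transfer the non-generation statement of \Cref{lem:explicit.cover} from $\Z$- to $\F_2$-coefficients. Since $\wt\B$ is supersolvable, $H^\bul(M(\wt\B),\Z)$ is torsion-free -- this is precisely the fact invoked in the proof of \Cref{prop:cohom.comparison} -- so the $\F_2$-Betti numbers coincide with the integral ones read off from the Poincaré polynomial $P_{\wt\B}(t)=1+6t+21t^2$. Thus $\dim_{\F_2} H^1(M(\wt\B),\F_2)=6$ and $\dim_{\F_2}H^2(M(\wt\B),\F_2)=21>15=\binom{6}{2}$, so the cup product $H^1\wedge H^1\to H^2$ cannot be surjective and $H^\bul(M(\wt\B),\F_2)$ is not generated in degree one; in particular it is not quadratic. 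Combined with the isomorphism above, $H^\bul(G(\wt\B)_{\hat 2},\F_2)$ is not quadratic.

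Finally I would run the subgroup argument as in \Cref{th:arrangement.group.non.bk}. The group $K:=G(\wt\B)$ is the finite-index subgroup of $G(\B_3^e)$ determined by $f$, and its index $\lvert\det A\rvert$ is a power of $2$. Since $G(\B_3^e)$ is residually $2$ (being an iterated almost-direct product of free groups), the results of \cite{RZ10} cited there identify the closure $\rho_{G(\B_3^e),2}(K)$ with $K_{\hat 2}=G(\wt\B)_{\hat 2}$, realized as a closed subgroup of $G(\B_3^e)_{\hat 2}$ with non-quadratic $\F_2$-cohomology; hence $G(\B_3^e)_{\hat 2}$ is not Bloch-Kato. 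I expect the only genuine point to watch is the departure from \Cref{th:arrangement.group.non.bk}: because $f$ is not primitive with respect to $\B_3^e$ (the hypersurface $H_1^U$ splits into two components, see \Cref{exlift}), the clean group-action machinery of \Cref{prop:action.on.cohomology} is unavailable, and the failure of degree-one generation must instead be certified by the direct numerical computation of \Cref{lem:explicit.cover}. One should check that nothing else in the chain relied on primitivity, only on the $2$-power degree of the cover and on supersolvability of $\wt\B$, both of which persist.
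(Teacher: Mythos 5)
Your proposal is correct and follows essentially the same route as the paper's own proof: pass to the explicit degree-$8$ cover of \Cref{lem:explicit.cover}, use supersolvability of $(\tbraid{3}^e)^U$ together with \Cref{prop:cohom.comparison} to identify $H^\bul(G((\tbraid{3}^e)^U)_{\hat 2},\F_2)$ with $H^\bul(M((\tbraid{3}^e)^U),\F_2)$, and use residual $2$-ness of $G(\tbraid{3}^e)$ to realize $G((\tbraid{3}^e)^U)_{\hat 2}$ as a closed subgroup of $G(\tbraid{3}^e)_{\hat 2}$ with non-quadratic cohomology. Your explicit transfer of the degree-one-generation failure from $\Z$ to $\F_2$ via torsion-freeness is a point the paper leaves implicit, and is a welcome (if minor) elaboration rather than a departure.
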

\begin{proof}
    Recall that, since $G(\tbraid{3}^e)$ is an iterated semidirect product of free groups, it is residually-$2$. Moreover the subgroup $H \cong G((\tbraid{3}^e)^U)$ has index $|\det(A)| = 8$ in $G(\tbraid{3}^e)$, so its image in the pro-$2$ completion $G(\tbraid{3}^e)_{\hat 2}$ is $\rho_{2, G(\tbraid{3}^e)}(H) = H_{\hat 2}$ (see \cite[Lemma 3.1.5 (a)]{RZ10}). 
    By \Cref{prop:cohom.comparison}, the cohomology of $H_{\hat 2}$ with $\F_2$ coefficients is isomorphic to the cohomology of $H \cong G((\tbraid{3}^e)^U)$, which is not $1$-generated by \Cref{lem:explicit.cover}. We conclude that $G(\tbraid{3}^e)_{\hat 2}$ is not Bloch-Kato.
\end{proof}
\noindent
Putting together the results of this section, we have the following.

\begin{theorem}\label{th:braid.bk}
    For all prime numbers $p$ and all $n > 2$:
    \begin{enumerate}
    \item the pro-$p$ completion of the pure braid group $(\pbraid{n+1})_{\hat p}$ is not Bloch-Kato;
    \item the pro-$p$ completion of the pure mapping class group $(\mcg{n+2})_{\hat p}$ is not Bloch-Kato.
    \end{enumerate}
    Moreover for all primes $p$, the pro-$p$ completion of the pure braid groups $\pbraid{k}$ for $1\leq k \leq 3$ as well as of $\mcg{k}$ for $1\leq k\leq 4$ is Bloch-Kato.
\end{theorem}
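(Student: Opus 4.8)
The plan is to treat the non-Bloch-Kato claims (1)--(2) separately from the Bloch-Kato ``moreover'' part, since the two rely on quite different tools. For the non-Bloch-Kato direction the key is that both families of groups reduce to the essentialized braid arrangement $\tbraid{n}^e$, which by \Cref{ex2,ex2.1,PBKpi1} is essential, primitive and supersolvable. First I would record that $G(\tbraid{n}^e)_{\hat p}$ is not Bloch-Kato for every $n>2$ and every prime $p$: for $n>3$, and for $n=3$ with $p>2$, this is \Cref{rem:largenp} fed into \Cref{th:arrangement.group.non.bk}, while the single remaining pair $(n,p)=(3,2)$ is exactly \Cref{cor:n3p2}. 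Claim (2) then follows immediately, since \Cref{rem:mcg} gives $\mcg{n+2}\cong G(\tbraid{n}^e)$ for $n\geq 3$, so $(\mcg{n+2})_{\hat p}\cong G(\tbraid{n}^e)_{\hat p}$ is not Bloch-Kato. For claim (1), \Cref{lem:essentialization}.(3) (with $m=1$ for the braid arrangement) together with \Cref{ex3} yields $\pbraid{n+1}\cong G(\tbraid{n}^e)\times\Z$, hence $(\pbraid{n+1})_{\hat p}\cong G(\tbraid{n}^e)_{\hat p}\times\Z_p$; since $G(\tbraid{n}^e)_{\hat p}$ embeds as a closed direct factor and already fails to be Bloch-Kato, so does the product.

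For the Bloch-Kato direction I would first pin down the groups explicitly. For $1\leq k\leq 3$ the pure braid groups are $\pbraid 1=1$, $\pbraid 2\cong\Z$ and $\pbraid 3\cong F_2\times\Z$, while for $1\leq k\leq 4$ the pure mapping class groups are trivial for $k\leq 3$ and $\mcg 4\cong F_2$, all classical facts (see \cite{Farb}). After passing to pro-$p$ completions it thus suffices to show that the trivial group, $\Z_p$, the free pro-$p$ group $(F_2)_{\hat p}$, and the product $(F_2)_{\hat p}\times\Z_p$ are Bloch-Kato. The first three are standard: the trivial group and $\Z_p$ have $\F_p$-cohomology concentrated in degrees $\leq 1$, and every closed subgroup of a free pro-$p$ group is again free pro-$p$ by the pro-$p$ Nielsen--Schreier theorem \cite{RZ10}, with cohomology again concentrated in degrees $\leq 1$ and therefore quadratic.

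The substantive point is the product $(F_2)_{\hat p}\times\Z_p$, which I would handle by classifying its closed subgroups directly. Writing $Z=1\times\Z_p$ for the (central) second factor and $\pi$ for the projection to $(F_2)_{\hat p}$, any closed subgroup $H$ meets $Z$ in a closed subgroup of $\Z_p$, which is either trivial or isomorphic to $\Z_p$. If $H\cap Z=1$, then $\pi|_H$ is a continuous injection of profinite groups, hence an isomorphism onto the closed subgroup $\pi(H)\leq(F_2)_{\hat p}$, so $H$ is free pro-$p$. If $H\cap Z\cong\Z_p$, then $1\to\Z_p\to H\to\pi(H)\to 1$ is a central extension with free pro-$p$ quotient; by projectivity of free pro-$p$ groups it splits, and centrality forces $H\cong\pi(H)\times\Z_p$. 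Hence every closed subgroup is either free pro-$p$ or of the form $F\times\Z_p$ with $F$ free pro-$p$. The former has quadratic cohomology as above; for the latter the Künneth isomorphism gives $H^\bullet(F\times\Z_p,\F_p)\cong H^\bullet(F,\F_p)\otimes H^\bullet(\Z_p,\F_p)$, a graded tensor product of quadratic algebras and so again quadratic. Thus $(F_2)_{\hat p}\times\Z_p$ is Bloch-Kato.

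I expect this subgroup classification for $(F_2)_{\hat p}\times\Z_p$ to be the only genuinely delicate step: one must rule out ``diagonal'' closed subgroups that are neither free nor split off the central $\Z_p$, and this is exactly what the splitting of the central extension (via projectivity of the free pro-$p$ quotient) guarantees. Everything else is either a direct appeal to the arrangement-theoretic results of the earlier sections or a standard fact about $\Z_p$ and free pro-$p$ groups.
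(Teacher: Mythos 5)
Your proposal is correct, and for claims (1) and (2) it follows the paper's proof step for step: non-Bloch-Kato-ness of $G(\ess{\tbraid{n}})_{\hat p}$ via \Cref{rem:largenp} (fed into \Cref{th:arrangement.group.non.bk}) together with \Cref{cor:n3p2} for the exceptional pair $(n,p)=(3,2)$; claim (2) via the isomorphism of \Cref{rem:mcg}; and claim (1) via $(\pbraid{n+1})_{\hat p}\cong G(\ess{\tbraid{n}})_{\hat p}\times \Z_p$, transporting the closed subgroup with non-quadratic cohomology into the product. The genuine divergence is in the ``moreover'' part. You identify the small groups exactly as the paper does ($\pbraid{3}\cong F_2\times\Z$ via Fadell--Neuwirth, $\mcg{4}\cong F_2$, the rest trivial or $\Z$), but where the paper disposes of the key case by citing \cite{MPQN21} for the Bloch-Kato property of $(F_2\times\Z)_{\hat p}$, you prove it directly by classifying the closed subgroups of $(F_2)_{\hat p}\times\Z_p$. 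Your classification is sound: $H\cap Z$ is trivial or isomorphic to $\Z_p$; in the first case a continuous injection of profinite groups is an isomorphism onto its compact (hence closed) image, which is free pro-$p$ by the pro-$p$ Nielsen--Schreier theorem; in the second, projectivity of the free pro-$p$ quotient $\pi(H)$ splits the extension, and centrality of $H\cap Z$ upgrades the semidirect product to a direct one, giving $H\cong F\times\Z_p$. Two points are worth making explicit if you write this up: the K\"unneth isomorphism applies because $H^\bul(\Z_p,\F_p)$ is finite-dimensional in each degree (one-dimensional in degrees $0$ and $1$, zero above, since $\operatorname{cd}_p\Z_p=1$), and the quadraticity of $H^\bul(F,\F_p)\otimes H^\bul(\Z_p,\F_p)$ uses that a graded tensor product of quadratic algebras is presented by the two sets of quadratic relations together with the degree-two (super)commutators, hence is again quadratic. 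What your route buys is self-containedness --- an explicit description of every closed subgroup of $(\pbraid{3})_{\hat p}$ in place of an external citation --- and it transparently handles $(\mcg{4})_{\hat p}$ as well, where the paper's phrasing ``$\mcg{4}_{\hat p}\cong F_2$'' is a slip (it is $\mcg{4}\cong F_2$, so the pro-$p$ completion is free pro-$p$) and implicitly relies on the same closed-subgroup facts you spell out.
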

\begin{proof}
By \Cref{rem:largenp} and \Cref{cor:n3p2} we know that
for all $n > 2$ and for all primes $p$, the pro-$p$ completion of the fundamental group of the essentialized arrangement $\B_n^e$ is not Bloch-Kato. Moreover, from \Cref{lem:essentialization}.(3) we know that the fundamental group of $\B_n$, which equals $\pbraid{n+1}$ by \Cref{ex3}, can be expressed as $G(\B_n^e) \times \Z$. When we take the pro-$p$ completion we obtain 
\[
    (\pbraid{n+1})_{\hat p} \cong G(\B_n^e)_{\hat p} \times \Z_p
\] 
where $\Z_p$ is the pro-$p$ completion of $\Z$, that is the group of $p$-adic integers. 

For claim (i) notice that, since $G(\B_n^e)_{\hat p}$ is not Bloch-Kato, there exists a closed subgroup $K \leq G(\B_n^e)_{\hat p}$ with non-quadratic $\F_p$-cohomology algebra. Such a subgroup $K \leq G(\B_n^e)_{\hat p} \times \Z_p$ 
witnesses the failure of the Bloch-Kato property of $(\pbraid{n+1})_{\hat p}$.
Claim (ii) follows immediately from the isomorphism  $\pi_1(M(\ess{\B_n}))\simeq \mcg{n+2}$ discussed in \Cref{rem:mcg}. 

Finally we study the remaining cases. The groups $\pbraid{1}$ and $\mcg{k}$ for $k=1,2,3$ are trivial %
\cite[p.~101, Section 4.2.4]{Farb}
and so there is nothing to show. 
Moreover, $\pbraid{2} \cong \Z$ and so its pro-$p$ completion is Bloch-Kato for all primes $p$. The Fadell-Neuwirth split exact sequence $1\to F_2 \to \pbraid{3}\to \pbraid{2} \to 1$ implies that $\pbraid{3} \cong F_2 \times \Z$, whose pro-$p$ completion is Bloch-Kato for all primes $p$ (see \cite[Definition 4.9]{MPQN21} and references therein). Finally, again by \cite[p.~101, Section 4.2.4]{Farb} we have  $\mcg{4}_{\hat p} \cong F_2$, and the claim follows.  

\end{proof}

\bibliographystyle{plain}
\bibliography{refs}

\end{document}